%
%
%
%
%
\RequirePackage{fix-cm}
\documentclass[smallextended,numbook]{svjour3}       
\smartqed  
\usepackage{graphicx}
%
%

\usepackage{a4wide}
\usepackage{amsmath}
\usepackage{amssymb}
\usepackage{color}
\usepackage{url}


\newcommand{\N}{\mathbb{N}}

\newcommand{\R}{\mathbb{R}}

\newcommand{\dom}{{\mathrm{dom}}}
\newcommand{\norm}[1]{\left\Vert #1 \right\Vert}

\renewcommand{\H}{{\cal H}}
\newcommand{\V}{{\cal V}}

\newcommand{\A}{{\cal A}}

\renewcommand{\div}{\mathrm{div}\,}

\newcommand{\argmin}{\mathrm{arg}\min}
\newcommand{\st}{\,:\,}

\newcommand{\dx}{\,\mathrm{d}x}
\renewcommand{\d}{\,\mathrm{d}}

\newcommand{\calN}{\mathcal{N}}

\newcommand{\tex}{{T_\mathrm{ex}}}
\newcommand{\sg}{\zeta}
\newcommand{\change}[1]{\textcolor{red}{#1}}
\renewcommand{\change}[1]{#1}
\newcommand{\secchange}[1]{\textcolor{red}{#1}}
\renewcommand{\secchange}[1]{#1}

\journalname{Journal of Evolution Equations}
\begin{document}

\title{Asymptotic Profiles of Nonlinear Homogeneous Evolution Equations of Gradient Flow Type}


\author{Leon Bungert         \and
        Martin Burger 
}


\institute{L. Bungert \and M. Burger \at
              Department Mathematik, Universit\"{a}t Erlangen-N\"{u}rnberg, \\
              Cauerstrasse 11, 91058 Erlangen, Germany. \\
              \email{\{leon.bungert,martin.burger\}@fau.de}           
}

\date{Received: date / Accepted: date}

\maketitle

\begin{abstract}
This work is concerned with the gradient flow of absolutely $p$-homogeneous convex functionals on a Hilbert space, which we show to exhibit finite ($p<2$) or infinite extinction time ($p \geq 2$). We give upper bounds for the finite extinction time and establish \change{sharp} convergence rates of the flow. Moreover, we study next order asymptotics and prove that asymptotic profiles of the solution are eigenfunctions of the subdifferential operator of the functional. To this end, we compare with solutions of an ordinary differential equation which describes the evolution of eigenfunction under the flow. Our work applies, for instance, to local and nonlocal versions of PDEs like $p$-Laplacian evolution equations, the porous medium equation, and fast diffusion equations, herewith generalizing many results from the literature to an abstract setting.

We also demonstrate how our theory extends to general homogeneous evolution equations which are not necessarily a gradient flow. Here we discover an interesting integrability condition which characterizes whether or not asymptotic profiles are eigenfunctions.

\keywords{Gradient flow \and Homogeneous functionals \and Nonlinear evolution equations \and Asymptotic profile \and Extinction profile \and Nonlinear eigenfunctions \and Asymptotic behavior \and Extinction time \and Convergence rates}
\subclass{35K90 \and 35P30 \and 47J10 \and 47J35}
\end{abstract}

\section{Introduction}
This work studies the fine asymptotic behavior of the abstract gradient flow
\begin{align}\label{gradflow}\tag{GF}
\begin{cases}
\partial_t u+\partial J(u)\ni 0,\\
u(0)=f.
\end{cases}
\end{align}
Here $J:\H\to\R\cup\{\infty\}$ is an absolutely $p$-homogeneous convex functional on a real Hilbert space~$\H$, and $f\in\H$ is an initial datum (see Section~\ref{sec:setup} for precise definitions).

There is a variety of partial differential equations to which our theory applies and for some of which similar issues have been studied before in a specialized setting. The most prominent and most studied equations are the parabolic $p$-Laplacian equations for $p\geq 1$ with the total variation flow as special case for $p=1$
$$\partial_tu-\div\left(|\nabla u|^{p-2}\nabla u\right)=0,\quad p\geq 1.$$
These equations can also be studied as a fourth order gradient flow in $H^{-1}$, i.e.,
$$ \partial_tu-\Delta\left[\div\left(|\nabla u|^{p-2}\nabla u\right)\right]=0,\quad p\geq 1. $$
Another class of examples are the fast diffusion equations for $1<p<2$, the linear heat equation for $p=2$, and the porous medium equation for $p>2$, i.e.$$
\partial_tu-\Delta u^{p-1}=0,\quad p>1,
$$
which, complemented with suitable boundary conditions, can also be interpreted as Hilbert space gradient flows (cf.~\cite{littig2015porous} for the porous medium / fast diffusion case). Furthermore, as long as homogeneity is preserved, our general model covers \emph{non-local} versions of the equations above, as well. Remarkably, we can also address an eigenvalue problem similar to that of the $\infty$-Laplacian operator \cite{juutinen1999eigenvalue,portilheiro2013degenerate} with our framework. To this end we set $J(u)=\norm{\nabla u}_\infty$ for $u\in W^{1,\infty}\cap L^2$ and $J(u)=\infty$ else, which meets all our assumptions under sufficient regularity of the domain.

The main objective of this work is to prove that asymptotic profiles of the gradient flow \eqref{gradflow} are eigenfunctions of the subdifferential operator $\partial J$. By an asymptotic profile we refer to a suitably rescaled version of the actual solution $u(t)$ of the gradient flow. More precisely, we look for a rescaling $a(t)$ such that $u(t)/a(t)$ converges to some $w_*$ as $t$ tends to the extinction time of the flow (respectively $t \rightarrow \infty$). Here $w_*$ is an eigenfunction of $\partial J$, meaning that $\lambda w_*\in\partial J(w_*)$ for some $\lambda\in\R$ and by ``extinction time'' we refer to the (finite or infinite) time where the solution of the gradient flow stops changing, meaning $\partial_t u(t)=0$ (respectively the minimal time such that that $J(u(t)) =0$).

The rescaling is chosen in such a way that it amplifies the shape of $u(t)$ immediately before it reaches the state of lowest energy as described by the functional $J$. Furthermore, it should be noted that eigenfunctions of $\partial J$ are \emph{self-similar} in the sense that they only shrink under the gradient flow \eqref{gradflow} without changing their shape. 

If the energy is a quadratic form associated to an linear operator with compact and self-adjoint inverse---like it is the case for the Dirichlet energy and the Laplace operator, for instance---elementary spectral theory provides an explicit separated-variable solution of the gradient flow \eqref{gradflow} in terms of the eigenfunctions of the operator. This is already sufficient to accurately describe the asymptotics and study asymptotic profiles. 

If, however, $\partial J$ is a non-linear and potentially multi-valued operator, the situation becomes more challenging since there is typically no basis of eigenfunctions available. Still there is a vast amount of literature dealing with the asymptotic behavior of certain partial differential equations. \secchange{The first results which relate asymptotic profiles of a gradient flow to the eigenvalue problem $\lambda u \in\partial J(u)$ were given in \cite{andreu2002some,andreu2004parabolic} for the case of total variation flow.} Other works deal with $p$-Laplacian equations \cite{kamin1988fundamental,andreu2008nonlocal,portilheiro2013degenerate,vazquez2018asymptotic}, porous medium equations \cite{vazquez2004dirichlet,stan2018porous}, fast diffusion equations \cite{berryman1980stability,bonforte2012behaviour,bonforte2019sharp}, and other PDEs \cite{giga2010nonlinear,blanc2019evolution}, the list far from being exhaustive. A common property of solutions to all this equations appears to be that asymptotically they behave like eigenfunctions of the associated operator. Another observation is that some of the equations above have a finite extinction time whereas others do not. 

Remarkably few efforts have been taken in the literature to transfer above-noted observations to a general PDE like \eqref{gradflow} without specifying the energy $J$ explicitly. In \cite{bungert2019nonlinear} the absolutely 1-homogeneous case was treated in full generality and, for a subclass of functionals, explicit characterizations and relations of asymptotic profiles and the extinction time were proven. Apart from this, the authors were only able to find one more reference \cite{ghidaglia1991exact} which takes a similar route. Here, $J$ is assumed to be {locally sub-homogeneous} of degree $p\geq 2$ and the author proves exact convergence rates to a minimizer of the energy and that asymptotic profiles are eigenfunctions. While this contribution is already very general and captures a wide range of homogeneity degrees $p$, it still lacks important regime $1\leq p<2$ which applies inter alia to $p$-Laplace equations including the total variation flow and fast diffusion equations. Furthermore, the relation between asymptotic profiles and so-called \emph{ground states}, i.e., eigenfunctions with minimal eigenvalue which are especially important in applications like graph clustering (see \cite{buhler2009spectral,bungert2019computing}, for instance), is not illuminated. 

For a general study of the asymptotic behavior of the abstract gradient flow \eqref{gradflow}, only few assumptions are needed. \change{Assuming a \emph{coercivity condition} on $J$ suffices to prove upper rates of convergence, as already shown in \cite{hauer2017kurdyka}, which provides a very general picture on decay rates of gradient flows in metric spaces. In applications, this coercivity is typically provided through a functional inequality of Poincar\'{e}, Sobolev, or H\"{o}lder type. To show that these rates of convergence are actually sharp, one needs to use the \emph{homogeneity} of~$J$.} To ensure that the resulting asymptotic profiles are indeed eigenfunctions, one needs a \emph{compactness} condition on the domain of $J$ which is, however, not very restrictive in most scenarios and seems to be unavoidable, judging from the other existing approaches to prove similar statement. 

In our outlook section on evolution equations of general homogeneous operators we give a condition for the existence of asymptotic profiles which can be used to address more complex PDEs like for example the doubly nonlinear equation 
$$\partial_tu-\div(|u|^{m-1}|\nabla u|^{p-2}\nabla u)=0$$
or the evolutionary Monge-Amp\`{e}re equation
$$\partial_t u-\det(D^2u)=0$$
for which existence of asymptotic profiles has already been established by other techniques in \cite{savare1994asymptotic} and \cite{sanchez2018asymptotic}, respectively. See also \cite{stan2013asymptotic} for a doubly nonlinear equation related to the $p$-Laplacian operator and \cite{le2017eigenvalue} for the Monge-Amp\`{e}re eigenvalue problem.

\subsection{Main Contributions and Outline}

These are the \emph{main contributions} of our work: we characterize finite and infinite extinction times of \eqref{gradflow} and prove exact rates of convergence of solutions by means of a thorough dissipation analysis. We identify a nonlinear Rayleigh quotient as dissipation rate which allows us to study second-order dissipation. Those convergence rates we use to define rescaled solutions which we prove to be eigenfunctions of the associated subdifferential operator. Finally, we give a sneak peek to the asymptotic behavior of semi-groups generated by a general homogeneous operator. 

Furthermore, we would like to point out that our theory answers an open problem raised in \cite{vazquez2016dirichlet} regarding the existence of asymptotic profiles of the fractional $p$-Laplacian evolution equation for $1<p<2$. The statements in Section~\ref{sec:finite_time_ext} fully apply to this equation since it has a gradient flow structure and, hence, asymptotic profiles are eigenfunctions of the fractional $p$-Laplacian operator if enough compactness is provided by the initial conditions (cf.~Example~\ref{ex:existence} below).
\\

The plan of this paper is as follows: we give a very concise overview of the necessary definitions and properties related to the absolutely $p$-homogeneous functionals, their subdifferential, and the associated gradient flow in Section~\ref{sec:setup}. For the interested reader, more detailed statements are collected in the Appendix. Section~\ref{sec:main_sec} is the main part of this work and starts with the study of the gradient flow if the datum $f$ is already an eigenfunction in Section~\ref{sec:evolution_of_eigenfcts}. We address nonlinear eigenvalues and the connection to coercivity in Section~\ref{sec:eigenvalues}. The main ingredients of our analysis are prepared in Sections~\ref{sec:dissipation} and~\ref{sec:times_and_rates} where we study the dissipation of the flow, extinction times, and convergence rates. Sections~\ref{sec:finite_time_ext} and~\ref{sec:infinite_time_ext}, respectively, use these results to prove convergence to eigenfunctions in the cases of finite and infinite extinction time. In Section~\ref{sec:criteria} we study general conditions which assure existence and uniqueness of asymptotic profiles. In Section~\ref{sec:operator} we give a concise outlook on how our theory carries over to general semi-groups of homogeneous operators and we point out limitations. We conclude with some open problems and future working directions.

\subsection{Setup and Assumptions}
\label{sec:setup}

We consider the gradient flow \eqref{gradflow} where $f\in\H$ is an initial datum and $J:\H\to\R\cup\{\infty\}$ is a convex, lower semi-continuous, and proper functional with dense domain, which is absolutely $p$-homogeneous with respect to some $p\geq1$, meaning that 
\begin{equation}\label{eq:p-hom_J}
\begin{split}
J(c u)&=|c|^pJ(u),\quad\forall c\neq 0,\;u\in\H, \\ 
J(0)&=0.
\end{split} 
\end{equation}
Furthermore,
\begin{align}\label{def:subdiff}
\partial J(u)=\left\lbrace \sg\in\H\st J(u)+\langle\sg,v-u\rangle\leq J(v),\;\forall v\in\H\right\rbrace
\end{align}
denotes the (potentially set-valued) subdifferential of $J$ and can be seen as set-valued nonlinear operator $\partial J:\H\rightrightarrows\H$ or as a relation in $\H\times\H$. 

From the definition of the subdifferential \eqref{def:subdiff} and the homogeneity \eqref{eq:p-hom_J} of $J$ it is easy to see that $\partial J$ is $(p-1)$-homogeneous in the sense that
\begin{align}\label{eq:p-1-hom_subdiff}
\partial J(c u)=c|c|^{p-2}\partial J(u),\quad\forall u\in\H,\, c\neq 0.
\end{align}
Furthermore, it is well-known and straightforward to prove \cite{yang2008generalized} that 
\begin{align}\label{eq:euler}
\langle \sg,u\rangle=pJ(u)
\end{align}
holds for all $\sg\in\partial J(u)$ which is a generalization of Euler's homogeneous function theorem to a non-smooth and convex setting. 

Existence and uniqueness of solutions to the abstact evolution equation \eqref{gradflow} follow from Brezis' theory of maximally monotone evolution equations \cite{brezis1973ope}, see Theorem~\ref{thm:brezis}. In the appendix we have collected several important relations that we make use of when calculating with solutions of \eqref{gradflow}.

To increase readability we assume throughout this work that the datum $f$ is orthogonal to the null-space of the functional $J$, which we denoted by $f\in\calN(J)^\perp$. As shown in the appendix, this does not restrict generality since the null-space component of $f$ is invariant under the gradient flow. The main consequence of this assumption is that the solution of the gradient flow converges to zero as $t\to\infty$ instead of converging to its null-space component. For instance, in the case of $J(u)=\int_\Omega|\nabla u|^p\dx$ this means that we restrict ourselves to data with zero mean, i.e., $\int_\Omega f\d x=0$. Furthermore, to avoid triviality we assume $f\neq 0$. Altogether we use the compact notation $f\in\H_0$ where 
\begin{align}
\H_0:=\calN(J)^\perp\setminus\{0\}\subset\H
\end{align}
denotes the set of initial conditions. Again we stress that demanding $f\in\H_0$ does not at all restrict generality as shown in the appendix.

Finally, we define the extinction time of the gradient flow \eqref{gradflow} as
\begin{align}\label{def:ext_time}
\tex:=\tex(f):=\inf\left\lbrace T>0\st u(t)=0\;\forall t\geq T,\,u(t)\text{ solves }\eqref{gradflow} \right\rbrace\in(0,\infty].
\end{align}

\section{Analysis of the Asymptotic Behavior}
\label{sec:main_sec}

In the following we characterize the asymptotic behaviour of gradient flows for $p$-homogeneous functionals. We start by studying the behaviour of nonlinear eigenfunctions during the evolution and then proceed to provide some general relations on dissipation, Rayleigh quotients, and eigenvalues. Combining those ideas with further estimates allows us to characterize extinction times and asymptotic profiles, with some differences for $p<2$ and $p\geq 2$ that we work out in detail.

\subsection{Evolution of Eigenfunctions}
\label{sec:evolution_of_eigenfcts}
In this section we will study the evolution of eigenfunctions of the operator $\partial J$ under the gradient flow \eqref{gradflow}. This will provide us with enough intuition to investigate the asymptotic behavior of general initial data. 

Let us therefore assume that the datum $f$ is already an eigenfunction, i.e., $\lambda f\in\partial J(f)$ for some $\lambda>0$. For such data the unique solution of \eqref{gradflow} has separated variables \cite{cohen2018shape} and is given by $u(t)=a(t)_+f$ for $t\geq0$ where $a(t)$ solves the initial value problem
\begin{align}\label{eq:characteristic_ODE}
a'(t)=-\lambda a(t)^{p-1},\quad a(0)=1,\tag{ODE$_\lambda$}
\end{align}
being parametrized by $\lambda>0$ and the subscript $+$ denotes the positive part of a function. This equation is analytically solvable with 
\begin{align}\label{eq:solution_ODE}
a(t)=
\begin{cases}
(1-(2-p)\lambda t)^\frac{1}{2-p},\quad&p\neq2,\\
\exp(-\lambda t),\quad&p=2.
\end{cases}
\end{align}
and hence one obtains
\begin{align}\label{eq:u_eigenvector}
u(t)=
\begin{cases}
\max\{1-(2-p)\lambda t,0\}^\frac{1}{2-p}f,\quad&p\neq2,\\
\exp(-\lambda t)f,\quad&p=2.
\end{cases}
\end{align}
From these expressions one can draw two conclusions: firstly, for $p\in[1,2)$ one has a finite extinction time, being given by 
\begin{align}\label{eq:extinction_time}
\tex=\frac{1}{(2-p)\lambda},
\end{align}
whereas for $p\geq2$ this is not the case, a behavior which is well-known for the heat equation, for instance. Moreover, in any case a suitable rescaling is given by $a(t)$ since this leads to $u(t)/a(t)\equiv f$ for all $0\leq t<\tex$ which is an eigenfunction.

The main purpose of this work is to show that these two central properties indeed remain true if the eigenfunction $f$ is replaced by some general data. This involves showing that the norms of solutions of the gradient flow behave exactly as $a(t)$. The main ingredient of our analysis will be the following dissipation lemma which we state already at this point since it applies to both the case of finite and infinite extinction time.

\begin{lemma}[Dissipation]\label{lem:dissipation}
Let $u(t)$ solve the gradient flow \eqref{gradflow} with $f\in\H_0$ and let $a(t)$ solve \eqref{eq:characteristic_ODE} with $\lambda>0$. Letting $w(t):=u(t)/a(t)$ it holds for almost all $t\in(0,\tex)$ 
\begin{align}
&\frac{\d}{\d t}J(w(t))+\frac{1}{a(t)^{2-p}}\norm{\frac{\sg(t)}{a(t)^{p-1}}-\lambda w(t)}^2=\lambda\frac{\d}{\d t}\frac{1}{2}\norm{w(t)}^2,\label{eq:diss_p-hom}\\
\lambda&\frac{\d}{\d t}\frac{1}{2}\norm{w(t)}^2=\frac{\lambda}{a(t)^2}\left[\lambda\frac{\norm{u(t)}^2}{a(t)^{2-p}}-pJ(u(t))\right].\label{eq:dnormw}
\end{align}
\end{lemma}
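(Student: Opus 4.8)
The plan is to derive both identities by a direct computation resting on only three facts already at our disposal: the ODE $a'=-\lambda a^{p-1}$ satisfied by the rescaling, the $(p-1)$-homogeneity \eqref{eq:p-1-hom_subdiff} of the subdifferential, and Euler's identity \eqref{eq:euler}. Throughout one works on the relatively open subset of $(0,\tex)$ where $a(t)>0$, so that $w(t)=u(t)/a(t)$ is well defined. Since $u$ is a strong solution of \eqref{gradflow}, the curve $t\mapsto u(t)$ is locally Lipschitz with $u'(t)=-\sg(t)$ for a.e. $t$, where $\sg(t)\in\partial J(u(t))$ denotes the minimal section, and consequently $w$ is locally Lipschitz as well.

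First I would record the evolution equation for $w$. Differentiating $w=u/a$ and substituting $u'=-\sg$ and $a'=-\lambda a^{p-1}$ gives, for a.e. $t$,
\[
w'(t)=-\frac{\sg(t)}{a(t)}+\lambda\,a(t)^{p-2}\,w(t).
\]
Identity \eqref{eq:dnormw} is then immediate: by the product rule $\lambda\frac{\d}{\d t}\frac12\norm{w}^2=\lambda\langle w,w'\rangle$, and inserting the formula for $w'$ produces one term $\lambda^2 a^{p-2}\norm{w}^2=\lambda^2\norm{u}^2/a^{4-p}$ and one term $-\frac{\lambda}{a}\langle\sg,w\rangle=-\frac{\lambda}{a^2}\langle\sg,u\rangle$; applying Euler's identity $\langle\sg,u\rangle=pJ(u)$ rearranges the sum into exactly $\frac{\lambda}{a^2}\bigl[\lambda\norm{u}^2/a^{2-p}-pJ(u)\bigr]$.

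For \eqref{eq:diss_p-hom} the only non-algebraic step is to differentiate $t\mapsto J(w(t))$. By the $(p-1)$-homogeneity \eqref{eq:p-1-hom_subdiff}, $\partial J(w)=\partial J(u/a)=a^{1-p}\,\partial J(u)$ for $a>0$, hence $\sg(t)/a(t)^{p-1}\in\partial J(w(t))$. Invoking the chain rule for convex functionals along absolutely continuous curves (in the form recalled in the Appendix), $\frac{\d}{\d t}J(w(t))=\langle\eta,w'(t)\rangle$ for every $\eta\in\partial J(w(t))$; choosing $\eta=\sg/a^{p-1}$ and plugging in the expression for $w'$ yields $\frac{\d}{\d t}J(w)=-\norm{\sg}^2/a^{p}+\frac{\lambda}{a}\langle\sg,w\rangle$, where again $\langle\sg,w\rangle=pJ(u)/a$ by Euler. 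It then remains only to expand the square and multiply by $a^{p-2}$,
\[
\frac{1}{a^{2-p}}\norm{\frac{\sg}{a^{p-1}}-\lambda w}^2=\frac{\norm{\sg}^2}{a^{p}}-\frac{2\lambda\langle\sg,w\rangle}{a}+\lambda^2 a^{p-2}\norm{w}^2,
\]
and to compare the three expressions: $\frac{\d}{\d t}J(w)+\tfrac{1}{a^{2-p}}\norm{\sg/a^{p-1}-\lambda w}^2$ collapses term by term onto $\lambda\frac{\d}{\d t}\frac12\norm{w}^2$, so \eqref{eq:diss_p-hom} holds as a pointwise identity.

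The main obstacle is therefore not the computation, which is just careful bookkeeping of powers of $a(t)$ and of the inner products, but the analytic justification that $t\mapsto J(w(t))$ is absolutely continuous on compact subintervals and that its derivative equals the pairing of $w'$ with any subgradient of $w$. This rests on $w$ being locally Lipschitz with $w(t)\in\dom\partial J$ for a.e. $t$ — a property inherited from the strong solution $u$ via the homogeneity \eqref{eq:p-1-hom_subdiff} — together with the Brezis-type differentiation lemma collected in the Appendix. Once the formula for $w'$ and the membership $\sg/a^{p-1}\in\partial J(w)$ are established, the rest is elementary.
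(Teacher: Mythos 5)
Your computation is correct and is precisely the ``simple differentiation calculus'' the paper leaves to the reader: the formula for $w'$, the membership $\sg(t)/a(t)^{p-1}\in\partial J(w(t))$ obtained from \eqref{eq:p-1-hom_subdiff}, Euler's identity \eqref{eq:euler}, and the expansion of the square all check out and collapse onto $\lambda\frac{\d}{\d t}\frac12\norm{w}^2$ exactly as claimed. One cosmetic remark: the general chain rule you invoke for $t\mapsto J(w(t))$ along the Lipschitz curve $w$ is not literally the statement recalled in the Appendix (which only gives \eqref{eq:derivative_J} for $u$ itself), but you can sidestep it entirely by writing $J(w(t))=a(t)^{-p}J(u(t))$ via the $p$-homogeneity and differentiating with the product rule, \eqref{eq:characteristic_ODE}, and \eqref{eq:derivative_J}, which yields the same expression $-\norm{\sg}^2/a^{p}+p\lambda J(u)/a^{2}=-\norm{\sg}^2/a^{p}+\tfrac{\lambda}{a}\langle\sg,w\rangle$.
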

\begin{proof}
The proof follows from simple differentiation calculus and using \eqref{eq:characteristic_ODE}. 
\qed 
\end{proof}

Next, we shortly explain the relation between the existence of a positive minimal eigenvalue of $\partial J$ and coercivity of $J$ which will be prerequisite for the rest of our analysis.

\subsection{Nonlinear Eigenvalues and Coercivity}
\label{sec:eigenvalues}
Throughout this work we assume that $J$ is coercive in the sense that 
\begin{align}\label{ineq:coercivity}
\norm{u}^p\leq CJ(u),\;\forall u\in\H_0,
\end{align}
where $C>0$ is a constant. It is obvious that this inequality is equivalent to positive lower bound of the Rayleigh quotient associated with $J$, i.e.,
\begin{align}\label{eq:first_eigenvalue}
\lambda_1:=\inf_{u\in\H_0}\frac{pJ(u)}{\norm{u}^p}>0,
\end{align}
and that the optimal constant in \eqref{ineq:coercivity} is given by $C=p/\lambda_1$. Minimizers of the nonlinear Rayleigh quotient 
\begin{align}\label{def:rayleigh}
R(u):=\frac{pJ(u)}{\norm{u}^p}
\end{align}
over $\H_0$ are referred to as \emph{ground states}, in the sequel, and are only determined up to multiplication with a scalar. 

Remember that we have called $u\in\H$ an eigenfunction if $\lambda u\in\partial J(u)$ for some number~$\lambda$. However, the value $\lambda$ does not have a meaningful interpretation as eigenvalue since according to \eqref{eq:p-1-hom_subdiff} the inclusion $\lambda u\in\partial J(u)$ is invariant under multiplication with a scalar \emph{only if} $p=2$. Hence, we introduce a scale-invariant definition of nonlinear eigenvalues which takes the homogeneity of $J$ into account.

\begin{definition}[Nonlinear \change{eigenfunctions and their eigenvalues}]\label{def:rigorous_eigenfunctions}
We say that $u\in\H\setminus\{0\}$ is an eigenfunction of $\partial J$ with eigenvalue $\lambda\in\R$ if 
\begin{align}\label{eq:scale-inv_egenvalues}
\lambda \norm{u}^{p-2}u\in\partial J(u).
\end{align}
\end{definition}

\secchange{%
\begin{remark}[Other definitions of nonlinear eigenfunctions]
Note that in the literature there are also other definitions of nonlinear eigenfunctions, which are often related to the doubly-nonlinear eigenvalue problem
\begin{align}\label{eq:doubly_nonlinear_eigen}
\lambda q\in\partial J(u),\; q\in\partial H(u),
\end{align}
where $H$ is another convex functional. For instance, if $J(u)=\int_\Omega|\nabla u|^p\dx$ and $H(u)=\int_\Omega|u|^p\dx$ for $1\leq p<\infty$, this yields the standard definition of $p$-Laplacian eigenfunctions
$$\lambda|u|^{p-2}u=-\Delta_p u,$$
where $|.|$ denotes the absolute value and $\Delta_p$ is the $p$-Laplacian. Note that this differs from our definition of eigenfunctions given in \eqref{eq:scale-inv_egenvalues}, which arises by choosing $H(u)=\frac1p \norm{u}^p$ in \eqref{eq:doubly_nonlinear_eigen} with a Hilbert norm $\norm{\cdot}$. However, in the context of asymptotic profiles of gradient flows, Definition~\ref{def:rigorous_eigenfunctions} is the right choice, as our results show.
\end{remark}}

Having this definition at hand, we can connect eigenfunctions to critical points of the Rayleigh quotient in \eqref{eq:first_eigenvalue}. The statement is somewhat standard so we omit the proof.

\begin{proposition}[Critical points of the Rayleigh quotient]\label{prop:critical_rayleigh}
It holds that $u\in\H\setminus\{0\}$ is an eigenfunction with eigenvalue $\lambda$ if and only if $u$ is a critical point of the Rayleigh quotient \eqref{def:rayleigh}. In this case the eigenvalue is given by $\lambda=R(u)$. In particular, ground states are eigenfunctions with eigenvalue $\lambda_1$ defined in~\eqref{eq:first_eigenvalue}.
\end{proposition}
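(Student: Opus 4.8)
The plan is to prove both directions of the equivalence by directly unwinding the definition of the subdifferential together with the Euler identity \eqref{eq:euler}. Recall that a critical point of $R$ over $\H_0$ is a point $u$ where the Fr\'echet (or, since $J$ may be nonsmooth, the appropriate directional/subdifferential) derivative of $R$ vanishes; because $R$ is a ratio of a $p$-homogeneous convex functional and the $p$-homogeneous norm term, the chain rule gives that $u\in\H_0$ is a critical point of $R$ if and only if
\begin{align*}
0\in\partial\left(\frac{pJ(u)}{\norm{u}^p}\right)
=\frac{p}{\norm{u}^p}\partial J(u)-\frac{p^2 J(u)}{\norm{u}^{p+2}}u,
\end{align*}
which rearranges to $\dfrac{p\,J(u)}{\norm{u}^{p}}\,u\in\partial J(u)$, i.e.\ $R(u)\norm{u}^{p-2}u\in\partial J(u)$. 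This is precisely \eqref{eq:scale-inv_egenvalues} with $\lambda=R(u)$, giving one direction and simultaneously identifying the eigenvalue.

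For the converse I would start from the eigenfunction relation $\lambda\norm{u}^{p-2}u\in\partial J(u)$ and apply the Euler identity \eqref{eq:euler} with $\sg=\lambda\norm{u}^{p-2}u$, which yields $\langle\lambda\norm{u}^{p-2}u,u\rangle=pJ(u)$, i.e.\ $\lambda\norm{u}^{p}=pJ(u)$, so that necessarily $\lambda=pJ(u)/\norm{u}^p=R(u)$. Plugging this back shows $R(u)\norm{u}^{p-2}u\in\partial J(u)$, which by the computation above means $0$ lies in the subdifferential of $R$ at $u$, i.e.\ $u$ is a critical point. The final sentence of the proposition is then immediate: a ground state is by definition a minimizer of $R$ over $\H_0$, hence a critical point, hence an eigenfunction, and its eigenvalue equals $R(u)=\lambda_1$ by \eqref{eq:first_eigenvalue}.

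The one point requiring a little care — and the place I would expect the ``standard but not entirely trivial'' subtlety to sit — is justifying the quotient/chain rule for $\partial R$ when $J$ is nonsmooth, since one must argue that the subdifferential of the ratio decomposes as claimed (the denominator $\norm{u}^p$ is smooth and strictly positive on $\H_0$, so this is essentially the scalar-multiple-and-sum rule for subdifferentials, but it deserves a one-line citation or remark). An alternative that avoids differentiating $R$ altogether is to define ``critical point'' directly through the variational inequality characterizing stationarity of $R$ and to verify the equivalence pointwise using \eqref{eq:euler} and the convexity inequality \eqref{def:subdiff}; either route is short, which is why the authors omit the details.
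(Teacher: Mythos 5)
The paper gives no proof of this proposition (it is explicitly omitted as ``somewhat standard''), so there is nothing to compare against line by line; judged on its own, your argument is essentially correct and both directions, together with the identification $\lambda=R(u)$ via the Euler identity \eqref{eq:euler}, are the intended ones. The one place where the write-up as literally stated is not yet rigorous is exactly the point you flag: the display $0\in\partial\bigl(pJ(u)/\norm{u}^p\bigr)=\frac{p}{\norm{u}^p}\partial J(u)-\frac{p^2J(u)}{\norm{u}^{p+2}}u$ uses the convex-subdifferential symbol on $R$, which is not convex, and appealing instead to Clarke calculus is delicate here because $\dom(J)$ may have empty interior in $\H$ (e.g.\ $W^{1,p}\subset L^2$), so $R$ need not be locally Lipschitz and the quotient rule is not automatic. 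The clean fix is the route you mention in your last sentence, and it is worth making it the definition of criticality rather than an ``alternative'': for $v\in\H$ and $t\searrow 0$, convexity of $J$ gives $J(u+t(v-u))\le(1-t)J(u)+tJ(v)$ while $\norm{u+t(v-u)}^p=\norm{u}^p+tp\norm{u}^{p-2}\langle u,v-u\rangle+o(t)$; inserting these into $R(u+t(v-u))\ge R(u)$ (stationarity/minimality), cancelling $pJ(u)=R(u)\norm{u}^p$, dividing by $t$ and letting $t\to0$ yields $J(v)\ge J(u)+R(u)\norm{u}^{p-2}\langle u,v-u\rangle$, i.e.\ $R(u)\norm{u}^{p-2}u\in\partial J(u)$ directly from \eqref{def:subdiff}. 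This also closes the small remaining gap in your last sentence, since ``ground state $\Rightarrow$ critical point'' then needs no generalized-gradient machinery: the minimality inequality is used as is. The converse direction as you wrote it (Euler identity forces $\lambda=R(u)$, and the resulting subgradient inequality is precisely first-order stationarity of $R$) is fine.
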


Let us collect some more properties of nonlinear eigenfunctions and their eigenvalues which are similar to the well-known linear case. 

\begin{proposition}\label{prop:eigenvalues}
Let $u\in\H\setminus\{0\}$ be an eigenfunction of $\partial J$ with eigenvalue $\lambda>0$ in the sense of Definition~\ref{def:rigorous_eigenfunctions}. Then it holds
\begin{enumerate}
\item $u\in\H_0$ and $\lambda\geq\lambda_1$,
\item $cu$ is an eigenfunction with eigenvalue $\lambda$ for all $c\in\R$.
\end{enumerate}
\end{proposition}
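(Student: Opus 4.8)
The plan is to prove both claims directly from Definition~\ref{def:rigorous_eigenfunctions} together with the homogeneity identities \eqref{eq:p-1-hom_subdiff} and \eqref{eq:euler} and the coercivity assumption \eqref{ineq:coercivity}.

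\textbf{Part (1).} Suppose $\lambda\norm{u}^{p-2}u\in\partial J(u)$ with $\lambda>0$. First I would show $u\in\calN(J)^\perp$. The null-space $\calN(J)$ is a linear subspace on which $J$ vanishes, and by convexity and nonnegativity of $J$ one checks that every element of $\calN(J)$ lies in $\partial J(0)$, hence (again by convexity) $\partial J(v)\subset\calN(J)^\perp$ for every $v$ — equivalently, using \eqref{eq:euler} on the functional $J$ restricted along null directions. Actually the cleanest route: for any $n\in\calN(J)$, apply the subgradient inequality \eqref{def:subdiff} at $v=u\pm tn$ and use $p$-homogeneity to expand $J(u\pm tn)$; since $J(u+tn)\le J$ evaluated suitably and letting $t\to 0$ one forces $\langle\lambda\norm{u}^{p-2}u,n\rangle=0$, i.e. $\langle u,n\rangle=0$ because $\lambda\norm{u}^{p-2}\neq 0$ (note $u\neq 0$, else the eigenvalue relation is vacuous or gives $0\in\partial J(0)$ with the wrong normalization). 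Alternatively one may simply invoke the appendix fact that the null-space component is invariant under the flow and that eigenfunctions are stationary profiles; I would state whichever the appendix makes available. Thus $u\in\H_0$. Then by Proposition~\ref{prop:critical_rayleigh}, $\lambda=R(u)=pJ(u)/\norm{u}^p\ge\lambda_1$ by the very definition of $\lambda_1$ in \eqref{eq:first_eigenvalue} as the infimum of $R$ over $\H_0$.

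\textbf{Part (2).} For $c\neq 0$, start from $\lambda\norm{u}^{p-2}u\in\partial J(u)$ and apply the scaling relation \eqref{eq:p-1-hom_subdiff}: $\partial J(cu)=c|c|^{p-2}\partial J(u)$, so $c|c|^{p-2}\lambda\norm{u}^{p-2}u\in\partial J(cu)$. Now rewrite the left-hand side in terms of $cu$: since $\norm{cu}^{p-2}=|c|^{p-2}\norm{u}^{p-2}$, we have $c|c|^{p-2}\norm{u}^{p-2}u=|c|^{p-2}\norm{u}^{p-2}(cu)=\norm{cu}^{p-2}(cu)$, hence $\lambda\norm{cu}^{p-2}(cu)\in\partial J(cu)$, which is exactly the statement that $cu$ is an eigenfunction with the \emph{same} eigenvalue $\lambda$. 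The case $c=0$ is excluded since eigenfunctions are required to be nonzero (one cannot and need not claim $0$ is an eigenfunction); I would phrase (2) as holding for all $c\neq 0$, or note the trivial degeneracy at $c=0$.

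\textbf{Main obstacle.} The only genuinely non-routine point is establishing $u\in\calN(J)^\perp$ in Part (1); everything else is a one-line homogeneity computation. If the appendix already records that $\partial J(u)\subset\calN(J)^\perp$ for all $u$ (which is standard for $p$-homogeneous $J$ with $\calN(J)$ its null-space), this step collapses immediately: from $\lambda\norm{u}^{p-2}u\in\partial J(u)\subset\calN(J)^\perp$ and $\lambda\norm{u}^{p-2}\neq 0$ we read off $u\in\calN(J)^\perp$. I would cite that appendix statement to keep the proof short, and otherwise supply the two-line perturbation argument sketched above.
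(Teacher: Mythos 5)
Your proof is correct and takes essentially the same route as the paper: Part (1) rests on the appendix fact $\partial J(u)\subset\calN(J)^\perp$ (Proposition~\ref{prop:orth}) combined with the Euler identity $\lambda\norm{u}^p=pJ(u)$ (equivalently the Rayleigh-quotient characterization), and Part (2) is precisely the scaling computation the paper summarizes as scale-invariance of \eqref{eq:scale-inv_egenvalues} via \eqref{eq:p-1-hom_subdiff}. One aside in your Part (1) is false as stated — a nonzero $n\in\calN(J)$ does \emph{not} lie in $\partial J(0)$, since testing the subgradient inequality against $v=tn$ with $t>0$ gives $t\norm{n}^2\le J(tn)=0$ — but you discard that route, and the argument you actually commit to (citing the orthogonality result from the appendix) is exactly the paper's.
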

\begin{proof}
Ad 1.: Proposition~\ref{prop:orth} implies $u\in\H_0$. According to \eqref{eq:euler} and \eqref{eq:scale-inv_egenvalues} it holds $\lambda\norm{u}^p=pJ(u)$ for eigenfunctions $u$, which implies the statement. Ad 2.: Due to \eqref{eq:p-1-hom_subdiff} the inclusion \eqref{eq:scale-inv_egenvalues} is invariant under multiplication with scalars. 
\qed 
\end{proof}

Obviously, it is possible to compute the eigenvalue of elements $u\in\H$ which meet $\lambda u\in\partial J(u)$ for some $\lambda$ by simple rescaling.

\begin{corollary}\label{cor:rescaling_eigenvalues}
Any $u\in\H_0$ that meets ${\lambda} u\in\partial J(u)$ for some ${\lambda}\geq 0$ is an eigenfunction with eigenvalue $\tilde{\lambda}:={\lambda}/\norm{u}^{p-2}$ and vice versa.
\end{corollary}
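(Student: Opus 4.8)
The plan is to recognize that this corollary is a pure relabeling of the eigenvalue, made possible because $u\in\H_0$ forces $\norm{u}>0$. First I would take $u\in\H_0$ with $\lambda u\in\partial J(u)$ for some $\lambda\geq0$. Since $u\neq0$, the number $\tilde\lambda:=\lambda/\norm{u}^{p-2}$ is well-defined and nonnegative, and by its very definition $\tilde\lambda\norm{u}^{p-2}u=\lambda u\in\partial J(u)$. This is exactly condition \eqref{eq:scale-inv_egenvalues}, so $u$ is an eigenfunction of $\partial J$ with eigenvalue $\tilde\lambda$ in the sense of Definition~\ref{def:rigorous_eigenfunctions}.

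For the converse I would simply run this computation backwards: if $u$ is an eigenfunction with eigenvalue $\tilde\lambda$, then $\tilde\lambda\norm{u}^{p-2}u\in\partial J(u)$ holds by definition, so setting $\lambda:=\tilde\lambda\norm{u}^{p-2}\geq0$ gives $\lambda u\in\partial J(u)$, which is the asserted form. To reconcile this with Proposition~\ref{prop:critical_rayleigh} I would additionally pair $\lambda u\in\partial J(u)$ with $u$ and use Euler's identity \eqref{eq:euler}, obtaining $\lambda\norm{u}^2=\langle\lambda u,u\rangle=pJ(u)$ and hence $\tilde\lambda=\lambda/\norm{u}^{p-2}=pJ(u)/\norm{u}^p=R(u)$; in particular the eigenvalue attached to $u$ by the rescaling is unique.

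There is no real obstacle in this argument; the only point requiring a word is that dividing by $\norm{u}^{p-2}$ is legitimate, which is exactly what $f\in\H_0$ (hence $u\neq0$) guarantees, and in the borderline case $p=2$ one has $\norm{u}^{p-2}=1$ so that $\tilde\lambda=\lambda$ as expected. Alternatively one could deduce the claim from the $(p-1)$-homogeneity \eqref{eq:p-1-hom_subdiff} of $\partial J$ applied to $v:=u/\norm{u}$, but the direct relabeling above is the most economical route.
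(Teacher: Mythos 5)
Your proof is correct and is exactly the ``simple rescaling'' the paper has in mind (the corollary is stated without proof there, following immediately from Definition~\ref{def:rigorous_eigenfunctions} since $\tilde\lambda\norm{u}^{p-2}u=\lambda u$). The extra observations --- that $u\in\H_0$ guarantees $\norm{u}>0$ and that Euler's identity pins down $\tilde\lambda=R(u)$ --- are accurate and consistent with Propositions~\ref{prop:critical_rayleigh} and~\ref{prop:eigenvalues}.
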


\subsection{Dissipation Analysis}
\label{sec:dissipation}

While it is generally true that solutions of the gradient flow of a convex functional approach an energy minimum monotonously, in the $p$-homogeneous case the identity \eqref{eq:diss_u}, which states that $\norm{u(t)}^2$ decreases with rate proportional to $J(u(t))$, sharpens this statement. However, in \cite{ghidaglia1991exact,varvaruca2004exact} it has been observed that one should rather consider the dissipation of $\norm{u(t)}^{p-2}$ instead which turns out to be proportional to the Rayleigh quotient \eqref{def:rayleigh} evaluated along the gradient flow.  

The fundamental importance of coercivity (or equivalently a positive lower bound for the Rayleigh quotient) for us lies in the fact that it ensures that $\norm{u(t)}$ decreases \emph{sufficiently fast} and gives rise to an upper rate. In contrast, by studying the dissipation of the dissipation, i.e., the derivative of the Rayleigh quotient, one can infer that this rate is actually \emph{sharp}, meaning that the flow does not extinct too quickly. For a compact notation we define the map
\begin{align}\label{eq:Lambda}
t\mapsto\Lambda(t):=R(u(t))=\frac{pJ(u(t))}{\norm{u(t)}^p}
\end{align}
for $0<t<\tex$ where $u(t)$ is the solution of the gradient flow \eqref{gradflow} with $f\in\H_0$.
\begin{remark}
It is obvious that $\Lambda$ is well-defined for all $0<t<\tex$. Furthermore, from Theorem~\ref{thm:brezis} together with the identity \eqref{eq:diss_u} one can infer that it is locally Lipschitz continuous and hence differentiable for almost all $t\in(0,\tex)$.
\end{remark}

An important property of the map $\Lambda$ is that it quantifies the dissipation of the flow, as the following Proposition states.

\begin{proposition}\label{prop:diss_u_Lambda}
Let $u(t)$ solve \eqref{gradflow} with $f\in\H_0$. Then it holds for all $0<t<\tex$ that
\begin{align}\label{eq:diss_u_Lambda}
\begin{cases}
\frac{1}{2-p}\frac{\d}{\d t}\norm{u(t)}^{2-p}=-\Lambda(t),\quad&p\neq 2,\\[10pt]
\phantom{\frac{1}{2-p}}\frac{\d}{\d t}\norm{u(t)}^{2\phantom{-p}}=-2\Lambda(t)\norm{u(t)}^2,\quad&p=2.
\end{cases}
\end{align}
\end{proposition}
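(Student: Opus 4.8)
The plan is to differentiate $\norm{u(t)}^{2-p}$ (respectively $\norm{u(t)}^2$ when $p=2$) directly, using the chain rule together with the basic dissipation identity for the gradient flow — that is, the identity labelled \eqref{eq:diss_u} in the appendix, which expresses $\frac{\d}{\d t}\frac12\norm{u(t)}^2 = -\langle\sg(t),u(t)\rangle$ for $\sg(t)\in\partial J(u(t))$ the minimal section selected by the flow, hence $\frac{\d}{\d t}\frac12\norm{u(t)}^2 = -pJ(u(t))$ by Euler's identity \eqref{eq:euler}. The point is that the awkward factor $\norm{u(t)}^{p-2}$ produced by the chain rule cancels exactly against part of the numerator, leaving the Rayleigh quotient $\Lambda(t)$.

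Concretely, for $p\neq 2$ and $0<t<\tex$ (where $\norm{u(t)}>0$, since $u(t)=0$ only at the extinction time), I would write
\begin{align*}
\frac{1}{2-p}\frac{\d}{\d t}\norm{u(t)}^{2-p}
= \frac{1}{2-p}\cdot\frac{2-p}{2}\norm{u(t)}^{-p}\frac{\d}{\d t}\norm{u(t)}^2
= \frac{1}{2}\norm{u(t)}^{-p}\left(-2pJ(u(t))\right)
= -\frac{pJ(u(t))}{\norm{u(t)}^p} = -\Lambda(t).
\end{align*}
For $p=2$ the computation is even shorter: $\frac{\d}{\d t}\norm{u(t)}^2 = -2pJ(u(t)) = -2\cdot 2J(u(t)) = -2\Lambda(t)\norm{u(t)}^2$, using that $\Lambda(t)=2J(u(t))/\norm{u(t)}^2$ when $p=2$.

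The only genuine subtlety — and the step I would be most careful about — is the regularity needed to justify differentiating $t\mapsto\norm{u(t)}^{2-p}$ and applying the chain rule. This is not automatic when $p>2$, since then $s\mapsto s^{2-p}$ blows up as $s\to 0$; however, on any compact subinterval of $(0,\tex)$ one has $\norm{u(t)}$ bounded away from $0$ (the flow has not yet extinguished), so $s\mapsto s^{2-p}$ is smooth on the relevant range, and $t\mapsto\norm{u(t)}^2$ is locally Lipschitz with the a.e.\ derivative recalled in the remark preceding the Proposition (this is exactly the content of Theorem~\ref{thm:brezis} combined with \eqref{eq:diss_u}). Hence the composition is locally Lipschitz, differentiable a.e., and the chain rule for the derivative of the composition of a $C^1$ function with a locally Lipschitz function applies at almost every $t$. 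Once this regularity bookkeeping is in place, the identities above hold for a.e.\ $t\in(0,\tex)$, and since both sides are continuous in $t$ (the left-hand side being the a.e.\ derivative of an absolutely continuous function which one checks is itself continuous via $\Lambda$), they hold for all $0<t<\tex$, as claimed. I would phrase the proof to lean on the appendix identity \eqref{eq:diss_u} and Euler's identity \eqref{eq:euler}, keeping the actual computation to the two lines above.
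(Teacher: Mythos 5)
Your proposal is correct and follows essentially the same route as the paper: apply the chain rule to $\norm{u(t)}^{2-p}$ and substitute the dissipation identity \eqref{eq:diss_u} together with the definition of $\Lambda$ in \eqref{eq:Lambda}. The extra regularity discussion is sound but not needed beyond what Theorem~\ref{thm:brezis} already provides, since the derivatives in question are understood as right-derivatives existing for every $t>0$.
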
 
\begin{proof}
Using \eqref{eq:diss_u} and the definition of $\Lambda$ in \eqref{eq:Lambda} we can compute
$$\frac{\d }{\d t}\frac{1}{2}\norm{u(t)}^2=-pJ(u(t))=-\Lambda(t)\norm{u(t)}^p.$$
In the case $p=2$ this already proves \eqref{eq:diss_u_Lambda}. For $p>2$ we can use this equality and calculate
$$\frac{1}{2-p}\frac{\d}{\d t}\norm{u(t)}^{2-p}=\norm{u(t)}^{1-p}\frac{\d}{\d t}\norm{u(t)}=\norm{u(t)}^{-p}\frac{\d}{\d t}\frac{1}{2}\norm{u(t)}^2=-\Lambda(t)$$
which concludes the proof also in this case.
\qed 
\end{proof}

Crucial for obtaining lower bounds for the convergence of $u(t)$ to zero will be that $t\mapsto\Lambda(t)$ is non-increasing. Furthermore, its critical points correspond to eigenfunctions.

\begin{proposition}[Evolution of the Rayleigh quotient]\label{prop:decrease_rayleigh}
Let $u(t)$ solve \eqref{gradflow} with $f\in\H_0$ and let $\Lambda$ be given by \eqref{eq:Lambda}. Then it holds $\Lambda'(t)\leq 0$ for almost all $t\in(0,\tex)$. Furthermore, $\Lambda'(t)=0$ if and only if $u(t)$ is an eigenfunction with eigenvalue $\Lambda(t)$ and $\Lambda$ is differentiable at $t$. 
\end{proposition}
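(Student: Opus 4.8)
The plan is to differentiate $\Lambda(t)=pJ(u(t))/\norm{u(t)}^p$ directly and to recognise the outcome as minus a Cauchy--Schwarz defect, which makes both the sign and the equality case transparent. Write $\sg(t):=-\partial_t u(t)$ for the section of $\partial J(u(t))$ selected by the flow, which by Br\'{e}zis' theory (Theorem~\ref{thm:brezis} and the relations collected in the appendix) is the minimal-norm element $\partial^0 J(u(t))$. From those same relations, on $(0,\tex)$ the maps $t\mapsto\norm{u(t)}^2$ and $t\mapsto J(u(t))$ are locally Lipschitz with
\[
\frac{\d}{\d t}\frac{1}{2}\norm{u(t)}^2=-pJ(u(t)),\qquad
\frac{\d}{\d t}J(u(t))=\langle\sg(t),\partial_t u(t)\rangle=-\norm{\sg(t)}^2
\]
for a.e.\ $t$, the first being \eqref{eq:diss_u}; in particular $\Lambda$ is locally Lipschitz, as already noted after \eqref{eq:Lambda}, hence differentiable a.e.

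First I would fix a $t$ at which all three maps are differentiable. Since $\frac{\d}{\d t}\norm{u}^p=\frac{p}{2}\norm{u}^{p-2}\frac{\d}{\d t}\norm{u}^2=-p^2J(u)\norm{u}^{p-2}$, the quotient rule gives
\[
\Lambda'(t)=\frac{-p\norm{\sg(t)}^2}{\norm{u(t)}^p}+\frac{p^3J(u(t))^2}{\norm{u(t)}^{p+2}}.
\]
Now I would use Euler's identity \eqref{eq:euler} in the form $pJ(u(t))=\langle\sg(t),u(t)\rangle$ to replace $p^2J(u(t))^2$ by $\langle\sg(t),u(t)\rangle^2$, which collapses the right-hand side to
\[
\Lambda'(t)=-\frac{p}{\norm{u(t)}^p}\left(\norm{\sg(t)}^2-\frac{\langle\sg(t),u(t)\rangle^2}{\norm{u(t)}^2}\right).
\]
The bracket equals $\norm{\sg(t)}^2$ minus the squared length of the orthogonal projection of $\sg(t)$ onto $u(t)$, hence is nonnegative by Cauchy--Schwarz, so $\Lambda'(t)\le 0$. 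One recognises here the component of $\sg(t)$ orthogonal to $u(t)$, which is exactly the gradient of the Rayleigh quotient underlying Proposition~\ref{prop:critical_rayleigh}.

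For the equality statement, at a point of differentiability $\Lambda'(t)=0$ forces equality in Cauchy--Schwarz, i.e.\ $\sg(t)=\mu\,u(t)$ for some $\mu\in\R$; since $\sg(t)\in\partial J(u(t))$ this yields $\mu u(t)\in\partial J(u(t))$, and \eqref{eq:euler} gives $\mu\norm{u(t)}^2=pJ(u(t))\ge 0$, so Corollary~\ref{cor:rescaling_eigenvalues} identifies $u(t)$ as an eigenfunction with eigenvalue $\mu/\norm{u(t)}^{p-2}=pJ(u(t))/\norm{u(t)}^p=\Lambda(t)$. Conversely, if $u(t)$ is an eigenfunction with eigenvalue $\Lambda(t)$, then $\Lambda(t)\norm{u(t)}^{p-2}u(t)\in\partial J(u(t))$; since every $\eta\in\partial J(u(t))$ satisfies $\langle\eta,u(t)\rangle=pJ(u(t))=\Lambda(t)\norm{u(t)}^p$ by \eqref{eq:euler}, Cauchy--Schwarz gives $\norm{\eta}\ge\Lambda(t)\norm{u(t)}^{p-1}$ with equality only for $\eta$ parallel to $u(t)$, so $\Lambda(t)\norm{u(t)}^{p-2}u(t)$ is the minimal-norm section and $\sg(t)=\Lambda(t)\norm{u(t)}^{p-2}u(t)$ is parallel to $u(t)$; plugging this into the displayed formula for $\Lambda'(t)$ (valid wherever $\Lambda$ is differentiable) gives $\Lambda'(t)=0$.

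I expect the only delicate point to be the two differentiation identities above — that $t\mapsto J(u(t))$ is absolutely continuous with $\frac{\d}{\d t}J(u(t))=\langle\sg(t),\partial_t u(t)\rangle$ and that the flow realises the minimal section $\sg(t)=\partial^0 J(u(t))$. These are classical facts from the theory of gradient flows of convex lower semi-continuous functionals, recorded in the appendix, so in the proof itself it suffices to cite them; the remainder is just the quotient rule, Euler's identity~\eqref{eq:euler}, and the equality case of Cauchy--Schwarz.
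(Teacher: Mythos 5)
Your proposal is correct and follows essentially the same route as the paper: differentiate $\Lambda$ via the quotient rule using \eqref{eq:derivative_J} and \eqref{eq:diss_u}, invoke Euler's identity \eqref{eq:euler} to rewrite the result as minus a Cauchy--Schwarz defect, and settle the equality case by identifying $\sg(t)$ as the minimal-norm element of $\partial J(u(t))$. The only cosmetic difference is that in the converse direction you prove inline (via Euler plus Cauchy--Schwarz) that the eigenvector-direction subgradient is the minimal-norm selection, whereas the paper cites this fact from the literature.
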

\begin{proof}
Using the \eqref{eq:derivative_J} and the Euler identity \eqref{eq:euler} we can calculate the derivative of $\Lambda$ and find
\begin{align*}
\Lambda'(t)&=-p^2J(u(t))\norm{u(t)}^{-p-1}\frac{\d}{\d t}\norm{u(t)}-p\norm{u(t)}^{-p}\norm{\sg(t)}^2\\
&=-p^2J(u(t))\norm{u(t)}^{-p-2}\frac{\d}{\d t}\frac{1}{2}\norm{u(t)}^2-p\norm{u(t)}^{-p}\norm{\sg(t)}^2\\
&=p\norm{u(t)}^{-p}\left[\frac{\langle\sg(t),u(t)\rangle^2}{\norm{u(t)}^2}-\norm{\sg(t)}^2\right]\leq 0,
\end{align*}
where the inequality follows from Cauchy-Schwarz. Hence, $\Lambda$ is non-increasing. Furthermore, $\Lambda'(t)=0$ if and only if $\sg(t)=\lambda u(t)$ for some $\lambda\in\R$. Assuming this to hold, one obtains $\lambda u(t)=\sg(t)\in\partial J(u(t))$, and hence $u(t)$ is an eigenfunction and the eigenvalue is given by $R(u(t))=\Lambda(t)$ according to Proposition~\ref{prop:critical_rayleigh}. Conversely, assume that $u(t)$ is an eigenfunction, meaning that $\lambda u(t)\in\partial J(u(t))$ for some $\lambda>0$. Then it was shown\footnote{Strictly speaking, it was only shown in the absolutely 1-homogeneous case. However, the proof for general $p$ works precisely the same.} in \cite{bungert2019nonlinear} that $\lambda u(t)$ has minimal norm in the convex set $\partial J(u(t))$. One the other hand, Theorem~\ref{thm:brezis} states that also $\sg(t)$ has minimal norm in $\partial J(u(t))$. Since minimal norm elements in convex sets are unique, this implies $\lambda u(t)=\sg(t)$ and hence $\Lambda'(t)=0$ if the derivative exists.
\qed 
\end{proof}

\subsection{Extinction Time, Lower and Upper Bounds}
\label{sec:times_and_rates}
Having studied the dissipation of $t\mapsto\norm{u(t)}$ along the gradient flow, we are now able to investigate extinction times and, correspondingly, convergence rates of $u(t)$ as $t\to\tex$ in the Hilbert norm.

We start with proving \emph{infinite extinction time} in the case $p\geq 2$ and deriving lower bounds on the convergence.

\begin{theorem}[Infinite extinction time for $p\geq 2$]\label{thm:infinite_ext}
Let $u(t)$ solve \eqref{gradflow} with $f\in\H_0$ and $p\geq 2$. Then it holds $\tex=\infty$.
\end{theorem}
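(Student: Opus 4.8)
The plan is to use the dissipation identity from Proposition~\ref{prop:diss_u_Lambda} together with the monotonicity of the Rayleigh quotient from Proposition~\ref{prop:decrease_rayleigh} to show that $\norm{u(t)}$ cannot reach zero in finite time when $p\geq 2$. First I would deal with the case $p>2$: by Proposition~\ref{prop:diss_u_Lambda} we have $\frac{1}{2-p}\frac{\d}{\d t}\norm{u(t)}^{2-p}=-\Lambda(t)$, i.e.\ $\frac{\d}{\d t}\norm{u(t)}^{2-p}=(p-2)\Lambda(t)$. Since $\Lambda$ is non-increasing by Proposition~\ref{prop:decrease_rayleigh}, we have $\Lambda(t)\leq\Lambda(t_0)$ for any fixed $t_0\in(0,\tex)$ and all $t\geq t_0$. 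Integrating on $[t_0,t]$ gives $\norm{u(t)}^{2-p}\leq\norm{u(t_0)}^{2-p}+(p-2)\Lambda(t_0)(t-t_0)$. Because $p-2>0$, the exponent $2-p$ is negative, so $\norm{u(t)}^{2-p}\to\infty$ would be required for $\norm{u(t)}\to 0$; but the right-hand side is finite for every finite $t$. Hence $u(t)\neq 0$ for all $t<\infty$, which is exactly $\tex=\infty$.

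For the borderline case $p=2$, I would instead use the second line of \eqref{eq:diss_u_Lambda}, namely $\frac{\d}{\d t}\norm{u(t)}^2=-2\Lambda(t)\norm{u(t)}^2$. Again invoking $\Lambda(t)\leq\Lambda(t_0)=:M$ for $t\geq t_0$, a Gronwall-type comparison yields $\norm{u(t)}^2\geq\norm{u(t_0)}^2\exp(-2M(t-t_0))>0$ for all finite $t$, so once more the solution never vanishes and $\tex=\infty$. Since $f\in\H_0$ means $f\neq 0$, hence $\norm{u(t_0)}>0$ for $t_0$ small by continuity (Theorem~\ref{thm:brezis}), the argument is not vacuous.

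One should also record the trivial but necessary remark that $\Lambda(t_0)$ is finite and nonnegative: finiteness follows because $u(t_0)\in\dom J$ with $\norm{u(t_0)}>0$, and nonnegativity because $J\geq 0$ (as $J$ is $p$-homogeneous with $J(0)=0$ and convex, or simply from coercivity \eqref{ineq:coercivity}). In fact coercivity gives $\Lambda(t)\geq\lambda_1>0$, which is not needed here but is consistent.

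The main obstacle, such as it is, is purely a matter of care with the sign of the exponent $2-p$ and with the integration/comparison argument near $t=0$ (where $\Lambda$ may blow up if $f\notin\dom J$, but we only need a fixed interior starting point $t_0$, and by the smoothing property of the flow $u(t_0)\in\dom\partial J$ for $t_0>0$, so $\Lambda(t_0)<\infty$). There is no deep difficulty: the content is entirely contained in the already-established monotonicity of $\Lambda$ and the dissipation identity, and the conclusion is a one-line ODE comparison. The only thing to be vigilant about is not to confuse ``$\Lambda$ non-increasing'' with any lower bound; we genuinely only use the \emph{upper} bound $\Lambda(t)\leq\Lambda(t_0)$ to prevent over-fast extinction.
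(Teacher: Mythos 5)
Your proof is correct and follows essentially the same route as the paper: combine the dissipation identity of Proposition~\ref{prop:diss_u_Lambda} with the monotonicity of $\Lambda$ from Proposition~\ref{prop:decrease_rayleigh}, integrate from an interior time $t_0>0$ (so that $\Lambda(t_0)<\infty$), and conclude via the boundedness of $\norm{u(t)}^{2-p}$ for $p>2$ and a Gronwall lower bound for $p=2$. The paper phrases this as a contradiction with an assumed finite extinction time $T$, whereas you argue directly that $u(t)$ never vanishes; this is only a cosmetic difference.
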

\begin{proof}
The proof is very similar to \cite[Thm.~2.1]{ghidaglia1991exact}. Since the authors only deal with the case $p>2$ and $J(u)=\frac{1}{p}\int_\Omega|\nabla u|^p\d x$ there, we give the full proof of this results, for the matter of completeness.

Let us assume there is a finite and positive extinction time $T$, chosen minimally. Propositions~\ref{prop:diss_u_Lambda} and~\ref{prop:decrease_rayleigh} imply that for any $0<\delta\leq t<T$ it holds
\begin{align*}
\begin{cases}
\frac{1}{2-p}\frac{\d}{\d t}\norm{u(t)}^{2-p}\geq-\Lambda(\delta),\quad&p\neq 2,\\[10pt]
\phantom{\frac{1}{2-p}}\frac{\d}{\d t}\norm{u(t)}^{2\phantom{-p}}\geq-2\Lambda(\delta)\norm{u(t)}^2,\quad&p=2.
\end{cases}
\end{align*}
If $p=2$ we can use Gronwall's Lemma to infer 
$$\norm{u(t)}^2\geq\norm{u(\delta)}^2\exp(-2\Lambda(\delta)t)>0$$ 
which yields a contradiction for $t\to T$. If $p>2$ we integrate from $\delta$ to $t<T$ which yields
$$\norm{u(t)}^{2-p}\leq\norm{u(\delta)}^{2-p}+\Lambda(\delta)(p-2)(t-\delta).$$
Letting $t\to T$ gives the contradiction since the left hand side diverges whereas the right hand side remains bounded.
\qed 
\end{proof}

The proof of Theorem~\ref{thm:infinite_ext} has already constructed lower bounds for $\norm{u(t)}$ in the case $p\geq2$ which we collect in the following Corollary. Note that already in \cite{alikakos1982lower} lower bounds in the case $p>2$ and a Fr\'{e}chet differentiable functional $J$ where given. 

\begin{corollary}[Lower bounds for infinite extinction]\label{cor:lower_bounds}
Under the conditions of Theorem~\ref{thm:infinite_ext} and letting $\Lambda(t)$ be given by \eqref{eq:Lambda} it holds for all $\delta>0$ and $t\geq\delta$
\begin{alignat}{2}
\norm{u(t)}^{2\phantom{p-}}&\geq\norm{u(\delta)}^2\exp\left(-2\Lambda(\delta)(t-\delta)\right),\qquad&&p=2,\\
\norm{u(t)}^{p-2}&\geq\frac{1}{\norm{u(\delta)}^{2-p}+(p-2)\Lambda(\delta)(t-\delta)},\qquad&&p>2,
\end{alignat}
where $\delta=0$ is admissible if $J(f)<\infty$.
\end{corollary}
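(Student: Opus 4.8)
The plan is to notice that the estimates for $\delta>0$ were in effect already produced inside the proof of Theorem~\ref{thm:infinite_ext}: there we combined Proposition~\ref{prop:diss_u_Lambda} with the monotonicity of $\Lambda$ from Proposition~\ref{prop:decrease_rayleigh}, only to derive a contradiction. Now that Theorem~\ref{thm:infinite_ext} tells us $\tex=\infty$, the very same computation, read as an estimate rather than as a contradiction, furnishes the asserted bounds for all $t\geq\delta$. So the proof reduces to recollecting that argument and then separately dealing with the limiting case $\delta=0$.

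Concretely, I would fix $0<\delta\leq t$, use Proposition~\ref{prop:decrease_rayleigh} to bound $\Lambda(s)\leq\Lambda(\delta)$ for all $s\in[\delta,t]$, and feed this into the dissipation identities of Proposition~\ref{prop:diss_u_Lambda}. For $p=2$ this gives $\frac{\d}{\d s}\norm{u(s)}^2\geq-2\Lambda(\delta)\norm{u(s)}^2$, and Gronwall's lemma yields the exponential lower bound. For $p>2$ one obtains $\frac{1}{2-p}\frac{\d}{\d s}\norm{u(s)}^{2-p}\geq-\Lambda(\delta)$; here the single point to be careful about is that multiplying through by $2-p<0$ reverses the inequality, so that $\frac{\d}{\d s}\norm{u(s)}^{2-p}\leq(p-2)\Lambda(\delta)$. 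Integrating from $\delta$ to $t$ and then inverting — legitimate since the resulting right-hand side stays strictly positive — produces the claimed bound. None of this goes beyond routine calculus.

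The only slightly delicate point, and the step I would expect to take the most thought, is justifying that $\delta=0$ is admissible when $J(f)<\infty$. In that case $f\in\dom(J)$, and by the regularity statements of Theorem~\ref{thm:brezis} together with the energy identity collected in the Appendix one has $u(t)\to f$ in $\H$ and $J(u(t))\to J(f)$ as $t\to0^+$; since $f\in\H_0$ also $\norm{f}\neq 0$, so $\Lambda$ extends continuously to $[0,\tex)$ with $\Lambda(0)=R(f)=pJ(f)/\norm{f}^p$, still non-increasing. Hence $\Lambda(\delta)\leq\Lambda(0)=R(f)$ for every $\delta>0$, and passing to the limit $\delta\to0^+$ in the two bounds — using continuity of $s\mapsto\norm{u(s)}$ at the origin — yields exactly the stated inequalities with $\delta$ replaced by $0$. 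The underlying fact, that $J$ (equivalently $\Lambda$) is continuous along the flow up to $t=0$ when the datum lies in the effective domain, is a standard property of gradient flows of proper convex lower semi-continuous functionals, so no genuinely new ingredient is required.
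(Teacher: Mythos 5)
Your proposal is correct and follows exactly the paper's route: the corollary is obtained by rereading the estimates already derived in the proof of Theorem~\ref{thm:infinite_ext} (monotonicity of $\Lambda$ from Proposition~\ref{prop:decrease_rayleigh} fed into Proposition~\ref{prop:diss_u_Lambda}, then Gronwall for $p=2$ and integration plus inversion for $p>2$). Your justification of the $\delta=0$ case via continuity of $\Lambda$ up to $t=0$ when $J(f)<\infty$ is a sound filling-in of a detail the paper leaves implicit.
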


Now we turn to general upper bounds of the evolution which follow from the coercivity assumption \eqref{eq:first_eigenvalue}. 

\begin{corollary}[General upper bounds]\label{cor:upper_bounds}
Let $u(t)$ solve \eqref{gradflow} with $f\in\H_0$. Then it holds
\begin{subequations}\label{ineq:decay_u_gen}
\begin{alignat}{2}
\norm{u(t)}^{2-p}&\leq\norm{f}^{2-p}-(2-p)\lambda_1 t,\qquad &&p<2,\label{ineq:decay_u_p<2}\\
\norm{u(t)}^{2\phantom{-p}}&\leq\norm{f}^2\exp(-2\lambda_1t),\qquad &&p=2,\label{ineq:decay_u_p=2}\\
\norm{u(t)}^{p-2}&\leq\frac{1}{\norm{f}^{2-p}+(p-2)\lambda_1 t},\qquad &&p>2.\label{ineq:decay_u_p>2}
\end{alignat}
\end{subequations}
\end{corollary}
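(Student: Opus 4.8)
The plan is to derive all three bounds from the single elementary observation that the Rayleigh quotient evaluated along the flow never drops below $\lambda_1$, i.e.\ $\Lambda(t)=R(u(t))\geq\lambda_1$ for all $0<t<\tex$. This is immediate from the definition \eqref{eq:first_eigenvalue} of $\lambda_1$ as the infimum of $R$ over $\H_0$, together with the fact that $u(t)\in\H_0$ for every $t<\tex$: it is nonzero by the definition \eqref{def:ext_time} of $\tex$, and it stays orthogonal to $\calN(J)$ throughout the evolution (Proposition~\ref{prop:orth}). Feeding $\Lambda\geq\lambda_1$ into the dissipation identities of Proposition~\ref{prop:diss_u_Lambda} turns each of them into a differential inequality, which is then integrated.

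For $p\neq 2$, Proposition~\ref{prop:diss_u_Lambda} gives $\frac{\d}{\d t}\norm{u(t)}^{2-p}=-(2-p)\Lambda(t)$ for almost every $t\in(0,\tex)$. When $p<2$ the prefactor $2-p$ is positive, so the right-hand side is $\leq-(2-p)\lambda_1$; when $p>2$ it is negative, so the right-hand side is $\geq(p-2)\lambda_1$. Fixing $0<\delta<t<\tex$ and integrating over $[\delta,t]$ --- on which $s\mapsto\norm{u(s)}^{2-p}$ is Lipschitz, since $s\mapsto u(s)$ is Lipschitz by the regularizing effect contained in Theorem~\ref{thm:brezis} and $\norm{u(\cdot)}$ is bounded away from $0$ there --- yields
\begin{align*}
\norm{u(t)}^{2-p}&\leq\norm{u(\delta)}^{2-p}-(2-p)\lambda_1(t-\delta),\qquad p<2,\\
\norm{u(t)}^{2-p}&\geq\norm{u(\delta)}^{2-p}+(p-2)\lambda_1(t-\delta),\qquad p>2.
\end{align*}
Since $u\in C([0,\tex);\H)$, letting $\delta\downarrow 0$ replaces $\norm{u(\delta)}$ by $\norm{f}$ and gives \eqref{ineq:decay_u_p<2} directly when $p<2$, and $\norm{u(t)}^{2-p}\geq\norm{f}^{2-p}+(p-2)\lambda_1 t$ when $p>2$. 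In the latter case $\tex=\infty$ by Theorem~\ref{thm:infinite_ext}, so $u(t)\neq 0$ for every $t$ and inverting this inequality produces \eqref{ineq:decay_u_p>2}.

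For $p=2$, the second line of \eqref{eq:diss_u_Lambda} reads $\frac{\d}{\d t}\norm{u(t)}^2=-2\Lambda(t)\norm{u(t)}^2\leq-2\lambda_1\norm{u(t)}^2$, and Gronwall's lemma on $[\delta,t]$ gives $\norm{u(t)}^2\leq\norm{u(\delta)}^2\exp(-2\lambda_1(t-\delta))$; passing to the limit $\delta\downarrow 0$ yields \eqref{ineq:decay_u_p=2}.

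There is essentially no obstacle here: the argument amounts to inserting the Rayleigh-quotient lower bound $\Lambda\geq\lambda_1$ into Proposition~\ref{prop:diss_u_Lambda} and integrating. The only point requiring a word of care is the endpoint $t=0$ when $J(f)=\infty$, where $s\mapsto\norm{u(s)}^{2-p}$ is merely continuous rather than absolutely continuous up to $0$; this is sidestepped by integrating from an arbitrary $\delta>0$ and exploiting continuity of the solution at $t=0$ to let $\delta\downarrow 0$ afterwards.
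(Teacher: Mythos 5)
Your proof is correct and follows exactly the paper's argument: insert the bound $\Lambda(t)\geq\lambda_1$ into the dissipation identities of Proposition~\ref{prop:diss_u_Lambda} and integrate, using Gronwall's lemma in the case $p=2$. The extra care you take near $t=0$ (integrating from $\delta>0$ and letting $\delta\downarrow 0$ by continuity of the solution) is a reasonable refinement of the same approach.
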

\begin{proof}
For the proof one uses $\Lambda(t)\geq\lambda_1$ in \eqref{eq:diss_u_Lambda} and integrates the resulting inequality from $0$ to $t$ (using Gronwall's Lemma for $p=2$).
\qed 
\end{proof}

\begin{remark}[Upper bounds for metric gradient flows]\label{rem:metric_gradient_flow}
In \cite{hauer2017kurdyka} the authors proved such upper bounds relying on solely a coercivity assumption of the type \eqref{eq:first_eigenvalue} in the very general setting of metric gradient flows. Furthermore, they relate coercivity to the validity of a Kurdyka-\L{}ojasiewicz inequality. From these results and also from our proofs it is clear that only coercivity and no homogeneity of the functional is necessary for upper bounds of the evolution. 
\end{remark}

\change{%
\begin{remark}[The role of homogeneity]
Note that without using the homogeneity of~$J$, equality \eqref{eq:diss_u_Lambda} turns into $\leq$ which however does not affect the proof of the upper bounds in Corollary~\ref{cor:upper_bounds}. However, the lower bounds massively rely on Proposition~\ref{prop:decrease_rayleigh} for which homogeneity seems to be unavoidable.
\end{remark}}

The upper bound for $p<2$ implies that the solution extincts in finite time and yields an estimate of the extinction time in terms of the first eigenvalue.
\begin{theorem}[Finite extinction time for $p<2$]\label{thm:extinction_time}
The extinction time $\tex$ of the gradient flow \eqref{gradflow} with $f\in\H_0$ and $p<2$ can be bounded as
\begin{align}\label{ineq:upper_bound_ext_time}
\tex\leq\frac{\norm{f}^{2-p}}{(2-p)\lambda_1}<\infty.
\end{align}
\end{theorem}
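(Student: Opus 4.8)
The plan is to read the claim off directly from the a priori decay estimate that has already been established in Corollary~\ref{cor:upper_bounds}. For $p<2$, inequality \eqref{ineq:decay_u_p<2} states that
\begin{align*}
\norm{u(t)}^{2-p}\leq\norm{f}^{2-p}-(2-p)\lambda_1 t
\end{align*}
along the flow. Since $2-p>0$, the left-hand side is nonnegative, whereas the right-hand side is affine in $t$ with strictly negative slope $-(2-p)\lambda_1<0$ — this is exactly where coercivity enters, guaranteeing $\lambda_1>0$ — and it vanishes precisely at $t^*:=\norm{f}^{2-p}/((2-p)\lambda_1)$, which is finite because $f\in\H$ and $\lambda_1>0$.

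Next I would show that the solution has vanished by time $t^*$. From the dissipation identity \eqref{eq:diss_u} one has $\frac{\d}{\d t}\frac12\norm{u(t)}^2=-pJ(u(t))\leq 0$, so $t\mapsto\norm{u(t)}$ is non-increasing; hence once $u$ reaches $0$ it stays at $0$, and $\tex=\inf\{t>0\st u(t)=0\}$. Now fix any $t$ with $0<t<\tex$, so that $u(t)\neq 0$ and therefore $\norm{u(t)}^{2-p}>0$. The displayed inequality then forces $\norm{f}^{2-p}-(2-p)\lambda_1 t>0$, i.e. $t<t^*$. Taking the supremum over all such $t$ gives $\tex\leq t^*$, which is precisely \eqref{ineq:upper_bound_ext_time}.

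I do not expect a genuine obstacle here: all the substance sits in the earlier dissipation analysis (Proposition~\ref{prop:diss_u_Lambda}, the bound $\Lambda(t)\geq\lambda_1$, and its integration in Corollary~\ref{cor:upper_bounds}). The only point that deserves a moment's care is compatibility with the definition \eqref{def:ext_time} of $\tex$ — one must ensure the solution does not leave and re-enter $0$ — which is settled by the monotonicity of $\norm{u(\cdot)}$ noted above. Alternatively one could integrate \eqref{eq:diss_u_Lambda} directly using $\Lambda\geq\lambda_1$ and arrive at the same conclusion, but this only repeats the proof of the Corollary.
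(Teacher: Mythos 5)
Your argument is correct and is exactly the paper's proof: the bound is read off directly from \eqref{ineq:decay_u_p<2} in Corollary~\ref{cor:upper_bounds}, whose right-hand side becomes non-positive at $t^*=\norm{f}^{2-p}/((2-p)\lambda_1)$. The extra remark that $\norm{u(\cdot)}$ is non-increasing (so the solution stays at zero) is a sensible piece of bookkeeping for the definition \eqref{def:ext_time}, but it does not change the route.
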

\begin{proof}
Inequality \eqref{ineq:upper_bound_ext_time} follows directly from \eqref{ineq:decay_u_p<2}. 
\qed 
\end{proof}

Up to now we have not seen lower bounds in the case $p<2$. These---together with sharper upper bounds---follow a-posteriori from the existence of a finite extinction time.

\begin{corollary}[Sharper bounds for finite extinction]
Let $u(t)$ solve \eqref{gradflow} with $f\in\H_0$ and $p<2$. Then it holds
\begin{align}
\norm{u(t)}^{2-p}&\leq (2-p)\Lambda(t)(\tex-t),\label{ineq:upper_bound_p<2}\\
\norm{u(t)}^{2-p}&\geq (2-p)\lambda_1(\tex-t).\label{ineq:lower_bound_p<2}
\end{align}
\end{corollary}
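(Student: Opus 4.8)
The plan is to integrate the dissipation identity of Proposition~\ref{prop:diss_u_Lambda} backwards in time, from an arbitrary $t\in(0,\tex)$ up to the extinction time $\tex$. Concretely, for $p<2$ that identity reads
\begin{align*}
\frac{\d}{\d t}\norm{u(t)}^{2-p}=-(2-p)\Lambda(t)\qquad\text{for a.e. }t\in(0,\tex),
\end{align*}
and the map $t\mapsto\norm{u(t)}^{2-p}$ is locally Lipschitz on $(0,\tex)$ (by Theorem~\ref{thm:brezis} together with \eqref{eq:diss_u}, as recorded in the remark following \eqref{eq:Lambda}), hence absolutely continuous, so the fundamental theorem of calculus applies on any compact subinterval.

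First I would fix $t\in(0,\tex)$ and integrate the above identity from $t$ to $s$ for $t<s<\tex$, obtaining $\norm{u(s)}^{2-p}-\norm{u(t)}^{2-p}=-(2-p)\int_t^s\Lambda(r)\,\d r$. Then I would let $s\uparrow\tex$: since $u$ is continuous with $u(\tex)=0$ and $2-p>0$, the left-hand boundary term $\norm{u(s)}^{2-p}\to 0$, giving the exact representation
\begin{align*}
\norm{u(t)}^{2-p}=(2-p)\int_t^{\tex}\Lambda(r)\,\d r.
\end{align*}
(One should check the integral is finite, which it is because $\Lambda$ is non-increasing by Proposition~\ref{prop:decrease_rayleigh}, hence bounded on $[t,\tex]$ by $\Lambda(t)$.) The two desired estimates now follow by bounding $\Lambda(r)$ on the interval $[t,\tex]$: from above by $\Lambda(t)$ using monotonicity (Proposition~\ref{prop:decrease_rayleigh}) to get $\norm{u(t)}^{2-p}\leq(2-p)\Lambda(t)(\tex-t)$, and from below by $\lambda_1$ using the coercivity bound $\Lambda(r)\geq\lambda_1$ to get $\norm{u(t)}^{2-p}\geq(2-p)\lambda_1(\tex-t)$.

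I do not expect any genuine obstacle here; the only points requiring a little care are the passage to the limit $s\uparrow\tex$ in the boundary term (which is where finiteness of $\tex$, established in Theorem~\ref{thm:extinction_time}, and the continuity of the solution up to $\tex$ enter) and the justification of integrating the a.e.-derivative, which is handled by the absolute continuity of $t\mapsto\norm{u(t)}^{2-p}$. Everything else is the elementary monotonicity/coercivity sandwich $\lambda_1\leq\Lambda(r)\leq\Lambda(t)$ on $[t,\tex]$.
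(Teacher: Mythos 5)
Your proposal is correct and follows exactly the paper's own argument: integrate \eqref{eq:diss_u_Lambda} from $t$ to $\tex$, use $u(\tex)=0$, and sandwich $\Lambda$ between $\lambda_1$ (coercivity) and $\Lambda(t)$ (monotonicity from Proposition~\ref{prop:decrease_rayleigh}). The extra care you take with absolute continuity and the boundary limit is sound but not a departure from the paper's route.
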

\begin{proof}
For \eqref{ineq:upper_bound_p<2} one integrates \eqref{eq:diss_u_Lambda} form $t$ to $\tex$ and uses that $\Lambda$ is non-increasing. For the lower bound \eqref{ineq:lower_bound_p<2} one uses $\Lambda(t)\geq\lambda_1$ in \eqref{eq:diss_u_Lambda} and integrates the resulting inequality from $t$ to $\tex$.
\qed 
\end{proof}

\begin{remark}
Some remarks regarding the coercivity \eqref{ineq:coercivity} and finite extinction time are in order.
\begin{enumerate}
\item If $\H$ is finite dimensional and $J$ is the $p$-th power of a semi-norm, the equivalence of norms show that for all $u\in\H_0$ it holds $cJ(u)\leq\norm{u}^p\leq CJ(u)$ for some $c,C>0$ such that one obtains lower and upper bounds from \eqref{eq:diss_u}, straightforwardly, without studying the evolution of the Rayleigh quotient.
\item If $\dom(J)\subset\H$ is a Banach space, inequality \eqref{ineq:coercivity} implies an continuous embedding of $\dom(J)$ into $\H$, so for instance if $J(u)=\int_\Omega|\nabla u|^p\d x$ and $\H=L^2(\Omega)$ this is the embedding $W^{1,p}(\Omega)\hookrightarrow L^2(\Omega)$.
\item All the proofs show that in fact the validity of \eqref{ineq:coercivity} is only required for the solution $u(t)$ of the gradient flow, not for all $u\in\H_0$. In particular, for special initial conditions one can have coercivity of $J$ \emph{along} the solution of the gradient flow, even if this is not true on the whole space. As a prototypical example, one can think of the parabolic $p$-Laplace equation where the ambient space dimension is too large to have a compact or continuous embedding of $W^{1,p}$ or $BV$ into $L^2$. However, for bounded initial data $f$ one can use the maximum principle to show that coercivity is indeed fulfilled along the flow (cf.~\cite{andreu2002some} for the total variation flow).  
\item An inequality similar to \eqref{ineq:coercivity} is indeed necessary for finite extinction: assume that $\tex$ is the finite extinction time of \eqref{gradflow}. Then by integrating \eqref{eq:diss_u} and using that $t\mapsto J(u(t))$ is non-increasing it follows 
$$\norm{u(t)}^2=2p\int_t^\tex J(u(s))\d s\leq 2p(\tex-t)J(u(t)),\quad\forall 0\leq t\leq\tex.$$
\end{enumerate}

\end{remark}

\subsection{Case I: Finite Extinction Time}
\label{sec:finite_time_ext}
Our aim is to prove that the solution of the initial value problem in \eqref{eq:characteristic_ODE} is a suitable rescaling for $u(t)$ in the regime $1\leq p<2$. If the data is an eigenvector, we have already seen in Section~\ref{sec:evolution_of_eigenfcts} that $\lambda$ in \eqref{eq:characteristic_ODE} must be chosen as the eigenvalue. For general data, however, one has to replace the eigenvalue by a different number. Motivated by \eqref{eq:extinction_time} we define 
\begin{align}\label{eq:lambda}
\lambda&:=\frac{1}{(2-p)\tex},\\
a(t)&\text{ solves \eqref{eq:characteristic_ODE},}\notag\\
\label{eq:rescaling_p-hom}
w(t)&:=\frac{u(t)}{a(t)},\qquad 0\leq t<\tex.
\end{align}
In order to prove that for general data the rescalings \eqref{eq:rescaling_p-hom} converge to an eigenfunction, the main ingredient will be the convergence 
\begin{align}\label{eq:convergence_p_w}
\frac{\sg(t)}{a(t)^{p-1}}-\lambda w(t)\to 0,\quad t\to\tex,
\end{align}
which we will derive from the dissipation Lemma~\ref{lem:dissipation}.

We start with bounds from below and above of the rescaled solutions.

\begin{lemma}[Bounds for rescaled solutions]\label{lem:boundedness_resc}
The map $t\mapsto{w(t)}$ defined in \eqref{eq:rescaling_p-hom} meets
\begin{align}\label{ineq:bounds_p<2}
(2-p)\lambda_1\tex\leq\norm{w(t)}^{2-p}\leq(2-p)\tex\Lambda(t),\quad t\geq 0,
\end{align}
and is, in particular, bounded.
\end{lemma}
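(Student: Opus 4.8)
The plan is to read off the statement directly from the a-posteriori bounds already derived for the finite-extinction regime, once one records what the specific choice of $\lambda$ in \eqref{eq:lambda} does to the rescaling function $a$. Since $\lambda=\tfrac{1}{(2-p)\tex}$ we have $(2-p)\lambda=1/\tex$, so from \eqref{eq:solution_ODE},
$$a(t)^{2-p}=1-(2-p)\lambda t=1-\frac{t}{\tex}=\frac{\tex-t}{\tex},\qquad 0\le t<\tex.$$
As $2-p>0$ and $a(t)>0$ on $[0,\tex)$, this yields the clean identity
$$\norm{w(t)}^{2-p}=\frac{\norm{u(t)}^{2-p}}{a(t)^{2-p}}=\frac{\tex}{\tex-t}\,\norm{u(t)}^{2-p}.$$

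First I would substitute into this identity the two sharper finite-extinction estimates \eqref{ineq:upper_bound_p<2} and \eqref{ineq:lower_bound_p<2}. The factor $\tex-t$ cancels in each case: the upper bound $\norm{u(t)}^{2-p}\le(2-p)\Lambda(t)(\tex-t)$ becomes $\norm{w(t)}^{2-p}\le(2-p)\tex\,\Lambda(t)$, and the lower bound $\norm{u(t)}^{2-p}\ge(2-p)\lambda_1(\tex-t)$ becomes $\norm{w(t)}^{2-p}\ge(2-p)\lambda_1\tex$, which together are exactly \eqref{ineq:bounds_p<2}. Boundedness of $t\mapsto w(t)$ then follows at once: $\Lambda$ is non-increasing by Proposition~\ref{prop:decrease_rayleigh} and finite on $(0,\tex)$ (since $u(t)\in\dom(\partial J)$, hence $J(u(t))<\infty$, for $t>0$), so for any fixed $\delta\in(0,\tex)$ the upper bound gives $\norm{w(t)}^{2-p}\le(2-p)\tex\,\Lambda(\delta)$ on $[\delta,\tex)$, while on the compact interval $[0,\delta]$ the map $w=u/a$ is continuous (as $a$ is continuous and strictly positive there) and therefore bounded.

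I do not expect a genuine obstacle here: the lemma is essentially a corollary of the dissipation estimates established above, the only real content being that $a$ is engineered so that $a^{2-p}$ decays linearly with precisely the slope matching the bounds on the decay of $\norm{u(t)}^{2-p}$. The two points worth a line of care are the endpoint $t=0$ and degenerate data. At $t=0$ one has $w(0)=f$, the lower bound in \eqref{ineq:bounds_p<2} then reads $(2-p)\lambda_1\tex\le\norm{f}^{2-p}$, which is exactly the extinction-time estimate of Theorem~\ref{thm:extinction_time}, and the upper bound reads $\norm{f}^{2-p}\le(2-p)\tex\,R(f)$, which is trivially satisfied if $J(f)=\infty$ and otherwise is the $t=0$ instance of \eqref{ineq:upper_bound_p<2}; so including $t=0$ in \eqref{ineq:bounds_p<2} is consistent. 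One should also just confirm that the cancellation of $\tex-t$ above is legitimate on the whole of $[0,\tex)$, which it is since $\tex-t>0$ there.
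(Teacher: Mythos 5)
Your proof is correct and follows exactly the paper's route: the paper's own proof is the one-liner ``this follows directly from \eqref{ineq:upper_bound_p<2}, \eqref{ineq:lower_bound_p<2}, and the definition of $a(t)$,'' which is precisely your computation $a(t)^{2-p}=(\tex-t)/\tex$ followed by cancellation of the factor $\tex-t$. Your extra care at $t=0$ and about finiteness of $\Lambda$ is a welcome elaboration but not a departure from the argument.
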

\begin{proof}
This follows directly from \eqref{ineq:upper_bound_p<2}, \eqref{ineq:lower_bound_p<2}, and the definition of $a(t)$.
\qed 
\end{proof}

The boundedness of $t\mapsto\norm{w(t)}$ would already be sufficient to prove the convergence \eqref{eq:convergence_p_w}. However, we can show a bit more, namely that the norms $\norm{w(t)}$ are in fact non-increasing.

\begin{lemma}\label{lem:decreasing_profiles}
It holds that $t\mapsto\norm{w(t)}$ is non-increasing.
\end{lemma}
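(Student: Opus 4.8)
The plan is to read off the sign of $\frac{\d}{\d t}\norm{w(t)}^2$ from the dissipation Lemma~\ref{lem:dissipation} and to recognise the resulting condition as exactly the upper bound already proved in Lemma~\ref{lem:boundedness_resc}. Preliminarily, note that $u(t)$ is locally Lipschitz on $(0,\tex)$ and continuous up to $t=0$, while $a(t)$ is smooth and strictly positive on $[0,\tex)$, so $t\mapsto\frac{1}{2}\norm{w(t)}^2$ is locally Lipschitz; hence it suffices to show that its a.e.\ derivative is non-positive, and monotonicity of $x\mapsto\sqrt{x}$ then transfers the conclusion to $t\mapsto\norm{w(t)}$. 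Also recall from Lemma~\ref{lem:boundedness_resc} that $\norm{w(t)}>0$, so all negative powers of $\norm{w(t)}$ below are well-defined.

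First I would invoke \eqref{eq:dnormw}, which reads
$$\lambda\,\frac{\d}{\d t}\,\frac{1}{2}\norm{w(t)}^2 = \frac{\lambda}{a(t)^2}\left(\lambda\,\frac{\norm{u(t)}^2}{a(t)^{2-p}} - pJ(u(t))\right),$$
so, since $\lambda>0$ and $a(t)>0$, the derivative of $\frac{1}{2}\norm{w(t)}^2$ has the same sign as the bracket. Using $pJ(u(t))=\Lambda(t)\norm{u(t)}^p$ from \eqref{eq:Lambda} together with $u(t)=a(t)w(t)$, hence $\norm{u(t)}^{p-2}=a(t)^{p-2}\norm{w(t)}^{p-2}$, the bracket rewrites as
$$\lambda\,\frac{\norm{u(t)}^2}{a(t)^{2-p}} - pJ(u(t)) = \norm{u(t)}^2\, a(t)^{p-2}\left(\lambda - \Lambda(t)\norm{w(t)}^{p-2}\right).$$
Therefore $\frac{\d}{\d t}\norm{w(t)}^2\le 0$ for a.e.\ $t$ if and only if $\lambda\le\Lambda(t)\norm{w(t)}^{p-2}$. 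Since $\Lambda(t)\ge\lambda_1>0$ and $2-p>0$, this is equivalent to $\norm{w(t)}^{2-p}\le \Lambda(t)/\lambda$, and by the choice $\lambda=\frac{1}{(2-p)\tex}$ in \eqref{eq:lambda} the right-hand side equals $(2-p)\tex\,\Lambda(t)$.

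It then remains to observe that $\norm{w(t)}^{2-p}\le(2-p)\tex\,\Lambda(t)$ is precisely the upper bound established in Lemma~\ref{lem:boundedness_resc}, valid for all $t\ge 0$. Hence the a.e.\ derivative of $t\mapsto\frac{1}{2}\norm{w(t)}^2$ is non-positive and the claim follows. There is essentially no obstacle in this argument; the only points requiring a little care are the sign bookkeeping when the negative exponent $p-2$ is inverted and the (standard) passage from an a.e.\ non-positive derivative of a locally Lipschitz function to monotonicity.
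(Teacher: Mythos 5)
Your proof is correct and follows essentially the same route as the paper: both start from \eqref{eq:dnormw} and show the bracket is non-positive using the sharp upper bound \eqref{ineq:upper_bound_p<2} (which you invoke in its rescaled form via Lemma~\ref{lem:boundedness_resc}, an equivalent statement after dividing by $a(t)^{2-p}$). The extra care you take with the sign of the exponent $p-2$ and the passage from an a.e.\ non-positive derivative to monotonicity is sound but not a substantive difference.
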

\begin{proof}
We already have an expression for the derivative of the map $t\mapsto\frac{1}{2}\norm{w(t)}^2$, given in \eqref{eq:dnormw}. Studying the square brackets there and using the sharp upper bounds \eqref{ineq:upper_bound_p<2} we derive
\begin{align*}
\lambda\frac{\norm{u(t)}^2}{a(t)^{2-p}}-pJ(u(t))\leq\lambda\frac{(2-p)}{a(t)^{2-p}}\Lambda(t)\norm{u(t)}^p(\tex-t)-pJ(u(t))=0,
\end{align*}
where we also used the definitions of $a(t)$, $\Lambda(t)$, and $\lambda$ (cf.~\eqref{eq:solution_ODE}, \eqref{eq:Lambda}, and \eqref{eq:lambda}, respectively). This already concludes the proof.
\qed 
\end{proof}

Dissipation and boundedness from Lemmas~\ref{lem:dissipation} and~\ref{lem:boundedness_resc} imply the desired asymptotics for the subgradients $\sg(t)$:
\begin{corollary}
The convergence \eqref{eq:convergence_p_w} holds true.
\end{corollary}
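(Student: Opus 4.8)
The plan is to integrate the dissipation identity \eqref{eq:diss_p-hom} from Lemma~\ref{lem:dissipation} over the finite time interval $(0,\tex)$ and exploit the boundedness of $t\mapsto J(w(t))$ to conclude that the integral of the quadratic term is finite; once this is known, a continuity/monotonicity argument pins down the convergence \eqref{eq:convergence_p_w}. First I would rewrite \eqref{eq:diss_p-hom} as
\begin{align*}
\frac{\d}{\d t}J(w(t)) - \lambda\frac{\d}{\d t}\frac12\norm{w(t)}^2 = -\frac{1}{a(t)^{2-p}}\norm{\frac{\sg(t)}{a(t)^{p-1}}-\lambda w(t)}^2 \leq 0,
\end{align*}
which already shows that $t\mapsto J(w(t)) - \frac\lambda2\norm{w(t)}^2$ is non-increasing. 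Since $J(w(t)) = J(u(t))/a(t)^p = \Lambda(t)\norm{u(t)}^p/(p\,a(t)^p) = \frac1p\Lambda(t)\norm{w(t)}^p$ and $t\mapsto\norm{w(t)}$ is bounded by Lemma~\ref{lem:boundedness_resc} (indeed non-increasing by Lemma~\ref{lem:decreasing_profiles}) while $\Lambda(t)$ is non-increasing and bounded below by $\lambda_1>0$ (Propositions~\ref{prop:decrease_rayleigh} and~\ref{prop:diss_u_Lambda}), the quantity $J(w(t))$ stays in a bounded interval for all $t\in[0,\tex)$; likewise $\frac\lambda2\norm{w(t)}^2$ is bounded. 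Hence the monotone function $t\mapsto J(w(t))-\frac\lambda2\norm{w(t)}^2$ is bounded, so it has a finite limit as $t\to\tex$, and therefore
\begin{align*}
\int_0^{\tex}\frac{1}{a(t)^{2-p}}\norm{\frac{\sg(t)}{a(t)^{p-1}}-\lambda w(t)}^2\d t = J(w(0)) - \frac\lambda2\norm{w(0)}^2 - \lim_{t\to\tex}\left(J(w(t))-\frac\lambda2\norm{w(t)}^2\right) < \infty.
\end{align*}

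Next I would remove the weight $a(t)^{-(2-p)}$. Recalling $a(t) = (1-(2-p)\lambda t)^{1/(2-p)}$ with $\lambda = 1/((2-p)\tex)$, one has $a(t)^{2-p} = (\tex-t)/\tex$, so $a(t)^{-(2-p)} = \tex/(\tex-t)\geq 1$ for $t\in[0,\tex)$. Consequently the \emph{unweighted} integral is dominated by the weighted one:
\begin{align*}
\int_0^{\tex}\norm{\frac{\sg(t)}{a(t)^{p-1}}-\lambda w(t)}^2\d t \leq \int_0^{\tex}\frac{1}{a(t)^{2-p}}\norm{\frac{\sg(t)}{a(t)^{p-1}}-\lambda w(t)}^2\d t < \infty.
\end{align*}
Thus $g(t):=\norm{\sg(t)a(t)^{-(p-1)}-\lambda w(t)}^2$ is integrable on the bounded interval $(0,\tex)$. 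To upgrade integrability to convergence to zero I would show that $t\mapsto g(t)$ cannot fail to converge; the cleanest route is to argue that $t\mapsto g(t)$, or rather a suitable surrogate, is monotone near $\tex$. A candidate is to revisit \eqref{eq:diss_p-hom} once more: since the left-hand side equals $-a(t)^{-(2-p)}g(t)\leq 0$ and $t\mapsto J(w(t))-\frac\lambda2\norm{w(t)}^2$ converges, for the \emph{value} of $g$ at $\tex$ one argues by contradiction — if $g(t_n)\geq c>0$ along a sequence $t_n\to\tex$, then since $\sg(t)a(t)^{1-p}$ and $w(t)$ are bounded (using $\norm{\sg(t)}^2 = $ minimal-norm subgradient, controlled via the identity $\langle\sg(t),u(t)\rangle = pJ(u(t))$ and $\Lambda$ bounded, plus boundedness of $w$), the map $g$ is, on those scales, uniformly continuous enough that $g\geq c/2$ on intervals of fixed positive length accumulating at $\tex$, contradicting $\int_0^{\tex}g<\infty$.

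The main obstacle I anticipate is precisely this last step: turning $L^1$-integrability of $g$ into pointwise convergence $g(t)\to 0$ requires some equicontinuity or monotonicity of $g$ near $\tex$, and $\sg(t)$ is only an $L^2_{\mathrm{loc}}$ object a priori, so one must carefully transfer the regularity coming from Brezis' theory (Theorem~\ref{thm:brezis}) and the boundedness of $\Lambda$ and of $\norm{w(t)}$ to control how fast $\sg(t)a(t)^{1-p}-\lambda w(t)$ can oscillate. I expect this is handled either by exploiting that $t\mapsto J(w(t))$ is itself monotone (so that the integrand $a^{-(2-p)}g$ equals a derivative of a monotone bounded function and hence must be "small on average near $\tex$" in a way that combines with the lower bound $a^{-(2-p)}\geq\tex/(\tex-t)\to\infty$ to force $g\to 0$), or by a direct estimate on $\frac{\d}{\d t}\norm{w(t)}^2$ from \eqref{eq:dnormw} showing $w$ is asymptotically almost constant. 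Everything else — the integration, the algebraic simplification $a(t)^{2-p} = (\tex-t)/\tex$, and the boundedness inputs — is routine given the lemmas already established.
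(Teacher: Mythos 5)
There is a genuine gap, and it sits exactly where you anticipated. The step in which you discard the weight --- passing from $\int_0^{\tex}\frac{1}{a(t)^{2-p}}\,g(t)\,\d t<\infty$ to the unweighted bound $\int_0^{\tex}g(t)\,\d t<\infty$ with $g(t):=\norm{\sg(t)a(t)^{1-p}-\lambda w(t)}^2$ --- throws away all of the asymptotic information. Finiteness of the integral of a nonnegative function over the \emph{bounded} interval $(0,\tex)$ says nothing about its behavior as $t\to\tex$ (it never implies pointwise convergence, and for a bounded integrand it is vacuous). The equicontinuity you would need to run the subsequent contradiction argument is not available: Brezis' theory (Theorem~\ref{thm:brezis}) gives $\sg(t)$ only as a right-continuous selection, and for $p>1$ the factor $a(t)^{1-p}$ blows up as $t\to\tex$, so $\sg(t)a(t)^{1-p}$ admits no uniform modulus of continuity near the extinction time. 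As written, the main line of your argument does not close.

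The paper's proof keeps the weight, and this is the whole point. With $\lambda=1/((2-p)\tex)$ one has $a(\tau)^{2-p}=(\tex-\tau)/\tex$, so the weighted integrand is $\tex\,g(\tau)/(\tex-\tau)$, and $\tau\mapsto 1/(\tex-\tau)$ is \emph{not} integrable on $(s,\tex)$. Integrating \eqref{eq:diss_p-hom} from $s$ to $t$, discarding the right-hand side via Lemma~\ref{lem:decreasing_profiles}, and using $J\geq 0$ gives $\int_s^{t}\frac{1}{a(\tau)^{2-p}}g(\tau)\,\d\tau\leq J(w(s))$ uniformly in $t$; the finiteness of this integral against a non-integrable weight is what forces $g(\tau)\to 0$ as $\tau\to\tex$. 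Your closing sentence gestures at precisely this mechanism (``combines with the lower bound $a^{-(2-p)}\geq\tex/(\tex-t)\to\infty$''), but you offer it as a speculative fallback rather than as the argument; to repair the proof you should promote it to the main line and drop the unweighted detour entirely.
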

\begin{proof}
Integrating inequality \eqref{eq:diss_p-hom} from $s$ to $t$ where $0<s<t<\tex$ and using Lemma~\ref{lem:decreasing_profiles} yields
$$J(w(t))-J(w(s))+\int_s^t\frac{1}{a(\tau)^{2-p}}\norm{\frac{\sg(\tau)}{a(\tau)^{p-1}}-\lambda w(\tau)}^2\d\tau\leq 0.$$
Letting $t$ tend to $\tex$, using the non-negativity of $J$ and the fact that $a(t)^{2-p}\to 0$ as $t\to\tex$ we can conclude that the squared norm in the integral must converge to zero as $\tau\to\tex$ which finishes the proof.
\qed 
\end{proof}

Lemmas \ref{lem:boundedness_resc} or \ref{lem:decreasing_profiles} also imply that (up to a subsequence) the rescalings $w(t)$ weakly converge to some $w_*$ in $\H$ which we refer to as \emph{asymptotic profile}. However, in order to prove that $w_*$ is an eigenfunction, one needs the convergence to be strong which is an additional regularity assumption that, for instance, can be assured through suitable initial conditions $f$ of the gradient flow (cf.~Section~\ref{sec:criteria}). 

\begin{theorem}[Asymptotic profiles for finite extinction]\label{thm:ext_prof}
Assume that $w(t)$ converges (possibly up to a subsequence) strongly to some $w_*\in\H$ as $t\to\tex$. Then it holds $\lambda w_*\in\partial J(w_*)$, $w_*\neq 0$, and $\norm{w_*}\leq\norm{f}$.
\end{theorem}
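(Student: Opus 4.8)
The plan is to pass to the limit in the subdifferential inclusion along the rescaled flow. First I would recall that $w(t) = u(t)/a(t)$ satisfies, by the $(p-1)$-homogeneity \eqref{eq:p-1-hom_subdiff}, the relation $\sg(t)/a(t)^{p-1} \in \partial J(w(t))$ for almost every $t$, where $\sg(t) \in \partial J(u(t))$ is the minimal-norm selection provided by Theorem~\ref{thm:brezis}. By the preceding Corollary we have the strong convergence $\sg(t)/a(t)^{p-1} - \lambda w(t) \to 0$ as $t \to \tex$, so that $\sg(t)/a(t)^{p-1} \to \lambda w_*$ strongly along the same (sub)sequence on which $w(t) \to w_*$ strongly. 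Since $\partial J$ is a maximal monotone operator and both $w(t) \to w_*$ and $\sg(t)/a(t)^{p-1} \to \lambda w_*$ strongly (strong–weak closedness of the graph would already suffice), the inclusion passes to the limit and yields $\lambda w_* \in \partial J(w_*)$.

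Next I would establish $w_* \neq 0$. This is where the coercivity really pays off: by Lemma~\ref{lem:boundedness_resc} we have the lower bound $\norm{w(t)}^{2-p} \geq (2-p)\lambda_1 \tex$ for all $t \geq 0$, and since $p < 2$ this gives a strictly positive lower bound on $\norm{w(t)}$ that is uniform in $t$; strong convergence then forces $\norm{w_*}^{2-p} \geq (2-p)\lambda_1\tex > 0$, hence $w_* \neq 0$. For the norm bound $\norm{w_*} \leq \norm{f}$, I would use Lemma~\ref{lem:decreasing_profiles}: the map $t \mapsto \norm{w(t)}$ is non-increasing, and $w(0) = u(0)/a(0) = f/1 = f$, so $\norm{w(t)} \leq \norm{f}$ for all $t \in [0,\tex)$; passing to the limit along the subsequence gives $\norm{w_*} \leq \norm{f}$.

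The main obstacle is the passage to the limit in the inclusion $\sg(t)/a(t)^{p-1} \in \partial J(w(t))$: without the strong convergence hypothesis on $w(t)$ one only has weak convergence of $w(t)$ together with strong convergence of the subgradients, and maximal monotonicity does close the graph under exactly this weak–strong pairing — but one must check that $w(t) \to w_*$ strongly is genuinely needed here only for the identification of the limit point, not for the inclusion itself. In fact the cleaner route is to invoke the definition \eqref{def:subdiff} directly: for every $v \in \H$,
\begin{align*}
J(w(t)) + \left\langle \frac{\sg(t)}{a(t)^{p-1}},\, v - w(t) \right\rangle \leq J(v),
\end{align*}
and then let $t \to \tex$ using lower semi-continuity of $J$ (for the $J(w(t))$ term), strong convergence of $\sg(t)/a(t)^{p-1}$ to $\lambda w_*$, and strong convergence of $w(t)$ to $w_*$ in the inner product, to obtain $J(w_*) + \langle \lambda w_*, v - w_*\rangle \leq J(v)$ for all $v$, i.e.\ $\lambda w_* \in \partial J(w_*)$. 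The only delicate point is ensuring $\liminf_{t\to\tex} J(w(t)) \geq J(w_*)$ combines correctly with the inner-product limit, which is immediate once $w(t) \to w_*$ strongly; this is precisely why the strong convergence assumption is imposed in the statement.
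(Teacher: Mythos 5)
Your proposal is correct and follows essentially the same route as the paper: the nontriviality and norm bound come from Lemmas~\ref{lem:boundedness_resc} and~\ref{lem:decreasing_profiles} exactly as in the paper, and the inclusion $\lambda w_*\in\partial J(w_*)$ is obtained by passing to the limit in the subdifferential inequality $J(w(t))+\langle \sg(t)/a(t)^{p-1},v-w(t)\rangle\leq J(v)$ using lower semi-continuity of $J$, the convergence \eqref{eq:convergence_p_w}, and the assumed strong convergence of $w(t)$. The only cosmetic difference is that you justify this inequality via the $(p-1)$-homogeneity of $\partial J$, whereas the paper derives it by testing the original inclusion with $a(t)v$ and dividing by $a(t)^p$; these are equivalent.
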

\begin{proof}
Lemmas~\ref{lem:boundedness_resc} and \ref{lem:decreasing_profiles} imply that $w_*\neq0$ and $\norm{w_*}\leq\norm{f}$, respectively. To prove that $\lambda w_*\in\partial J(w_*)$ we start with
$$J(u(t))+\langle\sg(t),v-u(t)\rangle\leq J(v),\quad\forall v\in\H,\,t>0,$$
which is an equivalent formulation of the gradient flow \eqref{gradflow}. Using this together with the definition of $w(t)$ and the $p$-homogeneity of $J$, we infer
\begin{align*}
J(w(t))+\left\langle\frac{\sg(t)}{a(t)^{p-1}},v-w(t)\right\rangle 
&=\frac{1}{a(t)^p}\left[J(u(t))+\left\langle \sg(t),a(t)v-u(t)\right\rangle\right]\\
&\leq\frac{1}{a(t)^p}J(a(t)v)=J(v),\quad\forall v\in\H.
\end{align*}
If we now use the lower semi-continuity of $J$, the strong convergence of $w(t)$ to $w_*$, and the convergence \eqref{eq:convergence_p_w}, we can pass to the limit in this inequality to obtain
$$J(w_*)+\left\langle\lambda w_*,v-w_*\right\rangle\leq J(v),\quad\forall v\in\H,$$ 
which proves $\lambda w_*\in\partial J(w_*)$.
\qed 
\end{proof}

\begin{remark}
In principle, asymptotic profiles are not uniquely determined, i.e., the sequence $w(t)$ can have several accumulation points as $t\to\tex$.
\end{remark}

Let us now turn to the question under which conditions a possible asymptotic profile is even a ground state. It turns out that this is the case if and only if the lower bound in \eqref{ineq:bounds_p<2} for the rescaled solutions is asymptotically sharp.

\begin{proposition}[Ground states as asymptotic profiles]\label{prop:char_ground_state_1}
For any asymptotic profile it holds $(2-p)\lambda_1\tex\leq \norm{w_*}^{2-p}$ and equality is true if and only if $w_*$ is a ground state.
\end{proposition}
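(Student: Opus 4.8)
The inequality $(2-p)\lambda_1\tex\le\norm{w_*}^{2-p}$ is immediate: it follows by passing to the limit in the lower bound of Lemma~\ref{lem:boundedness_resc}, since $\norm{w(t)}\to\norm{w_*}$ by strong convergence (along the relevant subsequence), and the left-hand side is independent of $t$. So the real content is the equality case, and the plan is to relate $\norm{w_*}^{2-p}$ to the eigenvalue of $w_*$ and then invoke Proposition~\ref{prop:eigenvalues}(1) together with Corollary~\ref{cor:rescaling_eigenvalues}.

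First I would use Theorem~\ref{thm:ext_prof}, which gives $\lambda w_*\in\partial J(w_*)$ with $\lambda=1/((2-p)\tex)$ from \eqref{eq:lambda}. By Corollary~\ref{cor:rescaling_eigenvalues}, $w_*$ is then an eigenfunction in the scale-invariant sense of Definition~\ref{def:rigorous_eigenfunctions} with eigenvalue $\tilde\lambda=\lambda/\norm{w_*}^{p-2}=\norm{w_*}^{2-p}\lambda$. Rearranging, $\norm{w_*}^{2-p}=\tilde\lambda/\lambda=(2-p)\tex\,\tilde\lambda$. Now Proposition~\ref{prop:eigenvalues}(1) tells us $\tilde\lambda\ge\lambda_1$, which re-proves the lower bound $\norm{w_*}^{2-p}\ge(2-p)\lambda_1\tex$, and moreover shows that equality holds in the lower bound \emph{if and only if} $\tilde\lambda=\lambda_1$, i.e. if and only if the eigenvalue of $w_*$ equals $\lambda_1$.

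It then remains to check that an eigenfunction with eigenvalue $\lambda_1$ is precisely a ground state. By Proposition~\ref{prop:critical_rayleigh}, the eigenvalue of $w_*$ is $R(w_*)$, so $\tilde\lambda=\lambda_1$ is equivalent to $R(w_*)=\lambda_1=\inf_{u\in\H_0}R(u)$, i.e. to $w_*$ attaining the infimum defining $\lambda_1$ in \eqref{eq:first_eigenvalue} — which is the definition of a ground state. Conversely, if $w_*$ is a ground state then by Proposition~\ref{prop:critical_rayleigh} it is an eigenfunction with eigenvalue $\lambda_1$, so $\tilde\lambda=\lambda_1$ and equality holds. This closes the equivalence.

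There is no serious obstacle here; the only point requiring a little care is bookkeeping the two different ``eigenvalues'' in play — the constant $\lambda$ from the ODE rescaling versus the scale-invariant eigenvalue $\tilde\lambda$ of $w_*$ — and making sure the conversion factor $\norm{w_*}^{p-2}$ from Corollary~\ref{cor:rescaling_eigenvalues} is applied in the right direction. One should also note that the argument is valid for every accumulation point $w_*$, so the statement is genuinely about ``any asymptotic profile'' as claimed, even though different profiles need not coincide.
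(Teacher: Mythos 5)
Your argument is correct and is essentially the paper's proof: the paper likewise evaluates $\lambda_1\leq pJ(w_*)/\norm{w_*}^p$ using $\lambda w_*\in\partial J(w_*)$ (i.e.\ Euler's identity $\lambda\norm{w_*}^2=pJ(w_*)$) together with $\lambda=1/((2-p)\tex)$, which is exactly your relation $\norm{w_*}^{2-p}=(2-p)\tex\,\tilde\lambda$ with $\tilde\lambda=R(w_*)$. Routing the bookkeeping through Corollary~\ref{cor:rescaling_eigenvalues} and Proposition~\ref{prop:eigenvalues} is only a cosmetic repackaging of the same computation.
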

\begin{proof}
The statement follows by evaluating $\lambda_1\leq J(w_*)/\norm{w_*}^p$, using that $\lambda w_*\in\partial J(w_*)$, and the definition of $\lambda$ in \eqref{eq:lambda}.
\qed 
\end{proof}

It is obvious from \eqref{ineq:bounds_p<2} that if $\Lambda(t)\to\lambda_1$ as $t\to\tex$ then the lower bound is asymptotically sharp and hence $w_*$ is a ground state. The converse is also true.

\begin{corollary}\label{cor:char_ground_state_II}
An asymptotic profile $w_*$ is a ground state if and only if $\lim_{t\nearrow\tex}\Lambda(t)=\lambda_1$.
\end{corollary}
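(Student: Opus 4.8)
The plan is to combine Proposition~\ref{prop:char_ground_state_1} with the fact that $\Lambda$ is non-increasing (Proposition~\ref{prop:decrease_rayleigh}) and the two-sided bound \eqref{ineq:bounds_p<2} from Lemma~\ref{lem:boundedness_resc}. First I would establish the ``if'' direction. Suppose $\lim_{t\nearrow\tex}\Lambda(t)=\lambda_1$. Since $w(t)\to w_*$ (along the relevant subsequence) and, by \eqref{ineq:bounds_p<2}, $\norm{w(t)}^{2-p}\leq(2-p)\tex\Lambda(t)$, passing to the limit gives $\norm{w_*}^{2-p}\leq(2-p)\tex\lambda_1$. Combined with the reverse inequality $(2-p)\lambda_1\tex\leq\norm{w_*}^{2-p}$ of Proposition~\ref{prop:char_ground_state_1}, this forces equality, so $w_*$ is a ground state by that same proposition.

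For the ``only if'' direction, assume $w_*$ is a ground state, so that by Proposition~\ref{prop:char_ground_state_1} we have $\norm{w_*}^{2-p}=(2-p)\lambda_1\tex$. Since $t\mapsto\Lambda(t)$ is non-increasing and bounded below by $\lambda_1$ (coercivity, \eqref{eq:first_eigenvalue}), the limit $\Lambda_\infty:=\lim_{t\nearrow\tex}\Lambda(t)$ exists and satisfies $\Lambda_\infty\geq\lambda_1$. Using the \emph{upper} bound in \eqref{ineq:bounds_p<2} evaluated along the convergent subsequence, together with strong convergence $w(t)\to w_*$, we get $(2-p)\lambda_1\tex=\norm{w_*}^{2-p}\leq(2-p)\tex\Lambda_\infty$, which only gives $\Lambda_\infty\geq\lambda_1$ again and is not yet enough. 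So instead I would use the \emph{lower} bound and monotonicity more carefully: for each fixed $t$, $\Lambda(t)\geq\Lambda_\infty\geq\lambda_1$, and from Proposition~\ref{prop:char_ground_state_1} applied to $w_*$ one has $\lambda=R(w_*)=\lambda_1$ where $\lambda=1/((2-p)\tex)$ is the rescaling constant; hence $\norm{w_*}^{p}=pJ(w_*)/\lambda_1$. The point is that Lemma~\ref{lem:decreasing_profiles} gives $\norm{w(t)}\geq\norm{w_*}$ for all $t$, so $\norm{w(t)}^{2-p}\geq\norm{w_*}^{2-p}=(2-p)\lambda_1\tex$; plugging this into the \emph{upper} bound $\norm{w(t)}^{2-p}\leq(2-p)\tex\Lambda(t)$ yields $\Lambda(t)\geq\lambda_1$ (already known), but taking $t\nearrow\tex$ in the reversed direction—i.e. squeezing $\norm{w(t)}^{2-p}$ between $(2-p)\lambda_1\tex$ and $(2-p)\tex\Lambda(t)$ and letting $\norm{w(t)}^{2-p}\to\norm{w_*}^{2-p}=(2-p)\lambda_1\tex$—forces $\lim_{t\nearrow\tex}(2-p)\tex\Lambda(t)\geq(2-p)\lambda_1\tex$ with the sandwich collapsing, so $\Lambda_\infty=\lambda_1$.

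The main obstacle is the ``only if'' direction: the upper estimate $\norm{w(t)}^{2-p}\le (2-p)\tex\Lambda(t)$ runs the ``wrong way'' to directly conclude $\Lambda_\infty\le\lambda_1$, so one must genuinely exploit that $\norm{w(t)}$ decreases to $\norm{w_*}$ (Lemma~\ref{lem:decreasing_profiles}) and that the value $\norm{w_*}^{2-p}$ is pinned to $(2-p)\lambda_1\tex$ by the ground-state hypothesis, making the two bounds in \eqref{ineq:bounds_p<2} asymptotically coincide. Once the squeeze is set up correctly this is routine; the only subtlety is that convergence of $w(t)$ is a priori only along a subsequence, but since $\Lambda$ is monotone its full limit exists and equals the subsequential limit, so no difficulty arises there. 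I would present it as a two-line deduction citing Proposition~\ref{prop:char_ground_state_1}, the monotonicity of $\Lambda$, and \eqref{ineq:bounds_p<2}.
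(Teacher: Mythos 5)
Your ``if'' direction is fine: passing to the limit in the upper bound of \eqref{ineq:bounds_p<2} and invoking Proposition~\ref{prop:char_ground_state_1} does force equality. The ``only if'' direction, however, has a genuine gap which you half-diagnose yourself and then paper over. The sandwich
\begin{equation*}
(2-p)\lambda_1\tex\;\leq\;\norm{w(t)}^{2-p}\;\leq\;(2-p)\tex\,\Lambda(t),
\end{equation*}
with the middle term converging to the \emph{lower} endpoint, tells you nothing about the upper endpoint: it only re-derives $\Lambda_\infty\geq\lambda_1$, i.e.\ coercivity. A squeeze forces the middle term to converge when the two outer ones share a limit, not the other way around; as far as your argument shows, the inequality $\norm{w(t)}^{2-p}\leq(2-p)\tex\Lambda(t)$ could remain strict in the limit. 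So ``the sandwich collapsing'' is a non sequitur and $\Lambda_\infty=\lambda_1$ is not established.

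The paper closes exactly this gap by upgrading the upper bound to an identity: integrating \eqref{eq:diss_u_Lambda} from $t$ to $\tex$ (where $\norm{u(\tex)}=0$) gives $\norm{u(t)}^{2-p}=(2-p)\int_t^\tex\Lambda(s)\d s$, and dividing by $a(t)^{2-p}=(\tex-t)/\tex$ yields
\begin{equation*}
\norm{w(t)}^{2-p}=\frac{(2-p)\tex}{\tex-t}\int_t^\tex\Lambda(s)\d s\;\longrightarrow\;(2-p)\tex\lim_{s\nearrow\tex}\Lambda(s),
\end{equation*}
the limit of the averaged integral existing because $\Lambda$ is non-increasing and bounded below. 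Hence $\norm{w_*}^{2-p}=(2-p)\tex\,\Lambda_\infty$ holds with \emph{equality}, and both directions of the corollary drop out of Proposition~\ref{prop:char_ground_state_1} simultaneously. (An alternative repair: by \eqref{eq:convergence_p_w} one has $pJ(w(t))=\langle\sg(t)/a(t)^{p-1},w(t)\rangle\to\lambda\norm{w_*}^2=pJ(w_*)$, so $\Lambda(t)=R(w(t))\to R(w_*)=\lambda_1$ by scale invariance of $R$. Either way, some input beyond the two-sided bound \eqref{ineq:bounds_p<2} is indispensable.)
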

\begin{proof}
Integrating \eqref{eq:diss_u_Lambda} from $0<t<\tex$ to $\tex$ yields
$$\norm{u(t)}^{2-p}=(2-p)\int_t^\tex\Lambda(s)\d s.$$
Dividing both sides by $a(t)^{2-p}$ we obtain
$$\norm{w(t)}^{2-p}=\frac{(2-p)\tex}{\tex-t}\int_t^\tex\Lambda(s)\d s.$$
Letting $t\to\tex$ implies due to the continuity of $\Lambda$ on $(0,\tex)$ and the strong convergence of $w(t)$ that
$$\norm{w_*}^{2-p}=(2-p)\tex \lim_{t\nearrow\tex}\Lambda(s)\d s.$$
Proposition~\ref{prop:char_ground_state_1} concludes the proof.
\qed 
\end{proof}

Furthermore, we can use the characterization above to infer that the extinction time of the gradient flow attains its upper bound from \eqref{ineq:upper_bound_ext_time} then any asymptotic profile is a ground state.

\begin{corollary}
Let $\tex=\frac{\norm{f}^{2-p}}{(2-p)\lambda_1}$ and let $w_*$ be an asymptotic profile. Then $w_*$ is a ground state.
\end{corollary}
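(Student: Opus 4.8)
The plan is to combine the general upper bound on the rescaled norms from Lemma~\ref{lem:boundedness_resc} with the hypothesis $\tex=\frac{\norm{f}^{2-p}}{(2-p)\lambda_1}$ and the monotonicity from Lemma~\ref{lem:decreasing_profiles} to squeeze $\norm{w_*}^{2-p}$ down to the value $(2-p)\lambda_1\tex$, at which point Proposition~\ref{prop:char_ground_state_1} gives the conclusion. First I would observe that evaluating the rescaling at $t=0$ gives $w(0)=u(0)/a(0)=f$, so $\norm{w(0)}=\norm{f}$, and therefore by Lemma~\ref{lem:decreasing_profiles} (non-increasing norms) together with the lower semicontinuity of the norm under the (strong) convergence $w(t)\to w_*$, we get $\norm{w_*}\leq\norm{f}$ — this is just the last assertion of Theorem~\ref{thm:ext_prof}. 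Hence $\norm{w_*}^{2-p}\geq\norm{f}^{2-p}$ when $p<2$ (the exponent $2-p$ is positive, so the inequality flips direction relative to the norms), giving $\norm{w_*}^{2-p}\geq\norm{f}^{2-p}=(2-p)\lambda_1\tex$ by the hypothesis on $\tex$.

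On the other hand, Proposition~\ref{prop:char_ground_state_1} states $(2-p)\lambda_1\tex\leq\norm{w_*}^{2-p}$ always, and more importantly that equality holds if and only if $w_*$ is a ground state. So the real content is to establish the reverse inequality $\norm{w_*}^{2-p}\leq(2-p)\lambda_1\tex$. For this I would use the upper bound in \eqref{ineq:bounds_p<2}, namely $\norm{w(t)}^{2-p}\leq(2-p)\tex\,\Lambda(t)$, and pass to the limit $t\to\tex$: since $w(t)\to w_*$ strongly, $\norm{w(t)}^{2-p}\to\norm{w_*}^{2-p}$, so it suffices to show $\limsup_{t\nearrow\tex}\Lambda(t)\leq\lambda_1$. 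Because $\Lambda(t)\geq\lambda_1$ always (coercivity) and $\Lambda$ is non-increasing (Proposition~\ref{prop:decrease_rayleigh}), the limit $\Lambda_\infty:=\lim_{t\nearrow\tex}\Lambda(t)$ exists and satisfies $\Lambda_\infty\geq\lambda_1$. Combining the two chains of inequalities, $\norm{f}^{2-p}=(2-p)\lambda_1\tex\leq\norm{w_*}^{2-p}\leq(2-p)\tex\,\Lambda_\infty$, forces $\Lambda_\infty\geq\lambda_1$ with no contradiction yet — so I instead route through Corollary~\ref{cor:char_ground_state_II}, which says $w_*$ is a ground state iff $\Lambda_\infty=\lambda_1$, and through Proposition~\ref{prop:char_ground_state_1}.

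Cleanest route: from $\norm{w_*}\leq\norm{f}$ and the hypothesis, $\norm{w_*}^{2-p}\leq\norm{f}^{2-p}=(2-p)\lambda_1\tex$; Proposition~\ref{prop:char_ground_state_1} gives the opposite inequality $\norm{w_*}^{2-p}\geq(2-p)\lambda_1\tex$; hence equality, and Proposition~\ref{prop:char_ground_state_1} then yields that $w_*$ is a ground state. The only subtlety — and the one place to be careful — is the direction of the inequality when raising to the power $2-p$: for $p<2$ one has $2-p>0$, so $x\mapsto x^{2-p}$ is increasing on $[0,\infty)$ and $\norm{w_*}\leq\norm{f}$ indeed gives $\norm{w_*}^{2-p}\leq\norm{f}^{2-p}$. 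I expect no genuine obstacle here; the proof is essentially a one-line consequence of Theorem~\ref{thm:ext_prof} (for $\norm{w_*}\leq\norm{f}$) and Proposition~\ref{prop:char_ground_state_1} (for the characterization of equality). The main thing to get right is simply invoking the already-established bound $\norm{w_*}\leq\norm{f}$ rather than re-deriving it, and correctly matching it against the characterization of equality in \eqref{ineq:bounds_p<2}.
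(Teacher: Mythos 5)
Your final ``cleanest route'' is exactly the paper's proof: $\norm{w_*}\leq\norm{f}$ (from Theorem~\ref{thm:ext_prof}) together with the hypothesis gives $\norm{w_*}^{2-p}\leq(2-p)\lambda_1\tex$, Proposition~\ref{prop:char_ground_state_1} gives the reverse inequality, and its equality case concludes. The only blemish is the transient claim in your first paragraph that the inequality ``flips'' under $x\mapsto x^{2-p}$ (it does not, since $2-p>0$), which you correctly retract before stating the final argument, so the proof as ultimately given is sound and identical to the paper's.
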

\begin{proof}
The fact that $\norm{w_*}\leq\norm{f}$ together with Proposition~\ref{prop:char_ground_state_1} yields 
$$\norm{w_*}^{2-p}\leq\norm{f}^{2-p}=(2-p)\lambda_1\tex\leq\norm{w_*}^{2-p}.$$
Hence, equality holds which by Proposition~\ref{prop:char_ground_state_1} implies that $w_*$ is a ground state.
\qed 
\end{proof}

The converse of this statement is general false which we will see later. First we show that it is very exotic that the gradient flow has maximal extinction time since this requires the initialization to be a ground state.

\begin{proposition}
Assume that $\tex=\frac{\norm{f}^{2-p}}{(2-p)\lambda_1}$ and an asymptotic profile exists. Then it holds $\Lambda(t)=\lambda_1$ for all $0\leq t<\tex$ and, hence, $f$ is a ground state.
\end{proposition}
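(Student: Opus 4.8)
The plan is to integrate the dissipation identity for $\norm{u(t)}^{2-p}$ over the full lifespan $(0,\tex)$ of the flow and to play it off against the coercivity bound $\Lambda\ge\lambda_1$. First I would recall from the proof of Corollary~\ref{cor:char_ground_state_II} the identity
$$\norm{u(t)}^{2-p}=(2-p)\int_t^\tex\Lambda(s)\d s,\qquad 0<t<\tex,$$
which comes from integrating \eqref{eq:diss_u_Lambda} from $t$ to $\tex$ and using $\norm{u(\tex)}=0$. Letting $t\to0^+$, the left-hand side converges to $\norm{f}^{2-p}$ by continuity of the flow at $t=0$, and by monotone convergence the right-hand side converges to $(2-p)\int_0^\tex\Lambda(s)\d s$; in particular this integral is finite (even though $\Lambda$ may blow up at $0$ when $J(f)=\infty$), and $\norm{f}^{2-p}=(2-p)\int_0^\tex\Lambda(s)\d s$.

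Next I would insert the hypothesis, which reads $(2-p)\lambda_1\tex=\norm{f}^{2-p}$, together with $\Lambda(s)\ge\lambda_1$ for every $s\in(0,\tex)$ --- this is just \eqref{eq:first_eigenvalue} evaluated at $u(s)$. This gives
$$\norm{f}^{2-p}=(2-p)\int_0^\tex\Lambda(s)\d s\ge(2-p)\lambda_1\tex=\norm{f}^{2-p},$$
so equality must hold throughout, i.e.\ $\int_0^\tex\left(\Lambda(s)-\lambda_1\right)\d s=0$. Since the integrand is nonnegative and $\Lambda$ is continuous on $(0,\tex)$, this forces $\Lambda(t)=\lambda_1$ for every $0<t<\tex$.

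Finally I would transfer this to $f$ itself. From $\Lambda(t)=\lambda_1$ we get $pJ(u(t))=\lambda_1\norm{u(t)}^p$ on $(0,\tex)$; letting $t\to0^+$ and using that $u(t)\to f$ strongly in $\H$, the continuity of $\norm{\cdot}^p$, and the lower semi-continuity of $J$, we obtain $J(f)\le\liminf_{t\to0^+}J(u(t))=\tfrac{\lambda_1}{p}\norm{f}^p$, while coercivity \eqref{eq:first_eigenvalue} gives the reverse inequality $J(f)\ge\tfrac{\lambda_1}{p}\norm{f}^p$. Hence $R(f)=\lambda_1$, i.e.\ $f$ is a ground state; in particular $J(f)<\infty$, so setting $\Lambda(0):=R(f)=\lambda_1$ the identity $\Lambda(t)=\lambda_1$ holds on all of $[0,\tex)$.

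The step I would be most careful about is the passage to the endpoint $t=0$: one should make sure $\Lambda$ is integrable near $0$ before comparing the integrals, but this is automatic from the finiteness of the left-hand side $\norm{f}^{2-p}$ in the identity of the first step, so there is no genuine obstacle here --- the rest is a pure monotonicity and comparison argument. (Incidentally, the assumed existence of an asymptotic profile is not actually used in this proof.)
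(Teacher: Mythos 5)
Your proof is correct, and it takes a genuinely different route from the paper's. The paper argues through the rescaled solutions: it first invokes the preceding corollary to conclude that the asymptotic profile $w_*$ is a ground state, combines $\norm{w_*}\leq\norm{f}$ with the monotonicity of $t\mapsto\norm{w(t)}$ from Lemma~\ref{lem:decreasing_profiles} to force $\norm{w(t)}=\norm{f}$ for all $t$, and then revisits the proof of that lemma to see that the sharp upper bound \eqref{ineq:upper_bound_p<2} must hold with equality, which rewrites as $\norm{f}^{2-p}=(2-p)\tex\Lambda(t)$ and yields $\Lambda\equiv\lambda_1$. You instead bypass the rescaled solutions entirely: you integrate \eqref{eq:diss_u_Lambda} over the whole lifespan to get $\norm{f}^{2-p}=(2-p)\int_0^\tex\Lambda(s)\,\mathrm{d}s$, pit this against $\Lambda\geq\lambda_1$ and the hypothesis on $\tex$, and conclude from the equality case of the integral inequality together with the continuity of $\Lambda$ on $(0,\tex)$. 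What your approach buys is twofold: it shows that the hypothesis ``an asymptotic profile exists'' is superfluous for this proposition (the paper needs it only because its argument routes through $w_*$), so you actually prove a slightly stronger statement; and your passage to $t=0$ via lower semi-continuity of $J$ plus coercivity is more careful than the paper's implicit $R(f)=\Lambda(0)$, which tacitly assumes $J(f)<\infty$. Both arguments rest on the same two pillars --- the dissipation identity of Proposition~\ref{prop:diss_u_Lambda} and the bound $\Lambda\geq\lambda_1$ --- so neither is more general in substance, but yours is the more economical derivation.
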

\begin{proof}
From the previous corollary we infer that any asymptotic profile $w_*$ is a ground state and, hence, it holds according to Proposition~\ref{prop:char_ground_state_1} that $\tex=\frac{\norm{w_*}^{2-p}}{(2-p)\lambda_1}$. However, since $\norm{w_*}\leq\norm{f}$ holds and $t\mapsto\norm{w(t)}$ is non-increasing, we see that the extinction time is maximal if and only if $\norm{w(t)}=\norm{f}$ for all $t\geq 0$. From the proof of Lemma~\ref{lem:decreasing_profiles} we can infer that this is the case only if the upper bound \eqref{ineq:upper_bound_p<2} is sharp for all $t\geq 0$, meaning that
$$\norm{u(t)}^{2-p}=(2-p)\Lambda(t)(\tex-t).$$
This can be rewritten as
$$\norm{w(t)}^{2-p}=(2-p)\tex\Lambda(t)$$
and since the left hand side is constant, we end up with
$$\norm{f}^{2-p}=(2-p)\tex\Lambda(t)$$
which, finally, implies that $\Lambda(t)=\lambda_1$ for all $0\leq t<\tex$. Hence, the Rayleigh quotient of the datum $f$ is given by $R(f)=\Lambda(0)=\lambda_1$ which means that $f$ is a ground state.
\qed 
\end{proof}

Judging from these results one might think that the asymptotic profile can only be a ground state if the gradient flow is already initialized with a ground state. Fortunately, this is not the case as the following example shows. Also in general, there are situations where asymptotic profiles are always ground states, independent of the initialization (cf.~Remark~\ref{rem:ground_states_positivity} below).

\begin{example}[The spectral 1-homogeneous case]
We consider the absolutely 1-homogeneous case which was thoroughly investigated in \cite{bungert2019nonlinear}. Using the results there we can construct an example where the asymptotic profile is a ground state although the extinction time is not maximal. To this end, let $f=g+h$ be the datum where $g$ and $h$ have unit norm and satisfy $\lambda g\in\partial J(g)$ and $\lambda_1 h\in\partial J(h)$ with $\lambda>\lambda_1>0$. Furthermore, assume that $\langle g,h\rangle = 0$ and that $\lambda g+\lambda_1 h\in\partial J(0)$. For example, these assumptions apply to the total variation flow initialized with two calibrable sets \cite{alter2005characterization} which are non-overlapping. Under these conditions it was shown in \cite{bungert2019nonlinear} (see also \cite{schmidt2018inverse,bungert2019solution}) that the solution of the gradient flow has an explicit form given by
$$u(t)=
\begin{cases}
(1-\lambda t)g+(1-\lambda_1 t)h,\quad &0\leq t\leq\frac{1}{\lambda},\\
(1-\lambda_1 t)h,\quad &\frac{1}{\lambda}< t \leq\frac{1}{\lambda_1}.
\end{cases}
$$
Hence, we observe that $\tex=1/\lambda_1$ and that the rescaled solution $w(t)=u(t)/(1-\lambda_1 t)$ converges to $w_*=h$ which is a ground state. However, the extinction time does not attain its upper bound since
$$\tex=\frac{1}{\lambda_1}<\frac{\sqrt{2}}{\lambda_1}=\frac{\sqrt{\norm{g}^2+\norm{h}^2}}{\lambda_1}=\frac{\norm{f}}{\lambda_1}.$$

\end{example}

\subsection{Case II: Infinite Extinction Time}
\label{sec:infinite_time_ext}
Now we will turn to the case of infinite extinction time where it is less straightforward to find a suitable rescaling since -- due to the lack of a finite extinction time -- one has less information about the quantitative decay speed of solutions. In \cite{varvaruca2004exact} a rescaling of the form $u(t)/\norm{u(t)}$ was proposed and it was shown that (up to a subsequence) this converges to a non-trivial eigenfunction. However, since we are interested in the connection of asymptotic profiles and ground states (i.e., minimizers of \eqref{eq:first_eigenvalue}), we pursue a rescaling strategy which, similarly to the scenario of finite time extinction treated in Section~\ref{sec:finite_time_ext}, is based on \eqref{eq:characteristic_ODE}.

To make sure that the rescaling does not decay to quickly, which might lead to a blow up of the rescaled solutions, it is necessary to choose a rescaling with a sufficiently slow decay among the solutions of \eqref{eq:characteristic_ODE}.

We start with an illustrative example and consider the heat equation $\partial_tu(t,x)=\Delta u(t,x)$ with homogeneous Neumann conditions, which is the $L^2$-gradient flow of the Dirichlet energy $J(u)=\frac{1}{2}\int_\Omega|\nabla u|^2\d x$ and hence corresponds to a 2-homogeneous functional. Technical details aside, we assume that there is a orthonormal basis of eigenfunctions $(v_i)_{i\in\N}$ of the negative Laplace operator with eigenvalues $0<\lambda_1<\lambda_2<\dots$, meaning that $\lambda_i v_i=-\Delta v_i$ for all $i\in\N$. Straightforwardly, this gives rise to an explicit solution of the heat equation with initial data $f\in L^2(\Omega)$ as $u(t,x)=\sum_ic_i\exp(-\lambda_it)v_i(x)$, where $c_i=\langle f,v_i\rangle_{L^2(\Omega)}$ and we assume $c_1\neq 0$ for simplicity. Consequently, the rescaling $a(t)=\exp(-\lambda t)$ leads to
$$u(t,x)/a(t)\to c_1v_1(x),\quad t\to\infty,$$
if and only if $\lambda=\lambda_1$. For $\lambda>\lambda_1$ the rescaled solutions would blow up whereas they would converge to zero for $\lambda<\lambda_1$. Note that if $c_1=0$, meaning that the datum is orthogonal to the ground state $v_1$, the asymptotic profile is zero, as well. Nevertheless, the solution of $a'(t)=-\lambda_1 a(t),\;c(0)=1,$ seems to be a suitable rescaling if one is interested in asymptotic profiles which have the chance to be ground states. Also in the general $p$-homogeneous case with $p>2$ one would like to be able to isolate ground states. Hence, we study the case of a datum $f$ which fulfills $\lambda f\in\partial J(f)$ and take $a(t)$ as solution to \eqref{eq:characteristic_ODE} with $\lambda=\lambda_1\norm{f}^{p-2}$. According to \eqref{eq:u_eigenvector} this leads to 
\begin{align}\label{eq:extinct_eigenfunction}
\frac{u(t)}{a(t)}=\left(\frac{1+(p-2)\lambda_1\norm{f}^{p-2}t}{1+(p-2)\lambda t}\right)^\frac{1}{p-2}f\to
\left(\frac{\lambda_1\norm{f}^{p-2}}{\lambda}\right)^\frac{1}{p-2}f,\quad t\to\infty.
\end{align}
Now since according to Corollary~\ref{cor:rescaling_eigenvalues} it holds $\lambda\geq\lambda_1\norm{f}^{p-2}$, we see that the asymptotic profile yields back $f$ if this is a ground state, and a multiple with smaller norm else. Note that this behavior is less degenerate -- but also less discriminating -- than for $p=2$ since a non-zero asymptotic profile can be achieved even if the datum is orthogonal to all ground states. 

\begin{remark}[Other possible rescalings]
Except in the degenerate case $p=2$, any rescaling of the form $a(t)=t^{-\frac{1}{p-2}}$ guarantees that that the rescaled solutions $w(t)=u(t)/a(t)$ converge to eigenfunctions. However, in order to have a consistent presentation for all cases $p\geq 2$, we choose $a(t)$ as a suitable solution of \eqref{eq:characteristic_ODE} which has the same qualitative rate of decrease.
\end{remark}

As already indicated, our precise rescaling set-up for the case of infinite extinction time is the following:
\begin{align}
\lambda&:=\lambda_1\norm{f}^{p-2},\label{eq:lambda_inf}\\
a(t)&\text{ solves \eqref{eq:characteristic_ODE}},\notag\\
\label{eq:rescales_solutions_p>2}
w(t)&:=\frac{u(t)}{a(t)},\qquad t\geq 0.
\end{align}

The techniques to prove that rescaled solutions converge to eigenfunctions are analogous to the case of finite extinction time and build mainly on the dissipation Lemma~\ref{lem:dissipation} and boundedness.

\begin{lemma}[Bounds for rescaled solutions]\label{lem:bounds_p>2}
For $p>2$ and for all $\delta>0$ the map $t\mapsto\norm{w(t)}$ meets 
\begin{align}\label{ineq:bounds_p>2}
\frac{\lambda_1\norm{u(\delta)}^{p-2}}{\Lambda(\delta)}\leq\norm{w(t)}^{p-2}\leq\norm{f}^{p-2},\quad t\geq\delta,
\end{align}
where $\delta=0$ is admissible if $J(f)<\infty$. If $p=2$ the map is non-increasing and, in particular, it holds $\norm{w(t)}\leq\norm{w(0)}=\norm{f}$ for all $t\geq0$.
\end{lemma}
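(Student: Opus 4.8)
The plan is to establish the three inequalities in \eqref{ineq:bounds_p>2} one at a time, each time feeding the already-proven decay estimates into the explicit solution formula \eqref{eq:solution_ODE} for $a(t)$. For $p>2$ that formula reads $a(t)^{p-2}=\bigl(1+(p-2)\lambda t\bigr)^{-1}$, so with $\lambda=\lambda_1\norm{f}^{p-2}$ from \eqref{eq:lambda_inf} one has $\norm{w(t)}^{p-2}=\norm{u(t)}^{p-2}\bigl(1+(p-2)\lambda t\bigr)$. The \emph{upper} bound then falls out by inserting the general a priori estimate \eqref{ineq:decay_u_p>2}: the numerator $1+(p-2)\lambda_1\norm{f}^{p-2}t$ is exactly the factor one can pull out of the denominator $\norm{f}^{2-p}+(p-2)\lambda_1 t=\norm{f}^{2-p}\bigl(1+(p-2)\lambda_1\norm{f}^{p-2}t\bigr)$, leaving $\norm{w(t)}^{p-2}\le\norm{f}^{p-2}$ for every $t\ge0$ — no threshold $\delta$ is needed here.

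For the \emph{lower} bound I would instead use the a posteriori estimate of Corollary~\ref{cor:lower_bounds} (which itself rests on the monotonicity of $\Lambda$ from Proposition~\ref{prop:decrease_rayleigh}), namely $\norm{u(t)}^{p-2}\ge\bigl(\norm{u(\delta)}^{2-p}+(p-2)\Lambda(\delta)(t-\delta)\bigr)^{-1}$ for $t\ge\delta$, which gives
\[
\norm{w(t)}^{p-2}\ \ge\ \frac{1+(p-2)\lambda t}{\norm{u(\delta)}^{2-p}+(p-2)\Lambda(\delta)(t-\delta)}.
\]
It then remains to check that this rational expression never drops below $\lambda_1\norm{u(\delta)}^{p-2}/\Lambda(\delta)$. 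Clearing the (positive) denominator reduces this to the scalar inequality $\bigl(\Lambda(\delta)-\lambda_1\bigr)+(p-2)\Lambda(\delta)\bigl[\lambda t-\lambda_1\norm{u(\delta)}^{p-2}(t-\delta)\bigr]\ge0$. The first summand is $\ge0$ since $\Lambda(\delta)=R(u(\delta))\ge\lambda_1$ by \eqref{eq:first_eigenvalue}, and the bracket is $\ge\lambda\delta\ge0$ once one observes that $\norm{u(\delta)}\le\norm{f}$ along the flow (cf.\ \eqref{eq:diss_u}), hence $\lambda_1\norm{u(\delta)}^{p-2}\le\lambda_1\norm{f}^{p-2}=\lambda$. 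I expect this last point — recognising that monotone dissipation of the norm is precisely what converts the $\delta$-shifted a posteriori bound into the clean estimate, and keeping track of the shift correctly — to be the only real obstacle; everything else is substitution. The endpoint $\delta=0$ is admissible exactly when $J(f)<\infty$, because then $\Lambda(0)=R(f)$ is well defined, $\norm{u(0)}=\norm{f}$, and the bracket degenerates to $0$.

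For $p=2$ the rescaling is $a(t)=\exp(-\lambda_1 t)$, so $a(t)^{2-p}\equiv1$ and $\lambda=\lambda_1$. I would feed this into the dissipation identity \eqref{eq:dnormw} of Lemma~\ref{lem:dissipation}, which collapses to $\lambda_1\frac{\d}{\d t}\tfrac12\norm{w(t)}^2=\frac{\lambda_1}{a(t)^2}\norm{u(t)}^2\bigl(\lambda_1-\Lambda(t)\bigr)$; this is $\le0$ because $\Lambda(t)=2J(u(t))/\norm{u(t)}^2\ge\lambda_1$ by the coercivity \eqref{eq:first_eigenvalue}. Hence $t\mapsto\norm{w(t)}$ is non-increasing, and since $\norm{w(0)}=\norm{u(0)}/a(0)=\norm{f}$ this yields $\norm{w(t)}\le\norm{f}$ for all $t\ge0$. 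Note that the same identity does \emph{not} force monotonicity for $p>2$, since there the sign is governed by $\lambda-\Lambda(t)\norm{w(t)}^{p-2}$, which need not be one-signed — which is exactly why for $p>2$ only boundedness is claimed.
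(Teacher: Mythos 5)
Your proof is correct and follows the same route as the paper: for $p>2$ you combine the upper bound of Corollary~\ref{cor:upper_bounds} and the lower bound of Corollary~\ref{cor:lower_bounds} with the explicit formula \eqref{eq:solution_ODE} for $a(t)$ (you merely spell out the cross-multiplication and the use of $\norm{u(\delta)}\leq\norm{f}$ that the paper leaves implicit), and for $p=2$ you use \eqref{eq:dnormw} with $\lambda=\lambda_1$ and coercivity exactly as the paper does. No gaps.
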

\begin{proof}
For $p>2$ the bounds follow from Corollaries~\ref{cor:lower_bounds} and ~\ref{cor:upper_bounds} together with the definition of $a(t)$ in \eqref{eq:solution_ODE}. If $p=2$ we can use \eqref{eq:dnormw} and the fact that $\lambda=\lambda_1$ to obtain
\begin{align*}
\frac{\d}{\d t}\frac{1}{2}\norm{w(t)}^2=\frac{1}{a(t)^2}\left[\lambda\norm{u(t)}^2-\langle\sg(t),u(t)\rangle\right]\leq \frac{1}{a(t)^2}\left[2J(u(t))-2J(u(t))\right]=0,
\end{align*}
for almost all $t>0$ which implies that $t\mapsto\norm{w(t)}$ is non-increasing.
\qed 
\end{proof}

As before, dissipation plus the boundedness of the rescaled solutions imply the correct asymptotics for the subgradients $\sg(t)$:
\begin{corollary}
It holds $\frac{\sg(t)}{a(t)^{p-1}}-\lambda w(t)\to 0$ strongly as $t\to\infty$.
\end{corollary}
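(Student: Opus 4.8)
The plan is to mirror the argument from the finite-extinction case (the corollary proving \eqref{eq:convergence_p_w}), using the dissipation Lemma~\ref{lem:dissipation} together with the new boundedness Lemma~\ref{lem:bounds_p>2}. First I would integrate the dissipation identity \eqref{eq:diss_p-hom} from a fixed time $s>0$ to $t>s$, obtaining
\begin{align*}
J(w(t))-J(w(s))+\int_s^t\frac{1}{a(\tau)^{2-p}}\norm{\frac{\sg(\tau)}{a(\tau)^{p-1}}-\lambda w(\tau)}^2\d\tau=\lambda\left(\frac12\norm{w(t)}^2-\frac12\norm{w(s)}^2\right).
\end{align*}
The right-hand side is where the cases differ slightly from Section~\ref{sec:finite_time_ext}. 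For $p=2$ we have $a(t)^{2-p}\equiv1$ and Lemma~\ref{lem:bounds_p>2} gives that $t\mapsto\norm{w(t)}$ is non-increasing, so the right-hand side is $\leq0$; combined with $J\geq0$ this makes $\int_s^\infty\norm{\sg(\tau)/a(\tau)^{p-1}-\lambda w(\tau)}^2\d\tau$ finite. For $p>2$, the factor $1/a(\tau)^{2-p}=a(\tau)^{p-2}=(1+(p-2)\lambda\tau)^{-1}$ decays, so I would instead argue that $J(w(\tau))$ is bounded (from \eqref{eq:euler} and the upper bound $\norm{w(\tau)}^{p-2}\leq\norm{f}^{p-2}$ in \eqref{ineq:bounds_p>2}), hence the right-hand side minus $J(w(t))$ stays bounded above, which forces the (nonnegative, nondecreasing in $t$) integral term to converge as $t\to\infty$.

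The key mechanism is then the divergence of $\int^\infty a(\tau)^{p-2}\,\d\tau$ for $p\geq2$: for $p=2$ the integrand is $1$; for $p>2$ it is $(1+(p-2)\lambda\tau)^{-1}$, whose integral also diverges logarithmically. Since $\int_s^\infty\frac{1}{a(\tau)^{2-p}}\norm{\frac{\sg(\tau)}{a(\tau)^{p-1}}-\lambda w(\tau)}^2\d\tau<\infty$ while the weight $1/a(\tau)^{2-p}$ has divergent integral, the quantity $\norm{\frac{\sg(\tau)}{a(\tau)^{p-1}}-\lambda w(\tau)}^2$ cannot stay bounded away from zero, giving convergence along a subsequence. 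To upgrade this to full convergence as $t\to\infty$ (not merely along a subsequence), I would invoke local Lipschitz/absolute-continuity regularity of $t\mapsto w(t)$ and $t\mapsto\sg(t)$ coming from Theorem~\ref{thm:brezis} and the appendix identities (the same regularity already used for $\Lambda$), which controls oscillation of the integrand and promotes liminf-zero to limit-zero.

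The main obstacle I anticipate is precisely this last point: handling $p>2$, where neither a finite extinction time nor the monotonicity of $\norm{w(t)}$ is available, so the clean "$J\geq0$ plus $a^{2-p}\to0$" squeeze from Section~\ref{sec:finite_time_ext} does not apply verbatim. One must carefully combine the uniform bound on $J(w(t))$, the slow (logarithmic) divergence of the weight, and enough regularity of the integrand to exclude thin high spikes. I expect this to be dispatched by the same uniform continuity estimates that justify differentiability of $\Lambda$, so the proof should close by noting that a nonnegative uniformly continuous function whose integral against a weight of infinite mass is finite must tend to zero.
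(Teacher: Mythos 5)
Your argument is essentially the paper's own: integrate the dissipation identity \eqref{eq:diss_p-hom} from $s$ to $t$, control the right-hand side via Lemma~\ref{lem:bounds_p>2} (monotonicity of $\norm{w(t)}$ for $p=2$, boundedness for $p>2$), and conclude from finiteness of the weighted integral combined with the non-integrability of the weight $1/a(\tau)^{2-p}$ on $(s,\infty)$. In fact you are more precise than the paper at the last step: for $p>2$ the weight equals $(1+(p-2)\lambda\tau)^{-1}$ and \emph{does} tend to zero (the paper's parenthetical claim to the contrary is inaccurate there), so the operative fact is the logarithmic divergence of its integral, which by itself only yields $\liminf_{t\to\infty}\norm{\sg(t)/a(t)^{p-1}-\lambda w(t)}=0$, and the upgrade to a full limit requires the additional regularity/equicontinuity argument you sketch --- a point the paper's proof passes over in silence.
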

\begin{proof}
Integrating \eqref{eq:derivative_J} yields
$$J(w(t))-J(w(s))+\int_s^t\frac{1}{a(\tau)^{2-p}}\norm{\frac{\sg(\tau)}{a(\tau)^{p-1}}-\lambda w(\tau)}^2\d\tau=\frac{\lambda}{2}\Big(\norm{w(t)}-\norm{w(s)}\Big).$$
If we now let $t\to\infty$, use the boundedness from Lemma~\ref{lem:bounds_p>2}, and the fact that $1/a(\tau)^{2-p}$ does not tend to zero as $\tau\to\infty$, we can infer the statement.
\qed 
\end{proof}

From the boundedness of $\norm{w(t)}$ as $t\to\infty$ it follows again that, up to a subsequence, $w(t)$ converges weakly to some $w_*\in\H$ as $t\to\infty$. For $w_*$ to be an eigenfunction, again, strong convergence is required.

\begin{theorem}[Asymptotic profiles for infinite extinction]
Assume that $w(t)$ converges (possibly up to a subsequence) strongly to some $w_*\in\H$ as $t\to\infty$. Then it holds $\lambda w_*\in\partial J(w_*)$ and $w_*\neq 0$ if $p>2$.
\end{theorem}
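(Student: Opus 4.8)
The plan is to follow the blueprint of Theorem~\ref{thm:ext_prof} almost verbatim, since the only structural difference between the finite- and infinite-extinction cases is the origin of the two ingredients (boundedness of the rescaled solutions and strong convergence of the rescaled subgradients), both of which have already been supplied for $p\geq2$ by Lemma~\ref{lem:bounds_p>2} and by the Corollary immediately preceding the statement. First I would write down the subdifferential inequality
$$J(u(t))+\langle\sg(t),v-u(t)\rangle\leq J(v),\qquad\forall v\in\H,\ t>0,$$
which is an equivalent formulation of \eqref{gradflow}, divide it by $a(t)^p>0$, substitute $w(t)=u(t)/a(t)$, and invoke the absolute $p$-homogeneity \eqref{eq:p-hom_J} of $J$ to rewrite it as
$$J(w(t))+\left\langle\frac{\sg(t)}{a(t)^{p-1}},v-w(t)\right\rangle\leq J(v),\qquad\forall v\in\H,\ t>0.$$

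Next I would pass to the limit along the subsequence for which $w(t)\to w_*$ strongly. By the preceding Corollary we have $\frac{\sg(t)}{a(t)^{p-1}}-\lambda w(t)\to0$ strongly, hence $\frac{\sg(t)}{a(t)^{p-1}}\to\lambda w_*$ strongly, so for each fixed $v$ the inner product converges to $\langle\lambda w_*,v-w_*\rangle$. Taking the $\liminf$ on both sides and using the lower semi-continuity of $J$ (which gives $J(w_*)\leq\liminf_{t\to\infty}J(w(t))$) produces
$$J(w_*)+\langle\lambda w_*,v-w_*\rangle\leq J(v),\qquad\forall v\in\H,$$
that is, $\lambda w_*\in\partial J(w_*)$.

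For the nontriviality when $p>2$ I would simply pass to the limit in the lower bound of Lemma~\ref{lem:bounds_p>2}: for any admissible $\delta$ one has $\norm{w(t)}^{p-2}\geq\lambda_1\norm{u(\delta)}^{p-2}/\Lambda(\delta)>0$ for $t\geq\delta$, and strong convergence yields $\norm{w_*}^{p-2}\geq\lambda_1\norm{u(\delta)}^{p-2}/\Lambda(\delta)>0$, so $w_*\neq0$. This is exactly the step where $p=2$ must be excluded: there the best available bound is only $\norm{w(t)}\leq\norm{f}$, with no matching positive lower bound, and the heat-equation discussion above shows $w_*=0$ genuinely occurs when $f$ is orthogonal to all ground states. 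I do not anticipate a real obstacle anywhere; the only points requiring a little care are that strong (not merely weak) convergence of $w(t)$ is essential both for passing to the limit in the inner product and for applying lower semi-continuity, and that the term $J(w(t))$ appearing on the left under a $\liminf$ is harmless since it only relaxes the inequality in the favourable direction.
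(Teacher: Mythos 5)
Your proposal is correct and coincides with the paper's argument: the paper proves this theorem by stating that it "works exactly as the one of Theorem~\ref{thm:ext_prof}" (i.e.\ dividing the subdifferential inequality by $a(t)^p$, using $p$-homogeneity, and passing to the limit via strong convergence, the corollary on the subgradients, and lower semi-continuity), with $w_*\neq 0$ for $p>2$ coming from the lower bound in \eqref{ineq:bounds_p>2}. You have simply written out the details that the paper leaves implicit, and your remarks on where $p=2$ fails match the paper's own discussion.
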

\begin{proof}
The proof works exactly as the one of Theorem~\ref{thm:ext_prof}. Furthermore, the lower bound in~\eqref{ineq:bounds_p>2} implies that $w_*\neq 0$.
\qed 
\end{proof}

\begin{remark}
We have already seen in the introductory examples of this section that for $p=2$ one cannot expect to have a non-trivial asymptotic profile, in general. Corollary~\ref{cor:lower_bounds} may state that $\norm{u(t)}$ does not decrease faster than an exponential, however, the estimate is very pessimistic since for $0<\delta\leq t$ it implies
$$\norm{w(t)}^2=\frac{\norm{u(t)}^2}{a(t)^2}\geq\norm{u(\delta)}^2\exp(\delta\Lambda(\delta))\exp\left(-2\left[\Lambda(\delta)-\lambda_1\right]t\right) $$
and the right hand side always tends to zero as $t\to\infty$ if $f=u(0)$ was not a ground state already which implies $\Lambda(t)=\lambda_1$ for all $t>0$.
\end{remark}

Next we investigate when $w_*$ is a ground state. Opposed to the case of finite extinction time, this is the case if and only if the upper bound \eqref{ineq:bounds_p>2} for the rescaled solutions is asymptotically sharp.

\begin{proposition}
If $p=2$ then $w_*$ is a non-trivial ground state or $w_*=0$. For $p>2$ it holds $\norm{w_*}\leq\norm{f}$ with equality if and only if $w_*$ is a ground state.
\end{proposition}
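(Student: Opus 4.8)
The plan is to read off the inequality $\norm{w_*}\le\norm{f}$ directly from Lemma~\ref{lem:bounds_p>2} and to characterise the equality case by translating ``$w_*$ is a ground state'' into a statement about eigenvalues via Corollary~\ref{cor:rescaling_eigenvalues} and Proposition~\ref{prop:critical_rayleigh}. Recall that the rescaling is set up with $\lambda=\lambda_1\norm{f}^{p-2}$ and that the previous theorem already provides $\lambda w_*\in\partial J(w_*)$, with $w_*\neq 0$ in the case $p>2$.

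For $p>2$ I would first pass to the limit $t\to\infty$ in the upper bound $\norm{w(t)}^{p-2}\le\norm{f}^{p-2}$ of \eqref{ineq:bounds_p>2}; strong convergence of $w(t)$ together with $p-2>0$ gives $\norm{w_*}\le\norm{f}$. Since $\lambda w_*\in\partial J(w_*)$ with $\lambda\ge 0$ and $w_*\in\H_0$ (Proposition~\ref{prop:orth}), Corollary~\ref{cor:rescaling_eigenvalues} shows that $w_*$ is an eigenfunction in the sense of Definition~\ref{def:rigorous_eigenfunctions} with eigenvalue $\tilde\lambda=\lambda/\norm{w_*}^{p-2}=\lambda_1\big(\norm{f}/\norm{w_*}\big)^{p-2}$. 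By Proposition~\ref{prop:critical_rayleigh} this eigenvalue is exactly $R(w_*)$, hence $\tilde\lambda\ge\lambda_1$ by \eqref{eq:first_eigenvalue}; this re-derives $\norm{w_*}\le\norm{f}$ and, more importantly, shows that $\norm{w_*}=\norm{f}$ holds if and only if $\tilde\lambda=R(w_*)=\lambda_1$, i.e.\ if and only if $w_*$ minimises the Rayleigh quotient, which is precisely the definition of a ground state.

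For $p=2$ the argument is the same with the simplification $\norm{f}^{p-2}=1$, so that $\lambda=\lambda_1$ and the normalising factor $\norm{w_*}^{p-2}$ is trivial. If $w_*=0$ there is nothing to show; otherwise $\lambda_1 w_*\in\partial J(w_*)$ is itself the eigenfunction relation of Definition~\ref{def:rigorous_eigenfunctions} with eigenvalue $\lambda_1$, and Proposition~\ref{prop:critical_rayleigh} yields $R(w_*)=\lambda_1$, so $w_*$ is a ground state. This produces the stated dichotomy.

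The proof is essentially bookkeeping and I do not anticipate a genuine obstacle; the only points that need a little care are isolating the trivial alternative $w_*=0$ in the degenerate case $p=2$ (where the lower bound of \eqref{ineq:bounds_p>2} is not available), and observing that for $p>2$ it is precisely the non-triviality $w_*\neq 0$, already secured in the preceding theorem, that makes Corollary~\ref{cor:rescaling_eigenvalues} applicable and forces the eigenvalue estimate $\tilde\lambda\ge\lambda_1$.
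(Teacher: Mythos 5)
Your argument is correct and is essentially the paper's own proof: the paper simply evaluates $\lambda_1\leq pJ(w_*)/\norm{w_*}^p$ using $\lambda w_*\in\partial J(w_*)$ (via the Euler identity) and the definition $\lambda=\lambda_1\norm{f}^{p-2}$, which is exactly the eigenvalue computation you route through Corollary~\ref{cor:rescaling_eigenvalues} and Proposition~\ref{prop:critical_rayleigh}. The extra care you take with the $w_*=0$ alternative for $p=2$ and with non-triviality for $p>2$ is consistent with the surrounding results and does not change the approach.
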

\begin{proof}
The statement follows by evaluating $\lambda_1\leq J(w_*)/\norm{w_*}^p$, using that $\lambda w_*\in\partial J(w_*)$, and the definition of $\lambda$ in \eqref{eq:lambda_inf}.
\qed 
\end{proof}

As in the case of finite extinction time, we get
\begin{corollary}
For $p>2$ an asymptotic profile $w_*$ is a ground state if and only if $\lim_{t\to\infty}\Lambda(t)=\lambda_1$.
\end{corollary}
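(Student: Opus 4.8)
The plan is to mimic the argument of Corollary~\ref{cor:char_ground_state_II} from the finite-extinction case, replacing the integration over the finite interval $(t,\tex)$ by a limit as the upper endpoint tends to infinity. First note that $\Lambda$ is non-increasing by Proposition~\ref{prop:decrease_rayleigh} and bounded below by $\lambda_1$ (by the very definition \eqref{eq:first_eigenvalue} of $\lambda_1$), so the limit $\Lambda_\infty:=\lim_{t\to\infty}\Lambda(t)$ exists and satisfies $\Lambda_\infty\geq\lambda_1$. The whole statement will then reduce to expressing $\norm{w_*}$ through $\Lambda_\infty$.

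Next I would integrate the dissipation identity \eqref{eq:diss_u_Lambda} from Proposition~\ref{prop:diss_u_Lambda} (the case $p\neq 2$) over an interval $(\delta,t)$ with $0<\delta<t$, which gives
$$\norm{u(t)}^{2-p}=\norm{u(\delta)}^{2-p}+(p-2)\int_\delta^t\Lambda(s)\d s.$$
Since for $p>2$ the solution of \eqref{eq:characteristic_ODE} is $a(t)=(1+(p-2)\lambda t)^{1/(2-p)}$ by \eqref{eq:solution_ODE}, we have $a(t)^{2-p}=1+(p-2)\lambda t$, and dividing yields
$$\norm{w(t)}^{2-p}=\frac{\norm{u(\delta)}^{2-p}+(p-2)\int_\delta^t\Lambda(s)\d s}{1+(p-2)\lambda t}.$$
Now I would let $t\to\infty$. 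On the left, strong convergence $w(t)\to w_*$ together with $w_*\neq 0$ (valid for $p>2$) gives $\norm{w(t)}^{2-p}\to\norm{w_*}^{2-p}$. On the right, the fixed constant $\norm{u(\delta)}^{2-p}$ becomes negligible against the denominator, and $\frac1t\int_\delta^t\Lambda(s)\d s\to\Lambda_\infty$ because $\Lambda$ is monotone and convergent; hence the right-hand side tends to $\Lambda_\infty/\lambda$. With $\lambda=\lambda_1\norm{f}^{p-2}$ from \eqref{eq:lambda_inf} this produces
$$\norm{w_*}^{2-p}=\frac{\Lambda_\infty}{\lambda_1\norm{f}^{p-2}}.$$

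Finally I would invoke the Proposition immediately preceding this Corollary: for $p>2$, $w_*$ is a ground state if and only if $\norm{w_*}=\norm{f}$, equivalently (since $x\mapsto x^{2-p}$ is injective on $(0,\infty)$) if and only if $\norm{w_*}^{2-p}=\norm{f}^{2-p}$. By the identity just derived, this is in turn equivalent to $\Lambda_\infty=\lambda_1\norm{f}^{p-2}\norm{f}^{2-p}=\lambda_1$, which is exactly the assertion. The only step that needs a little care is the limit of the quotient on the right-hand side, but this is harmless: since $\Lambda$ is monotone, the Cesàro mean $\frac1t\int_\delta^t\Lambda(s)\d s$ converges to $\Lambda_\infty$ (one may equivalently invoke l'Hôpital's rule for the $\infty/\infty$ form), and this is precisely where the auxiliary constant $\norm{u(\delta)}^{2-p}$ — which we had to carry along because $J(f)$ may be infinite, so that \eqref{eq:diss_u_Lambda} is only available on $(0,\infty)$ — disappears in the limit.
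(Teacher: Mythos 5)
Your proof is correct and is precisely the argument the paper intends: the corollary is stated with the remark ``As in the case of finite extinction time, we get,'' i.e., the reader is meant to repeat the integration of \eqref{eq:diss_u_Lambda} from Corollary~\ref{cor:char_ground_state_II}, and your computation (integrating on $(\delta,t)$, dividing by $a(t)^{2-p}=1+(p-2)\lambda t$, taking the Ces\`{a}ro limit, and invoking the preceding proposition) is exactly that adaptation to $p>2$. No gaps.
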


\begin{remark}
It seems tempting to always use 2-homogeneous functionals in order to compute ground states since according to the previous proposition their asymptotic profile is either zero or a non-trivial ground state. This, however, does not do the trick as the following argument shows. Assume that $f$ is an eigenfunction but no ground state of the 1-homogeneous functional $J$, meaning that $\lambda\frac{f}{\norm{f}}\in\partial J(f)$ with $\lambda>\lambda_1$. With the chain rule it is easy to see that $\lambda^2 f\in\partial\tilde{J}(f)$ where $\tilde{J}(\cdot)=J(\cdot)^2/2$ is absolutely 2-homogeneous and convex. Furthermore, the minimum of the Rayleigh quotient with respect to $\tilde{J}$ is given by $\tilde{\lambda}_1=\lambda_1^2$. Hence, the solution of the gradient flow of $\tilde{J}$ is given by $\tilde{u}(t)=\exp(-\lambda^2 t)f$ and the corresponding rescalings $\tilde{w}(t)=\tilde{u(t)}/\exp(-\tilde{\lambda_1}t)$ converge to zero since $\lambda>\lambda_1$.   
\end{remark}

\subsection{Criteria for Existence and Uniqueness of Asymptotic Profiles}
\label{sec:criteria}
In this section we briefly collect some sufficient conditions which ensure the existence of asymptotic profiles, i.e., the strong convergence of a subsequence of $w(t)$ to some $w_*$ as $t\to\tex$, as well as uniqueness of the profile, meaning that the whole sequence converges to a unique profile. 

\begin{theorem}[Existence]\label{thm:existence}
Asymptotic profiles exist if one of the following scenarios is met:
\begin{enumerate}
\item $\H$ is finite-dimensional
\item $\dom(J)$ is a Banach space which is compactly embedded in $\H$ 
\item $\dom(J)$ is a Banach space which is compactly embedded in some Banach space $\V$. Furthermore, the gradient flow \eqref{gradflow} provides enough regularity such that strong convergence of $w(t)$ in $\V$ implies strong convergence in $\H$.
\end{enumerate}
\end{theorem}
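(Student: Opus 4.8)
The plan is to reduce all three scenarios to one extraction-of-a-convergent-subsequence argument, the only preparatory work being to show that the rescaled curve $t\mapsto w(t)$ is bounded in the ``energy space'' $\dom(J)$, not merely in $\H$. Boundedness in $\H$ is already available: Lemma~\ref{lem:boundedness_resc} in the regime $p<2$ and Lemma~\ref{lem:bounds_p>2} in the regime $p\geq 2$ both give $\sup_{t}\norm{w(t)}<\infty$. To bound the energy I would exploit the $p$-homogeneity of $J$ once more: from $J(w(t))=J(u(t))/a(t)^p=\tfrac1p\,\Lambda(t)\norm{w(t)}^p$, with $\Lambda$ as in \eqref{eq:Lambda}, and from the monotonicity of $\Lambda$ established in Proposition~\ref{prop:decrease_rayleigh}, one obtains $J(w(t))\leq\tfrac1p\,\Lambda(t_0)\,\sup_s\norm{w(s)}^p$ for every $t\geq t_0>0$; hence $t\mapsto J(w(t))$ stays bounded as $t\to\tex$ (its behaviour on an initial interval being irrelevant to the asymptotics).

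In scenario~1 the boundedness of $\norm{w(t)}$ in the finite-dimensional space $\H$ is enough: given any sequence $t_n\to\tex$, the Bolzano--Weierstrass theorem produces a subsequence along which $w(t_n)$ converges, and in finite dimensions this convergence is automatically strong, which is exactly what an asymptotic profile requires. In scenario~2 I would endow $\dom(J)$ with a norm equivalent to $J(\cdot)^{1/p}+\norm{\cdot}$ --- the standard Banach-space structure of the energy space, noting that $J^{1/p}$ is a seminorm because convexity and $p$-homogeneity make its unit sublevel set convex --- so that the two a priori bounds from the first paragraph translate into boundedness of $\{w(t)\}$ in $\dom(J)$. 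The compact embedding $\dom(J)\hookrightarrow\H$ then yields, from any sequence $t_n\to\tex$, a subsequence with $w(t_{n_k})\to w_*$ strongly in $\H$. Scenario~3 proceeds identically until the last step: boundedness in $\dom(J)$ and compactness of $\dom(J)\hookrightarrow\V$ give a subsequence converging strongly in $\V$, and the assumed extra regularity of the flow promotes this to strong convergence in $\H$.

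The only point requiring a little care is the interface between the functional $J$ and the Banach-space norm on $\dom(J)$ in scenarios~2 and~3: one must make sure that this norm is controlled by $J(\cdot)^{1/p}+\norm{\cdot}$, so that the bounds on $J(w(t))$ and $\norm{w(t)}$ really do confine $w(t)$ to a bounded set of $\dom(J)$. In every concrete instance we have in mind --- say $\dom(J)=W^{1,p}(\Omega)$ for the $p$-Laplacian flow or $\dom(J)=BV(\Omega)$ for total variation flow --- this is precisely the assertion that $J^{1/p}$ together with the ambient $\H$-norm generates the norm of the energy space, so the requirement is mild; with it in place, everything else is the routine compactness extraction described above.
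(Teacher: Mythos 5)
Your argument is correct and is precisely the standard compactness argument that the paper leaves implicit (Theorem~\ref{thm:existence} is stated without proof there): the $\H$-bounds from Lemmas~\ref{lem:boundedness_resc} and~\ref{lem:bounds_p>2} together with $J(w(t))=\tfrac1p\Lambda(t)\norm{w(t)}^p$ and the monotonicity of $\Lambda$ give boundedness of $w(t)$ in the energy space, after which Bolzano--Weierstrass (scenario~1) or the compact embedding (scenarios~2 and~3) extracts a strongly convergent subsequence. The one caveat you flag --- that the Banach norm on $\dom(J)$ must be controlled by $J(\cdot)^{1/p}+\norm{\cdot}$ so that the two a priori bounds actually confine $w(t)$ to a bounded set of $\dom(J)$ --- is indeed the tacit assumption behind the theorem's phrasing, and your observation that $J^{1/p}$ is a seminorm (nonnegativity of $J$ following from convexity and absolute homogeneity) makes this precise; it holds in all the intended examples.
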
 

\begin{example}\label{ex:existence}
The second item in Theorem~\ref{thm:existence} applies for instance if $\dom(J)=W^{1,p}(\Omega)$ under sufficient regularity of the domain $\Omega\subset\R^n$. Here one has the compact embedding $W^{1,p}(\Omega)\Subset L^q(\Omega)$ for all $q<np/(n-p)$  such that for $p\in(2n/(n+2),n)$ one has a compact embedding in $\H=L^2(\Omega)$. If however $p$ is too small or one even has the case $\dom(J)=BV(\Omega)$, the third item applies. One can employ a maximum principle or $L^q$-regularity of the corresponding flow together with suitable initial conditions, to infer that the compact embedding into a space larger than $L^2(\Omega)$ is in fact enough to guarantee the strong convergence in $L^2(\Omega)$ (cf.~\cite{andreu2002some} for the total variation flow).
\end{example}

For a uniqueness proof, we have to assume a partial order on the Hilbert space. Not that with uniqueness we \emph{do not} mean that every initial datum $f$ gives rise to the same asymptotic profile. Instead we state a condition which ensures that the gradient flow with respect to a fixed initial datum yields at most one asymptotic profile, meaning that the rescaled solutions $w(t)$ do not possess more than one accumulation point as $t\to\tex$.

\begin{theorem}[Uniqueness of the profile]\label{thm:uniqueness}
Assume that $(\H,\geq)$ is a partially ordered space and the initial datum meets $f\geq0$. Then the solution $u(t)$ of \eqref{gradflow} has at most one asymptotic profile.
\end{theorem}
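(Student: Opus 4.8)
The plan is to exploit the fact that the gradient flow of a convex functional is order-preserving (comparison principle) together with the homogeneity, in order to show that $t\mapsto w(t)$ converges monotonically in the order $\geq$ and hence cannot have two distinct accumulation points. First I would record that since $f\geq 0$ and $0$ solves the flow with datum $0$, the comparison principle for the resolvent $(\id+\tau\partial J)^{-1}$ (or equivalently for the semigroup generated by $-\partial J$) gives $u(t)\geq 0$ for all $t\in[0,\tex)$; this is where the partial-order hypothesis enters. Consequently $w(t)=u(t)/a(t)\geq 0$ as well, so any accumulation point $w_*$ satisfies $w_*\geq 0$.

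The key step is a monotonicity argument for the rescaled flow. I would fix $0\le s<t<\tex$ and compare $u(t)$ with the solution of the flow started at time $s$ from the datum $\mu\,u(s)$ for an appropriate scalar $\mu=a(t)/a(s)\in(0,1]$; because eigenfunctions evolve self-similarly via \eqref{eq:characteristic_ODE}, the rescaling $a(\cdot)$ is exactly tuned so that $\mu u(s)$ started at time $s$ is, in the eigenfunction case, a \emph{subsolution} (or supersolution) relative to $u(\cdot)$. More precisely, using that $a$ solves \eqref{eq:characteristic_ODE} one checks that the function $v(\tau):=a(\tau)/a(s)\cdot u(s)$ satisfies $\partial_\tau v+\partial J(v)\ni (\text{sign-definite remainder})$ on $[s,t]$, and by the comparison principle for sub/supersolutions this forces $u(t)\le a(t)/a(s)\,u(s)$, i.e.
\begin{align}
w(t)\le w(s),\qquad 0\le s<t<\tex,
\end{align}
in the order $\geq$. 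Thus $t\mapsto w(t)$ is \emph{nonincreasing} and bounded below by $0$, which we already know; a nonincreasing net in a partially ordered Hilbert space that has a weakly convergent subsequence must converge (in the order, hence weakly, and then, under the strong-convergence hypothesis inherited from Section~\ref{sec:criteria}, strongly) to a unique limit. Therefore the rescaled solutions $w(t)$ have at most one accumulation point, which is the asserted uniqueness.

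The main obstacle I expect is making the comparison step rigorous: one must justify that the order-preserving property of $\partial J$ on $\H$ (an abstract order, not necessarily a lattice order compatible with the inner product) genuinely transfers to the rescaled equation, and that the "remainder term" produced by inserting $a(\tau)/a(s)\cdot u(s)$ into the flow has a definite sign along the whole solution — this is clean when $u(s)$ is an eigenfunction but needs an approximation or a differential-inequality argument (à la Gronwall in the order) for general $u(s)$. A secondary technical point is upgrading order-monotone convergence to strong convergence in $\H$: here I would lean on the compactness/regularity assumptions already invoked for existence of profiles in Theorem~\ref{thm:existence}, so that monotonicity pins down \emph{which} accumulation point is selected and rules out oscillation between two profiles. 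Everything else — nonnegativity of $w_*$, boundedness of $\norm{w(t)}$, and the identification of limits as eigenfunctions — is available from the earlier results.
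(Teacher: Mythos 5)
Your route is genuinely different from the paper's, and its central step fails. The claim that $w(t)\le w(s)$ for $s<t$, i.e.\ that the rescaled solution is nonincreasing in the partial order, is false in general. Take the Dirichlet heat equation ($p=2$, $J(u)=\frac12\int_\Omega|\nabla u|^2\dx$ on $H^1_0(\Omega)$, so $\calN(J)=\{0\}$) with $f=v_1+\eps v_2\ge 0$ for small $\eps>0$, where $v_1>0$ is the principal eigenfunction and $v_2$ the sign-changing second one. Then $w(t)=e^{\lambda_1 t}u(t)=v_1+\eps e^{-(\lambda_2-\lambda_1)t}v_2$, so $\partial_t w(t)=-\eps(\lambda_2-\lambda_1)e^{-(\lambda_2-\lambda_1)t}v_2$ is strictly positive on the set where $v_2<0$; hence $w$ is not order-nonincreasing even though the profile $v_1$ is unique. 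The same computation shows that $v(\tau)=\frac{a(\tau)}{a(s)}u(s)$ is neither a sub- nor a supersolution relative to $u$ for general data, so the comparison step cannot be repaired by approximation: it only works when $u(s)$ is already an eigenfunction, which begs the question. Two further ingredients you invoke are also not available in the abstract setting: order-preservation of the semigroup (the theorem only assumes $(\H,\ge)$ is partially ordered and $f\ge0$), and the assertion that an order-monotone, weakly precompact net converges (this needs a closed, normal positive cone).

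The paper's argument is different in both mechanism and direction of monotonicity. For $p>2$ it uses the B\'enilan--Crandall regularizing estimate for positive solutions of homogeneous flows, $\partial_t u(t)\ge -\frac{1}{p-2}\frac{u(t)}{t}$, an inequality along a single trajectory rather than a comparison of two solutions; it yields that $\tilde w(t):=t^{1/(p-2)}u(t)$ is \emph{nondecreasing} in the order --- the opposite monotonicity to yours --- hence has at most one limit, and since $\norm{w(t)-c\,\tilde w(t)}\to 0$ for a suitable constant $c=c(p,\lambda)$ the limit of $w(t)$ is unique as well. For $p\le 2$ no such estimate helps directly near the finite extinction time, so the paper passes to the $q$-homogeneous functional $\mathcal{J}=J^{r}/r$ with $r=q/p$ and $q>2$, whose gradient flow is a time reparametrization $u(\varphi(\tau))$ of the original one with $\varphi^{-1}(t)\to\infty$ as $t\to\tex$; uniqueness for that ($q>2$) flow together with the fact that $\norm{w(t)}$ has a unique limit then transfers back to $w(t)$. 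If you want to salvage a comparison-based proof you would at minimum need to assume the semigroup is order-preserving and identify a monotone quantity with the correct sign; the B\'enilan--Crandall inequality is exactly the tool that supplies it.
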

\begin{proof}
Crucial is that $u(t)\geq 0$ for all $t\geq 0$. The proof techniques for $p\leq 2$ and $p>2$ are somewhat different. We begin with the latter case. A classical result for homogeneous evolution equations \cite{crandall1980regularizing} states that for positive solutions one has
$$\partial_tu(t)\geq-\frac{1}{p-2}\frac{u(t)}{t}.$$
This estimate immediately implies that for $\tilde{w}(t):=t^\frac{1}{p-2}u(t)$ it holds 
$$\partial_t\tilde{w}(t)=\frac{1}{p-2}t^{\frac{1}{p-2}-1}u(t)+t^\frac{1}{p-2}\partial_t u(t)\geq 0.$$
Hence, $\tilde{w}(t)$ increases monotonously and, thus, has at most one limit $\tilde{w}_*$. Since, however, 
$$\norm{w(t)-c\tilde{w}(t)}\to 0,\quad t\to\infty$$ 
for a suitable $c=c(p,\lambda)$, one obtains that $w(t)$ has a unique limit $w_*=c\tilde{w}_*$, as well.

In the case $p\leq 2$ we utilize a simple transformation in order to exploit the uniqueness of the asymptotic profile in the case $p>2$. Without loss of generality, we assume that $f\in\dom(\partial J)$ which implies that $t\mapsto J(u(t))$ is globally Lipschitz continuous on the interval $(0,\infty)$. We let $q>2p\geq 2$ and define $r:=q/p>2$. Then the functional $\mathcal{J}(u):=J(u)^r/r$ is absolutely $q$-homogeneous, convex, and lower semi-continuous. Furthermore, by application of the chain rule, one can easily see that the solution of the gradient flow with respect to $\cal J$ and datum $f$ is given by $\tilde{u}(\tau)=u(\varphi(\tau))$, where $u$ solves the gradient flow with respect to $J$ and datum $f$ and the time reparametrization $\varphi$ is defined by the initial value problem
$$
\begin{cases}
\varphi'(\tau)&=J(u(\varphi(\tau)))^{r-1},\quad\tau>0,\\
\varphi(0)&=0.
\end{cases}
$$
Existence and uniqueness of $\varphi$ on the positive reals follow from standard arguments utilizing that $t\mapsto J(u(t))^{r-1}$ is globally Lipschitz continuous. To see this it is sufficient to note that this map is Lipschitz continuous on $[0,
\tex]$ and zero on $(\tex,\infty)$. Furthermore, $\tau\mapsto\varphi(\tau)$ is invertible on the interval $(0,\tex)$ since it is strictly increasing, and it holds $\varphi^{-1}(t)\to\infty$ as $t\to\tex$. The latter statement follows from the fact that there is no $\tau>0$ such that $\varphi(\tau)=\tex$ since otherwise $\tilde{u}$ would have finite extinction time which is impossible according to Theorem~\ref{thm:infinite_ext}. 

From the first part of the theorem we know that the rescalings of $\tilde{u}$, given by 
$$\tilde{w}(\tau)=\frac{\tilde{u}(\tau)}{(1-(2-q)\lambda_q \tau)^\frac{1}{2-q}}$$
where $\lambda_q$ is defined by \eqref{eq:lambda_inf} for $p=q$, have at most one limit and without loss of generality we assume that $\tilde{w}(\tau)$ converges to some $\tilde{w}_*$ as $\tau\to\infty$. Consequently, we have for $t\in(0,\tex)$ that
\begin{align}\label{eq:transformation_profiles}
w(t)=\frac{u(t)}{a(t)}=\frac{\tilde{u}(\varphi^{-1}(t))}{(1-(2-q)\lambda_q \varphi^{-1}(t))^\frac{1}{2-q}}\frac{(1-(2-q)\lambda_q \varphi^{-1}(t))^\frac{1}{2-q}}{a(t)}
\end{align}
where $a(t)$ is given by \eqref{eq:rescaling_p-hom}. Now the first term on the right hand side converges to $\tilde{w}_*$ since $\varphi^{-1}(t)\to\infty$ as $t\to\tex$. Furthermore, we know from Lemmas~\ref{lem:decreasing_profiles} and \ref{lem:bounds_p>2} that $\norm{w(t)}$ is decreasing and hence has a unique limit as $t\to\tex$. This implies that the second term in \eqref{eq:transformation_profiles}, and consequently also $w(t)$, has a unique limit as $t\to\tex$. 
\qed 
\end{proof}

\begin{remark}[Ground states and positivity]\label{rem:ground_states_positivity}
The concept of positivity can also be used to investigate whether asymptotic profiles are ground state. If, for instance, it is know that all eigenfunctions apart from the ground state change sign, then any asymptotic profile coincides with a ground state if the initialization is chosen non-negative. This applies, for instance, to the $p$-Laplacian operators ($1<p<\infty$) with homogeneous Dirichlet boundary conditions (cf.~\cite{kawohl2006positive}, for instance). If, in addition, the ground state is known to be simple, meaning unique up to scalar multiplication, then we even have uniqueness of the asymptotic profile irrespective of the chosen initialization $f\geq 0$. Hence, asymptotic profiles generalize other methods for finding ground states of nonlinear operators like \cite{bozorgnia2016convergence}, for instance.
\end{remark}

\section{Evolution Equations with  Monotone Operators}
\label{sec:operator}
In this section, we give an outlook on how our results for homogeneous gradient flows can be transferred to evolution equations governed by a general homogeneous operator, which is not necessarily the subdifferential of an homogeneous functional. 

More precisely, we study the more general evolution equation
\begin{align}\label{op_flow}\tag{OF}
\begin{cases}
\partial_t u(t)+\A u(t)\ni 0,\\
u(0)=f,
\end{cases}
\end{align}
where $\A:\H\rightrightarrows\H$ is a maximally monotone and $(p-1)$-homogeneous operator for $p\in[1,\infty)$, meaning that $\A(cu)=c|c|^{p-2}\A u$ holds for $u\in\H$ and $c\in\R\setminus\{0\}$. In analogy to \eqref{eq:subgrad_minimal_norm}, we define the single-valued operator
\begin{align}\label{eq:operator_minimal_norm}
\A^0:\H\to\H,\quad u\mapsto\A^0u:=\argmin\{\norm{v}\st v\in\A u\}
\end{align}
and remark that the solution $u(t)$ of \eqref{op_flow} is right-differentiable for every $t>0$ with $\partial_t^+u(t)=-\A^0u(t)$, just as in Theorem~\ref{thm:brezis} which dealt with the case $\A=\partial J$.

However, a fundamental difference to the gradient flow theory is that, in order to ensure well-posedness of \eqref{op_flow} by means of Brezis' theory \cite{brezis1973ope}, the initial datum $f$ is assumed to lie in the domain of $\A$ given by $\dom(\A):=\left\lbrace u\in\H\st\A u\neq\emptyset\right\rbrace$. This poses already a severe restriction since typically the domain of $\A$ is a proper subset of the ambient Hilbert space $\H$. However, in the case $p=1$ it was shown recently in \cite{hauer2019regularizing} that \eqref{op_flow} is well-defined for general $f\in\H$. 

If one assumes that the operator $\A^0$ fulfills a coercivity condition of the form 
\begin{align}\label{eq:first_eigenvalue_operator}
\inf_{u\in\H}\frac{\langle\A^0 u,u\rangle}{\norm{u}^p}=\lambda_1>0,
\end{align}
which is the analogue to \eqref{eq:first_eigenvalue} in the gradient flow case, one infers by similar computations as in Section~\ref{sec:times_and_rates} that the solution of \eqref{op_flow} converges to zero with explicit upper bounds and finite extinction times. 

For a compact notation we define a surrogate function 
\begin{align}\label{eq:surrogate}
h(t):=\langle\A^0 u(t),u(t)\rangle,\quad t>0,
\end{align}
which fulfills $h(t)=pJ(u(t))$ in the case of a gradient flow of a absolutely $p$-homogeneous functional $J$. Since $t\mapsto\A^0u(t)$ is right-continuous and $t\mapsto u(t)$ is Lipschitz continuous, it follows that $h$ is a priori is only right-continuous. However, $h$ can be more regular as it can be seen from the gradient flow case. It is crucial to give sense to $h'(t)$ in order to study the asymptotic behavior of the solutions of \eqref{op_flow}.

Remember that the main ingredient of our gradient flow analysis was the dissipation Lemma~\ref{lem:dissipation}, which studied the evolution of $t\mapsto J(w(t))$ where $w(t)=u(t)/a(t)$ is the rescaled solution. In the present case of an operator without subdifferential structure, we can study the evolution of $H(t):=h(t)/a(t)^p$ which equals $pJ(w(t))$ if $\A=\partial J$. We obtain the following generalization of Lemma~\ref{lem:dissipation}:
\begin{lemma}[Dissipation]\label{lem:dissipation_operator}
Assume that $h$ is differentiable almost everywhere in $(0,\tex)$, $u(t)$ solves \eqref{op_flow} and $a(t)$ is a solution of \eqref{eq:characteristic_ODE} with $\lambda>0$. Letting $w(t):=u(t)/a(t)$ and $H(t):=h(t)/a(t)^p$ it holds for almost all $t\in(0,\tex)$ 
\begin{align}
\frac{1}{p}&\frac{\d}{\d t}H(t)+\frac{1}{a(t)^{2-p}}\norm{\A^0w(t)-\lambda w(t)}^2=\frac{1}{p}\frac{h'(t)+p\norm{\A^0u(t)}^2}{h(t)}H(t)+\lambda\frac{\d}{\d t}\frac{1}{2}\norm{w(t)}^2\\
\lambda&\frac{\d}{\d t}\frac{1}{2}\norm{w(t)}^2=\frac{\lambda}{a(t)^2}\left[\lambda\frac{\norm{u(t)}^2}{a(t)^{2-p}}-\langle\A^0 u(t),u(t)\rangle\right].
\end{align}
\end{lemma}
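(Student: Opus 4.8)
The plan is to derive both identities by elementary differentiation, using the $(p-1)$-homogeneity of $\A$ to relate the quantities at scale $u(t)$ to those at scale $w(t)=u(t)/a(t)$. The preliminary observation I would record is the \emph{scaling relation}: since $a(t)>0$ on $(0,\tex)$, homogeneity gives $\A u(t)=a(t)^{p-1}\A w(t)$, and since multiplication by a positive scalar maps the minimal-norm element of a convex set to the minimal-norm element of its image, also $\A^0 u(t)=a(t)^{p-1}\A^0 w(t)$. Consequently $h(t)=\langle\A^0 u(t),u(t)\rangle=a(t)^p\langle\A^0 w(t),w(t)\rangle$, so $H(t)=h(t)/a(t)^p=\langle\A^0 w(t),w(t)\rangle$; likewise $\norm{\A^0 u(t)}^2=a(t)^{2p-2}\norm{\A^0 w(t)}^2$ and $\norm{u(t)}^2=a(t)^2\norm{w(t)}^2$. (By coercivity, $h(t)\ge\lambda_1\norm{u(t)}^p>0$ on $(0,\tex)$, so dividing by $h(t)$ is harmless.) These relations hold pointwise; the differentiated identities below will be asserted only on the full-measure set where $h$ is differentiable and $\partial_t u(t)=-\A^0 u(t)$.

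First I would prove the second, simpler identity. Differentiating $\norm{w(t)}^2=\norm{u(t)}^2/a(t)^2$, using $\frac{\d}{\d t}\frac12\norm{u(t)}^2=-\langle\A^0 u(t),u(t)\rangle$ (valid a.e.\ because $\partial_t u(t)=-\A^0 u(t)$ and $t\mapsto u(t)$ is locally Lipschitz) together with $a'(t)=-\lambda a(t)^{p-1}$ from \eqref{eq:characteristic_ODE}, one obtains
\[
\frac{\d}{\d t}\frac12\norm{w(t)}^2=\frac{1}{a(t)^2}\Big[\frac{\lambda\norm{u(t)}^2}{a(t)^{2-p}}-\langle\A^0 u(t),u(t)\rangle\Big],
\]
and multiplying by $\lambda$ yields exactly the claimed formula.

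For the first identity I would differentiate $H(t)=h(t)/a(t)^p$ by the quotient rule together with $a'=-\lambda a^{p-1}$, which gives $\tfrac1p H'(t)=\tfrac{h'(t)}{p\,a(t)^p}+\tfrac{\lambda h(t)}{a(t)^2}$. It then remains to verify that the asserted equality reduces to an identity with no derivatives left. Substituting $H=h/a^p$ on the right-hand side turns its $h'$ contribution into $\tfrac{h'(t)}{p\,a(t)^p}$, which cancels the one coming from $\tfrac1p H'(t)$; inserting the second identity just proven, rearranging, and multiplying through by $a(t)^{2-p}$, the remaining equality is, after expanding
\[
\norm{\A^0 w-\lambda w}^2=\norm{\A^0 w}^2-2\lambda\langle\A^0 w,w\rangle+\lambda^2\norm{w}^2=\norm{\A^0 w}^2-2\lambda H+\lambda^2\norm{w}^2
\]
and using the scaling relations in the forms $\norm{\A^0 u}^2/a^p=\norm{\A^0 w}^2/a^{2-p}$, $\lambda^2\norm{u}^2/a^{4-p}=\lambda^2\norm{w}^2/a^{2-p}$ and $h/a^2=H/a^{2-p}$, the tautology $\norm{\A^0 w}^2+\lambda^2\norm{w}^2=\norm{\A^0 w}^2+\lambda^2\norm{w}^2$. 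The somewhat unusual term $\tfrac1p\tfrac{h'+p\norm{\A^0 u}^2}{h}H$ on the right is precisely what is needed to absorb both the $h'$ contribution from differentiating $H$ and the $\norm{\A^0 w}^2$ contribution produced by expanding the square.

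The algebra itself is routine; the step I expect to require the most care is the regularity bookkeeping rather than the computation. One must assert the identities only a.e.\ --- on the intersection of the full-measure sets where $h'$ exists, where $t\mapsto\norm{u(t)}^2$ is differentiable, and where $\partial_t u(t)=-\A^0 u(t)$ --- and one should explicitly justify that the minimal-norm selection $\A^0$ commutes with scaling by the positive number $a(t)$. This is the only place where the $(p-1)$-homogeneity of $\A$ genuinely enters, and it is what makes the clean identity $H(t)=\langle\A^0 w(t),w(t)\rangle$ legitimate.
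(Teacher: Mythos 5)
Your proposal is correct and fills in exactly the "simple calculations" the paper alludes to: the paper's proof consists of the single remark that the result follows from elementary differentiation and the $(p-1)$-homogeneity of $\A$, which is precisely what you carry out (including the useful observation that the minimal-norm selection $\A^0$ commutes with positive scaling, so that $H(t)=\langle\A^0 w(t),w(t)\rangle$). No substantive difference in approach.
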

\begin{proof}
The result follows by simple calculations and uses the $(p-1)$-homogeneity of $\A$.
\qed 
\end{proof}

\begin{corollary}\label{cor:convergence_operator}
Under the conditions of Lemma~\ref{lem:dissipation_operator} assume that
\begin{align}\label{eq:integrability_assmpt}
g(t):=\frac{1}{p}\left(\frac{h'(t)+p\norm{\A^0 u(t)}^2}{h(t)}\right)_+\in L^1((s,\tex)),\qquad\forall 0<s<\tex.
\end{align}
Then it holds
\begin{align}\label{eq:convergence_operator}
\A^0w(t)-\lambda w(t)\to 0
\end{align}
strongly in $\H$, as $t\to\tex$.
\end{corollary}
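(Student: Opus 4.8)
The plan is to integrate the dissipation identity of Lemma~\ref{lem:dissipation_operator} over an interval $(s,\tex)$ and to show that the weighted defect $\frac{1}{a(\tau)^{2-p}}\norm{\A^0 w(\tau)-\lambda w(\tau)}^2$ is integrable up to the extinction time. Since, exactly as in the gradient flow case, the weight $\tau\mapsto 1/a(\tau)^{2-p}$ fails to be integrable near $\tex$ -- a glance at \eqref{eq:solution_ODE} with the natural choices of $\lambda$ from \eqref{eq:lambda} and \eqref{eq:lambda_inf} shows $\int_s^{\tex}a(\tau)^{p-2}\,\d\tau=\infty$ in all three regimes $p<2$, $p=2$, $p>2$ -- this will force $\norm{\A^0 w(t)-\lambda w(t)}\to 0$ as $t\to\tex$, which is \eqref{eq:convergence_operator}. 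This is the same mechanism as in the proofs of the corollaries following Lemmas~\ref{lem:decreasing_profiles} and~\ref{lem:bounds_p>2}; the genuinely new point is to control the sign-indefinite coefficient $\frac1p(h'+p\norm{\A^0 u}^2)/h$ of $H(t)$ that appears in Lemma~\ref{lem:dissipation_operator}, which is precisely what hypothesis \eqref{eq:integrability_assmpt} is designed for.

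First I would record that $h(t)=\langle\A^0 u(t),u(t)\rangle\ge 0$: by $(p-1)$-homogeneity $\A(0)$ is either empty or equal to $\{0\}$, so monotonicity of $\A$ gives $\langle\A^0 u(t),u(t)\rangle\ge 0$, hence $H(t)=h(t)/a(t)^p\ge 0$ for $t\in(0,\tex)$. Consequently $\frac1p\frac{h'(t)+p\norm{\A^0 u(t)}^2}{h(t)}H(t)\le g(t)H(t)$ with $g$ as in \eqref{eq:integrability_assmpt}, and Lemma~\ref{lem:dissipation_operator} yields, for a.e.\ $t\in(0,\tex)$,
$$\tfrac1p\tfrac{\d}{\d t}H(t)+\tfrac{1}{a(t)^{2-p}}\norm{\A^0 w(t)-\lambda w(t)}^2\le g(t)H(t)+\lambda\tfrac{\d}{\d t}\tfrac12\norm{w(t)}^2.$$
Under the standing coercivity \eqref{eq:first_eigenvalue_operator}, the operator analogues of Corollaries~\ref{cor:lower_bounds} and~\ref{cor:upper_bounds}, obtained by computations parallel to those of Section~\ref{sec:times_and_rates}, bound $\norm{w(t)}$ on $[s,\tex)$ for every $s>0$. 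Dropping the non-negative defect term, integrating from $s$ to $t$, absorbing the bounded boundary contribution $\tfrac\lambda2\norm{w(t)}^2$, and invoking Gronwall's Lemma with the $L^1$-weight $g$, I obtain $\sup_{[s,\tex)}H<\infty$, and therefore $\int_s^{\tex}g(\tau)H(\tau)\,\d\tau<\infty$.

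With these bounds available, I would reinstate the defect term: integrating the differential inequality over $(s,t)$ and using $H\ge 0$ together with the boundedness of $\norm{w}$ and of $\int_s^{\tex}gH$ gives
$$\int_s^{\tex}\tfrac{1}{a(\tau)^{2-p}}\norm{\A^0 w(\tau)-\lambda w(\tau)}^2\,\d\tau<\infty.$$
Combining this with $\int_s^{\tex}a(\tau)^{p-2}\,\d\tau=\infty$ and arguing verbatim as in the two earlier corollaries shows that the integrand cannot remain bounded away from $0$, so $\norm{\A^0 w(t)-\lambda w(t)}^2\to 0$ as $t\to\tex$, as claimed.

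The step I expect to be the real obstacle is the a priori boundedness of the rescaled surrogate energy $H(t)=h(t)/a(t)^p$, equivalently of the rescaled Rayleigh-type quotient $h(t)/\norm{u(t)}^p$ along the flow. In the gradient flow setting one has $H(t)=pJ(w(t))$ and Lemma~\ref{lem:dissipation} shows it is essentially dissipated, so boundedness is automatic; here the evolution of $H$ acquires the extra, a priori uncontrolled factor $\frac1p(h'+p\norm{\A^0 u}^2)/h$, and it is exactly its integrability \eqref{eq:integrability_assmpt} that makes the Gronwall closure possible. This is the integrability condition flagged in the abstract as deciding whether asymptotic profiles are eigenfunctions. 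A secondary, purely technical point is the passage from summability of the weighted defect to a genuine pointwise limit; it relies only on the non-integrability of $1/a^{2-p}$ near $\tex$ and is handled exactly as in the proofs following Lemmas~\ref{lem:decreasing_profiles} and~\ref{lem:bounds_p>2}.
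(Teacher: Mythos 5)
Your proposal is correct and follows essentially the same route as the paper: use the integrability assumption together with Gronwall's Lemma to bound $H(t)$, use coercivity \eqref{eq:first_eigenvalue_operator} to bound $\norm{w(t)}$, integrate the dissipation identity of Lemma~\ref{lem:dissipation_operator} to conclude that the weighted defect is integrable up to $\tex$, and exploit the non-integrability of $a(\cdot)^{p-2}$ to deduce \eqref{eq:convergence_operator}. The only cosmetic difference is that you justify $h\ge 0$ via monotonicity, whereas it also follows directly from the coercivity condition; the paper's (very terse) proof is otherwise identical in substance.
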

\begin{proof}
Using the integrability assumption \eqref{eq:integrability_assmpt}, Gronwall's Lemma implies that $H(t)$ has a finite limit as $t\to\tex$. Furthermore, $t\mapsto\norm{w(t)}$ is bounded which follows from \eqref{eq:first_eigenvalue_operator} as before. By integration, convergence \eqref{eq:convergence_operator} follows.
\qed 
\end{proof}

\begin{theorem}[Asymptotic profiles]
Under the conditions of Lemma~\ref{lem:dissipation_operator} assume that $w(t)$ converges (possibly up to a subsequence) strongly to some $w_*\in\H$ as $t\to\tex$ and that $\A^0$ is continuous, meaning that $u_k\to u$ implies $\A^0u_k\to\A^0u$. Then it holds $\lambda w_*=\A^0w_*$.
\end{theorem}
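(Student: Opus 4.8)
The plan is to read off the assertion from the convergence \eqref{eq:convergence_operator}, using the assumed continuity of $\A^0$ to pass the approximate eigenvalue relation to the limit. We are in the framework of Lemma~\ref{lem:dissipation_operator}, so the dissipation identity stated there holds along the flow; moreover $t\mapsto\norm{w(t)}$ is bounded as a consequence of the coercivity \eqref{eq:first_eigenvalue_operator}, exactly as in Section~\ref{sec:times_and_rates}. Assuming in addition the integrability hypothesis \eqref{eq:integrability_assmpt}, Corollary~\ref{cor:convergence_operator} then applies and yields
\begin{equation*}
\A^0 w(t)-\lambda w(t)\longrightarrow 0\quad\text{strongly in }\H,\qquad t\to\tex .
\end{equation*}
I would remark in passing that, under the present strong-convergence assumption, the finiteness of $\lim_{t\to\tex}H(t)$ — one of the intermediate conclusions of that corollary — is actually automatic here: the $(p-1)$-homogeneity of $\A$ gives $H(t)=h(t)/a(t)^p=\langle\A^0 w(t),w(t)\rangle$, which converges by continuity of $\A^0$; the integrability hypothesis is still needed, however, to control the dissipation integral.

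From here the argument is immediate. First I would pass to the sequence $t_n\to\tex$ along which $w(t_n)\to w_*$ strongly. Continuity of $\A^0$ then gives $\A^0 w(t_n)\to\A^0 w_*$, while trivially $\lambda w(t_n)\to\lambda w_*$. Subtracting, and using the display above along $t_n$,
\begin{equation*}
\A^0 w_*-\lambda w_*=\lim_{n\to\infty}\bigl(\A^0 w(t_n)-\lambda w(t_n)\bigr)=0,
\end{equation*}
which is precisely the claim $\lambda w_*=\A^0 w_*$.

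Structurally this mirrors the proof of Theorem~\ref{thm:ext_prof}, but with one conceptual difference that I expect to be the only genuine obstacle. In the gradient-flow case one passes the inclusion $\lambda w(t)\in\partial J(w(t))$, in its variational form, to the limit using merely the lower semicontinuity of $J$; for a general maximally monotone, $(p-1)$-homogeneous operator $\A$ no such inequality is available, and $\A^0$ — being only the minimal-norm selection of a set-valued operator — need not be continuous. The continuity hypothesis on $\A^0$ is exactly the substitute for that variational structure, and it is what makes the identification of the limit possible at all; without it, even though $\A^0 w(t)-\lambda w(t)\to 0$, there is in general no way to conclude $\A^0 w_*=\lambda w_*$. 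Everything quantitative, namely the dissipation estimate producing \eqref{eq:convergence_operator}, has already been carried out in Lemma~\ref{lem:dissipation_operator} and Corollary~\ref{cor:convergence_operator}, so no further computation is required.
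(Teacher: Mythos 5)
Your proof is correct and follows essentially the same route as the paper: it combines the convergence $\A^0 w(t)-\lambda w(t)\to 0$ from Corollary~\ref{cor:convergence_operator} with the assumed strong convergence $w(t_n)\to w_*$ and the continuity of $\A^0$, which is exactly the paper's triangle-inequality argument. Your explicit note that the integrability hypothesis \eqref{eq:integrability_assmpt} must be in force for \eqref{eq:convergence_operator} to hold is a fair point the paper leaves implicit, but it does not change the substance.
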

\begin{proof}
The statement follows from the triangle inequality. We compute
$$0\leq\norm{\A^0 w_*-\lambda w_*}\leq\norm{\A^0w_*-\A^0w(t)}+\norm{\A^0w(t)-\lambda w(t)}+\norm{\lambda w(t)-\lambda w_*}.$$
In the limit $t\to\tex$ the first term of the right hand side vanishes due to the continuity of $\A^0$, the second term due to \eqref{eq:convergence_operator}, and the third term tends to zero since we assumed $w(t)$ to converge strongly.
\qed 
\end{proof}

\begin{remark}[Assumptions on the operator]
Note that the results above make in fact no use of the maximal monotonicity of the operator $\A$, which is only required to ensure well-posedness of \eqref{op_flow}. Hence, the results remain true for general homogeneous operators whenever \eqref{op_flow} is well-posed and its solution has the correct decay behavior, meaning the operator is coercive as in \eqref{eq:first_eigenvalue_operator}.
\end{remark}

\begin{remark}[Integrability assumption]\label{rem:integrability}
For gradient flows it is apparent from the last item of Theorem~\ref{thm:brezis} that $h'(t)=-p\norm{\A^0u(t)}^2$, which in particular implies the integrability assumption \eqref{eq:integrability_assmpt} in Corollary~\ref{cor:convergence_operator}. Furthermore, this identity can be used to show that the generalized Rayleigh quotient $\Lambda(t)$ is decreasing. Since the whole of our analysis in Section~\ref{sec:times_and_rates} relies on coercivity and a decreasing Rayleigh quotient, almost all results from Section~\ref{sec:main_sec} remain valid in the operator case, if only $h'(t)=-p\norm{\A^0 u(t)}^2$ holds. For a general operator $\A^0$ using the chain rule this is the case if the Gateaux-derivative of $u\mapsto\langle\A^0 u,u\rangle$ in direction $v$ is given by $p\langle\A^0u,v\rangle.$ However, this identity can in general be false, even for linear operators as we will see below. 
\end{remark}

\begin{remark}[Existence of eigenvalues]\label{rem:existence_Ev}
Unlike in the gradient flow case, the coercivity condition \eqref{eq:first_eigenvalue_operator} \emph{does not} imply the existence of a positive eigenvalue. For instance, the matrix 
$$
\begin{pmatrix}
1 & \alpha \\ 
-\alpha & 1
\end{pmatrix} ,\qquad\alpha\neq 0,
$$
does not posses a real eigenvalue, despite being coercive (and in particular positive definite) in the sense of \eqref{eq:first_eigenvalue_operator}. Consequently the integrability assumption \eqref{eq:integrability_assmpt} cannot be fulfilled in such cases.
\end{remark}

As a first class of non-subdifferential operators for which one can show the integrability condition \eqref{eq:integrability_assmpt} we study the case of $\A$ being a general linear operator on a finite dimensional Hilbert space, meaning that $\A$ can be represented as a matrix. If this matrix is non-symmetric, the flow \eqref{op_flow} is not a gradient flow like \eqref{gradflow}. Note that a non-symmetric and coercive matrix does not necessarily have a real eigenvalue, cf. Remark~\ref{rem:existence_Ev} above. However, we will have to demand that every eigenvalue of $\A$ is real, in order to prove that the integrability condition \eqref{eq:integrability_assmpt} is satisfied and the flow admits an asymptotic profile which is an eigenvector of $\A$.

\begin{example}[Non-symmetric matrix]
Let $\A\in\R^{n\times n}$ be a matrix, let $\A_\mathrm{sym}=(\A+\A^T)/2$ and $\A_\mathrm{asym}=(\A-\A^T)/2$ denote the symmetric and antisymmetric parts of $\A$. Furthermore, we assume that $\A_\mathrm{sym}$ is positively semi-definite which implies the same for $\A$ and hence $\A$ is a monotone operator. The solution of \eqref{op_flow} is then given by $u(t)=\exp(-t\A)f$ such that $h(t)$ is differentiable and it holds
\begin{align*}
h'(t)&=\frac{\d}{\d t}\langle \A u(t),u(t)\rangle\\
&=\langle\A u(t),\partial_t u(t)\rangle+\langle\partial_t u(t),\A^T u(t)\rangle\\
&=-2\norm{\A u(t)}^2+\langle(\A-\A^T)u(t),\A u(t)\rangle.
\end{align*}
Consequently, we are interested in the integrability of
$$g(t)=\frac{1}{2}\left(\frac{h'(t)+2\norm{\A u(t)}^2}{h(t)}\right)_+=\left(\frac{\langle \A_\mathrm{asym}u(t),\A u(t)\rangle}{\langle\A u(t), u(t)\rangle}\right)_+.$$
If $\A_\mathrm{asym}=0$ then $g(t)=0$ which is no surprise since in this case one has a gradient flow with respect to the functional $J(u)=\frac{1}{2}\langle\A u,u\rangle$. In the general case $\A_\mathrm{asym}\neq 0$, however, we have to investigate the integrability of $g(t)$ as $t\to\infty$. 

Let us therefore use the explicit formula for $u(t)$ to transform $g(t)$ to 
\begin{align}\label{eq:g_for_matrices}
g(t)=\left(\frac{\left\langle\left[\A\exp(-t\A)\right]^T\A_\mathrm{asym}\exp(-t\A)f,f\right\rangle}{\langle\A \exp(-t\A)f,\exp(-t\A)f\rangle}\right)_+.
\end{align}

Taking into account that $\A$ should be coercive and possess a positive real eigenvalue, we assume that $\A$ has a Schur form, meaning that there is a orthgonal matrix $U\in\R^{n\times n}$ and a upper triangular matrix $S\in\R^{n\times n}$, which has the eigenvalues of $\A$ on its diagonal, such that $\A=U^TSU$. In fact, due to the structure of $g(t)$ in \eqref{eq:g_for_matrices}, one can assume without loss of generality that 
$$\A=
\begin{pmatrix}
\lambda & \alpha & • & \ast \\ 
0 & \lambda & • & • \\ 
• & • & \ddots & • \\ 
• & • & • & \lambda
\end{pmatrix} 
\qquad\text{and hence}
\qquad
\A_\mathrm{asym}=
\begin{pmatrix}
0 & \alpha & • & \ast \\ 
-\alpha & 0 & • & • \\ 
• & • & \ddots & • \\ 
-\ast & • & • & 0
\end{pmatrix} ,
$$ 
where $\lambda>0$ is the eigenvalue of $\A$, and $\alpha\in\R$ is some number. Due to this simple structure it holds 
\begin{align}\label{eq:exponential_jordan}
\exp(-t\A)\sim e^{-t\lambda}
\begin{pmatrix}
1 & t & \dots & a t^{n-2} & b t^{n-1} \\ 
• & \ddots & \ddots &  & c t^{n-2} \\ 
• & • & \ddots & \ddots & \vdots \\ 
• & • & • & \ddots & t \\ 
• & • & • & • & 1
\end{pmatrix},
\end{align}
where $a,b,c\in\R$ are some numbers. Here we ignored all other coefficients and lower powers of $t$, the most important thing being that the powers increase by one on each superdiagonal. In the following we keep track of the two highest powers only. Consequently, one obtains
\begin{align}
\A\exp(-t\A)&\sim e^{-t\lambda}
\begin{pmatrix}
\ast & \dots & \lambda a t^{n-2} & \lambda b t^{n-1}+\alpha c t^{n-2} \\ 
\vdots & • & \ast & \lambda c t^{n-2} \\ 
\vdots & • & • & \vdots \\ 
\ast & \dots & \dots & \ast
\end{pmatrix},\\[10pt]
\A_\mathrm{asym}\exp(-t\A)&\sim e^{-t\lambda} 
\begin{pmatrix}
\ast  & \dots & \ast & \alpha c t^{n-2} \\ 
\vdots  & • & -\alpha a t^{n-2} & -\alpha b t^{n-1} \\ 
\vdots  & • & • & \vdots \\ 
\ast  & \dots & \dots & \ast
\end{pmatrix}.
\end{align}
Finally, with regard to the numerator in \eqref{eq:g_for_matrices} we compute
$$
\left[\A\exp(-t\A)\right]^T\A_\mathrm{asym}\exp(-t\A)\sim 
e^{-2\lambda t}
\begin{pmatrix}
\ast & \dots & \dots & \ast \\ 
\vdots & • & • & \vdots \\ 
\vdots & • & • & \alpha a c t^{2n-4} \\ 
\vdots & \dots & \dots & (\alpha c)^2 t^{2n-4}
\end{pmatrix},
$$
and we observe that the highest occuring power of $t$ is $2n-4$ and not $2n-2$ as one might expect from \eqref{eq:exponential_jordan}. Let us for simplicity assume that the last entry of $f$ is different from zero. Then the numerator of $g(t)$ is of order $e^{-2\lambda t}t^{2n-4}$ whereas the denominator has order $e^{-2\lambda t}t^{2n-2}$ as $t\to\infty$. Consequently, we obtain that $g(t)=\mathcal{O}(1/t^2)$ and is, in particular, integrable. If $f$ has zero entries, then this is also true since the correct powers of $t$ are annihilated both in the numerator and denominator as one can see similar to the computations above.

We conclude this example remarking that the assumption of having only real eigenvalues, cannot be relaxed. The matrix
$$\A=
\begin{pmatrix}
1 & 0 & 0 \\ 
0 & 1 & 1 \\ 
0 & -1 & 1
\end{pmatrix} 
$$
is coercive, hence positive definite, and has eigenvalues $1$ and $1\pm i$. Nevertheless, it can be computed explicitly that the flow \eqref{op_flow} with respect to $\A$ does not have an asymptotic profile.
\end{example}

Finally, we study the Monge-Amp\`{e}re operator which is no subdifferential and still yields asymptotic profiles. This was already proven in \cite{sanchez2018asymptotic} using comparison principles. Using our abstract framework, however, it is straightforward to prove the integrability condition \eqref{eq:integrability_assmpt} and hence the existence of asymptotic profiles.

\begin{example}[Time-dependent Monge-Amp\`{e}re equation]
In this example we study the Dirichlet problem for the time-dependent Monge-Amp\`{e}re equation, which has the form
\begin{align}\label{eq:monge-ampere}
    \begin{cases}
    \partial_tu-\det(D^2u)=0\quad&\text{in }(0,\infty)\times\Omega,\\
    u=0\quad&\text{in }(0,\infty)\times\partial\Omega,\\
    u(0)=f\quad&\text{in }\Omega,
    \end{cases}
\end{align}
where $\Omega\subset\R^n$ and $f$ are sufficiently smooth and convex to ensure well-posedness (cf.~\cite{sanchez2018asymptotic}). In this example we do not study regularity issues and tacitly assume enough smoothness of the involved functions. The operator $\A$ is given by $\A u=-\det(D^2u)$ and is $p$-homogeneous with $p:=n+1$. With regard to Remark~\ref{rem:integrability} we define the Monge-Amp\`{e}re functional
$$M(u):=\langle\A u,u\rangle=\int_\Omega-u\det(D^2u)\d x$$
which is coercive in the sense of \eqref{eq:first_eigenvalue_operator} according to \cite{le2017eigenvalue}. Furthermore, it was shown in \cite{tso1990real} that its Gateaux-derivative is given by
$$\delta M(u)[v]=(n+1)\int_\Omega -\det(D^2u)v\d x=p\langle\A u,v\rangle,\quad u,v=0\text{ on }\partial\Omega.$$
Correspondingly, the derivative of the surrogate function $h(t)=\langle\A u(t),u(t)\rangle$ where $u(t)$ solves \eqref{eq:monge-ampere} is given by $h'(t)=-p\norm{\A u(t)}^2$ and hence the integrability assumption \eqref{eq:integrability_assmpt} is met with $g(t)=0$ and Remark~\ref{rem:integrability} applies. Hence, we obtain the result that asymptotic profiles of the Monge-Amp\`{e}re flow are eigenfunctions purely from the properties of the operator and associated functional, without using any comparison principle.
\end{example}

\section*{Open Problems}
In the following we list some open problems which are subject to future work:
\begin{itemize}
\item When is the surrogate function $h$ in \eqref{eq:surrogate} differentiable? In general, it is only right-continuous since $t\mapsto\A^0 u(t)$ does not have enough regularity. We think that the pairing $\langle\A^0 u(t),u(t)\rangle$ is differentiable, nevertheless, with the proof pending.
\item Under which conditions on the operator $\A$ does the \emph{integrability assumption} from Corollary~\ref{cor:convergence_operator} hold? Is it alreasy enough to assume that all eigenvalues are positive as in the linear case or is the non-linear scenario more involved?
\item Can the theory from Section~\ref{sec:operator} be transferred to \emph{accretive operators} defined on Banach spaces?
\item How does our theory translate to \emph{doubly nonlinear} evolution equations of the form
$$\partial H(\partial_t u)+\partial J(u)\ni 0,\qquad u(0)=f,$$
defined on Banach spaces, where the eigenvalue problem becomes $\partial J(u)-\lambda\partial H(u)\ni 0$? For $H=\frac{1}{p}\norm{\cdot}^p$ and $J$ both being absolutely $p$-homogeneous with $p\in(1,\infty)$ this was answered in \cite{hynd2017approximation}. These results basically coincide with our result in the 2-homogeneous case: asymptotic profiles are ground states or zero, which seems to be a typical behavior for equal homogeneities. For the general case of $H$ being absolutely $q$-homogeneous with $q>1$ and $J$ being absolutely $p$-homogeneous with $p\geq 1$ it is straightforward to show that for eigenfunctions fulfilling $\partial J(f)-\lambda\partial H(f)\ni 0$ the flow admits a separate variable solution of the form $u(t)=\max\{a(t),0\}f$ with the decay profile 
$$a(t)=
\begin{cases}
\left(1-\frac{q-p}{q-1}\lambda^\frac{1}{q-1}t\right)^\frac{q-1}{q-p},\quad&p\neq q,\\
\exp\left(-\lambda^\frac{1}{q-1}t\right),\quad&q=p.
\end{cases}
$$ 
In particular, there is finite extinction for $p<q$, exponential decay for $p=q$, and algebraic decay in infinite time for $p>q$. Consequently, we expect that our theory translates directly to such situations.
\item A related problem is the study of \emph{rate-independent} problems \cite{mielke2005evolution} of the form 
$$\partial H(\partial_t u)+t\partial J(u)\ni 0,\qquad u(0)=f,$$
where now both $H$ and $J$ are absolutely 1-homogeneous. If $H$ equals a Hilbert space norm, a quick analysis shows that asymptotic profiles exist and are eigenfunctions. However, is that true for general~$H$? The analysis of this situation is extremely important since typically eigenvalue with respect to two 1-homogeneous functionals are of special interest, e.g., the $L^1$-eigenvalue problem for 1-Laplacian \cite{kawohl2007dirichlet,feld2019rayleigh}. 
\end{itemize}

\begin{acknowledgements}
This work was supported by the European Union’s Horizon 2020 research and innovation programme under the Marie Sk\l{}odowska-Curie grant agreement No 777826 (NoMADS). This is a pre-print of an article published in Journal of Evolution Equations. The final authenticated version is available online at: \url{https://doi.org/10.1007/s00028-019-00545-1}.
\end{acknowledgements}

\bibliographystyle{abbrv}
\bibliography{bibliography}  

\begin{thebibliography}{10}

\bibitem{alikakos1982lower}
N.~D. Alikakos and R.~Rostamian.
\newblock Lower bound estimates and separable solutions for homogeneous
  equations of evolution in {B}anach space.
\newblock {\em J. Differential Equations}, 43(3):323--344, 1982.

\bibitem{alter2005characterization}
F.~Alter, V.~Caselles, and A.~Chambolle.
\newblock A characterization of convex calibrable sets in.
\newblock {\em Mathematische Annalen}, 332(2):329--366, 2005.

\bibitem{andreu2002some}
F.~Andreu, V.~Caselles, J.~I. D{\'\i}az, and J.~M. Maz{\'o}n.
\newblock Some qualitative properties for the total variation flow.
\newblock {\em Journal of functional analysis}, 188(2):516--547, 2002.

\bibitem{andreu2008nonlocal}
F.~Andreu, J.~Maz{\'o}n, J.~Rossi, and J.~Toledo.
\newblock A nonlocal $p$-{L}aplacian evolution equation with {N}eumann boundary
  conditions.
\newblock {\em Journal de math{\'e}matiques pures et appliqu{\'e}es},
  90(2):201--227, 2008.

\bibitem{andreu2004parabolic}
F.~Andreu-Vaillo, V.~Caselles, J.~M. Maz{\'o}n, and J.~M. Maz{\'o}n.
\newblock {\em Parabolic quasilinear equations minimizing linear growth
  functionals}, volume 223.
\newblock Springer Science \& Business Media, 2004.

\bibitem{berryman1980stability}
J.~G. Berryman and C.~J. Holland.
\newblock Stability of the separable solution for fast diffusion.
\newblock {\em Archive for Rational Mechanics and Analysis}, 74(4):379--388,
  1980.

\bibitem{blanc2019evolution}
P.~Blanc, C.~Esteve, and J.~D. Rossi.
\newblock The evolution problem associated with eigenvalues of the {H}essian.
\newblock {\em arXiv preprint arXiv:1901.01052}, 2019.

\bibitem{bonforte2012behaviour}
M.~Bonforte, G.~Grillo, and J.~L. Vazquez.
\newblock Behaviour near extinction for the fast diffusion equation on bounded
  domains.
\newblock {\em Journal de math{\'e}matiques pures et appliqu{\'e}es},
  97(1):1--38, 2012.

\bibitem{bozorgnia2016convergence}
F.~Bozorgnia.
\newblock Convergence of inverse power method for first eigenvalue of
  $p$-{L}aplace operator.
\newblock {\em Numerical Functional Analysis and Optimization},
  37(11):1378--1384, 2016.

\bibitem{brezis1973ope}
H.~Brezis.
\newblock {\em Operateurs maximaux monotones et semi-groupes de contractions
  dans les espaces de {H}ilbert}, volume~5.
\newblock Elsevier, 1973.

\bibitem{buhler2009spectral}
T.~B{\"u}hler and M.~Hein.
\newblock Spectral clustering based on the graph $p$-{L}aplacian.
\newblock In {\em Proceedings of the 26th Annual International Conference on
  Machine Learning}, pages 81--88. ACM, 2009.

\bibitem{bungert2019solution}
L.~Bungert and M.~Burger.
\newblock Solution paths of variational regularization methods for inverse
  problems.
\newblock {\em Inverse Problems}, 35(10):105012, 2019.

\bibitem{bungert2019nonlinear}
L.~Bungert, M.~Burger, A.~Chambolle, and M.~Novaga.
\newblock Nonlinear spectral decompositions by gradient flows of
  one-homogeneous functionals.
\newblock {\em arXiv preprint arXiv:1901.06979}, 2019.

\bibitem{bungert2019computing}
L.~Bungert, M.~Burger, and D.~Tenbrinck.
\newblock Computing nonlinear eigenfunctions via gradient flow extinction.
\newblock In {\em International Conference on Scale Space and Variational
  Methods in Computer Vision}, pages 291--302. Springer, 2019.

\bibitem{burger2016spectral}
M.~Burger, G.~Gilboa, M.~Moeller, L.~Eckardt, and D.~Cremers.
\newblock Spectral decompositions using one-homogeneous functionals.
\newblock {\em SIAM Journal on Imaging Sciences}, 9(3):1374--1408, 2016.

\bibitem{cohen2018shape}
I.~Cohen and G.~Gilboa.
\newblock {Shape Preserving Flows and the p-Laplacian Spectra}.
\newblock working paper or preprint, Oct. 2018.

\bibitem{crandall1980regularizing}
M.~G. Crandall and P.~Benilan.
\newblock Regularizing effects of homogeneous evolution equations.
\newblock Technical report, WISCONSIN UNIV-MADISON MATHEMATICS RESEARCH CENTER,
  1980.

\bibitem{feld2019rayleigh}
T.~M. Feld, J.-F. Aujol, G.~Gilboa, and N.~Papadakis.
\newblock Rayleigh quotient minimization for absolutely one-homogeneous
  functionals.
\newblock {\em Inverse Problems}, 2019.

\bibitem{bonforte2019sharp}
A.~Figalli and M.~Bonforte.
\newblock Sharp extinction rates for fast diffusion equations on generic
  bounded domains.
\newblock {\em Communications on Pure and Applied Mathematics}, 2019.

\bibitem{ghidaglia1991exact}
J.~Ghidaglia and A.~Marzocchi.
\newblock Exact decay estimates for solutions to semilinear parabolic
  equations.
\newblock {\em Applicable analysis}, 42(1-4):69--81, 1991.

\bibitem{giga2010nonlinear}
M.-H. Giga, Y.~Giga, and J.~Saal.
\newblock {\em Nonlinear partial differential equations: Asymptotic behavior of
  solutions and self-similar solutions}, volume~79.
\newblock Springer Science \& Business Media, 2010.

\bibitem{hauer2017kurdyka}
D.~Hauer and J.~Maz{\'o}n.
\newblock {Kurdyka-\L{}ojasiewicz-Simon inequality for gradient flows in metric
  spaces}.
\newblock {\em Trans. Amer. Math. Soc.}, 2019.
\newblock To appear.

\bibitem{hauer2019regularizing}
D.~Hauer and J.~M. Maz{\'o}n.
\newblock Regularizing effects of homogeneous evolution equations: the case of
  homogeneity order zero.
\newblock {\em Journal of Evolution Equations}, pages 1--32, 2019.

\bibitem{hynd2017approximation}
R.~Hynd and E.~Lindgren.
\newblock Approximation of the least {R}ayleigh quotient for degree p
  homogeneous functionals.
\newblock {\em Journal of Functional Analysis}, 272(12):4873--4918, 2017.

\bibitem{juutinen1999eigenvalue}
P.~Juutinen, P.~Lindqvist, and J.~J. Manfredi.
\newblock The $\infty$-eigenvalue problem.
\newblock {\em Archive for rational mechanics and analysis}, 148(2):89--105,
  1999.

\bibitem{kamin1988fundamental}
S.~Kamin and J.~L. V{\'a}zquez.
\newblock Fundamental solutions and asymptotic behaviour for the
  $p$-{L}aplacian equation.
\newblock {\em Revista Matem{\'a}tica Iberoamericana}, 4(2):339--354, 1988.

\bibitem{kawohl2006positive}
B.~Kawohl and P.~Lindqvist.
\newblock Positive eigenfunctions for the p-{L}aplace operator revisited.
\newblock {\em Analysis-International Mathematical Journal of Analysis and its
  Application}, 26(4):545, 2006.

\bibitem{kawohl2007dirichlet}
B.~Kawohl and F.~Schuricht.
\newblock {D}irichlet problems for the 1-{L}aplace operator, including the
  eigenvalue problem.
\newblock {\em Communications in Contemporary Mathematics}, 9(04):515--543,
  2007.

\bibitem{le2017eigenvalue}
N.~Q. Le.
\newblock The eigenvalue problem for the monge-amp{\`e}re operator on general
  bounded convex domains.
\newblock {\em Annali della Scuola Normale Superiore di Pisa. Classe di
  scienze}, 18(4):1519--1559, 2018.

\bibitem{littig2015porous}
S.~Littig and J.~Voigt.
\newblock Porous medium equation and fast diffusion equation as gradient
  systems.
\newblock {\em Czechoslovak Mathematical Journal}, 65(4):869--889, 2015.

\bibitem{mielke2005evolution}
A.~Mielke.
\newblock Evolution of rate-independent systems.
\newblock {\em Evolutionary equations}, 2:461--559, 2005.

\bibitem{portilheiro2013degenerate}
M.~Portilheiro and J.~L. V{\'a}zquez.
\newblock Degenerate homogeneous parabolic equations associated with the
  infinity-{L}aplacian.
\newblock {\em Calculus of Variations and Partial Differential Equations},
  46(3-4):705--724, 2013.

\bibitem{sanchez2018asymptotic}
J.~S{\'a}nchez.
\newblock Asymptotic behavior of solutions of a k-hessian evolution equation.
\newblock {\em Journal of Differential Equations}, 268(4):1840--1853, 2020.

\bibitem{savare1994asymptotic}
G.~Savar{\'e} and V.~Vespri.
\newblock The asymptotic profile of solutions of a class of doubly nonlinear
  equations.
\newblock {\em Nonlinear Analysis: Theory, Methods \& Applications},
  22(12):1553--1565, 1994.

\bibitem{schmidt2018inverse}
M.~F. Schmidt, M.~Benning, and C.-B. Sch{\"o}nlieb.
\newblock Inverse scale space decomposition.
\newblock {\em Inverse Problems}, 34(4):045008, 2018.

\bibitem{stan2018porous}
D.~Stan, F.~del Teso, and J.~L. V{\'a}zquez.
\newblock Porous medium equation with nonlocal pressure.
\newblock In {\em Current Research in Nonlinear Analysis}, pages 277--308.
  Springer, 2018.

\bibitem{stan2013asymptotic}
D.~Stan and J.~L. V{\'a}zquez.
\newblock Asymptotic behaviour of the doubly nonlinear diffusion equation
  $u_t={\Delta}_p u^m$ on bounded domains.
\newblock {\em Nonlinear Analysis: Theory, Methods \& Applications}, 77:1--32,
  2013.

\bibitem{tso1990real}
K.~Tso.
\newblock On a real {M}onge-{A} mp{\`e}re functional.
\newblock {\em Inventiones mathematicae}, 101(1):425--448, 1990.

\bibitem{varvaruca2004exact}
E.~Varvaruca.
\newblock Exact rates of convergence as $t\to\infty$ for solutions of nonlinear
  evolution equations.
\newblock {\em Journal of Evolution Equations}, 4(4):543--565, 2004.

\bibitem{vazquez2004dirichlet}
J.~L. Vazquez.
\newblock The {D}irichlet problem for the porous medium equation in bounded
  domains. asymptotic behavior.
\newblock {\em Monatshefte f{\"u}r Mathematik}, 142(1-2):81--111, 2004.

\bibitem{vazquez2016dirichlet}
J.~L. V{\'a}zquez.
\newblock The {D}irichlet problem for the fractional p-{L}aplacian evolution
  equation.
\newblock {\em Journal of Differential Equations}, 260(7):6038--6056, 2016.

\bibitem{vazquez2018asymptotic}
J.~L. V{\'a}zquez.
\newblock Asymptotic behaviour for the fractional heat equation in the
  {E}uclidean space.
\newblock {\em Complex Variables and Elliptic Equations}, 63(7-8):1216--1231,
  2018.

\bibitem{yang2008generalized}
F.~Yang and Z.~Wei.
\newblock Generalized {E}uler identity for subdifferentials of homogeneous
  functions and applications.
\newblock {\em Journal of Mathematical Analysis and Applications},
  337(1):516--523, 2008.

\end{thebibliography}

\begin{appendix}
\section*{Appendix}
\section{Absolutely $p$-Homogeneous Convex Functionals and their Subdifferential}
\begin{proposition}\label{prop:null_space}
Let $J:\H\to\R\cup\{\infty\}$ be absolutely $p$-homogeneous and convex. Then it holds
\begin{enumerate}
\item $\calN(J):=\{u\in\H\st J(u)=0\}$ and $\dom(J):=\{u\in\H\st J(u)<\infty\}$ are linear subspaces of $\H$, referred to as \emph{null-space} and \emph{effective domain} of $J$.
\item $\calN(J)$ is closed if $J$ is lower semi-continuous.
\end{enumerate}
\end{proposition}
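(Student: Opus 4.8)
The plan is to exploit the interplay between convexity and absolute homogeneity: convexity on its own controls only convex combinations such as the midpoint $\tfrac12 u+\tfrac12 v$, whereas homogeneity lets us rescale such a midpoint back up to the genuine sum $u+v$. A useful preliminary observation is that $J\geq 0$ everywhere: absolute homogeneity with $c=-1$ gives $J(-u)=J(u)$, and then convexity together with $J(0)=0$ yields $0=J(0)=J(\tfrac12 u+\tfrac12(-u))\leq\tfrac12 J(u)+\tfrac12 J(-u)=J(u)$.

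For the first claim, closure of $\dom(J)$ and $\calN(J)$ under scalar multiplication is immediate: for $c\neq 0$ use $J(cu)=|c|^pJ(u)$, and for $c=0$ use $J(0)=0$. Closure under addition is the step where homogeneity is needed: given $u,v$ with $J(u),J(v)<\infty$ (respectively $J(u)=J(v)=0$), convexity gives $J(\tfrac12 u+\tfrac12 v)\leq\tfrac12 J(u)+\tfrac12 J(v)$, which is finite (respectively $\leq 0$, hence $=0$ by nonnegativity); applying homogeneity with $c=2$ then gives $J(u+v)=2^pJ(\tfrac12 u+\tfrac12 v)$, again finite (respectively zero). Since also $0\in\calN(J)\subseteq\dom(J)$, both sets are linear subspaces.

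For the second claim, nonnegativity of $J$ shows that $\calN(J)$ equals the $0$-sublevel set $\{u\in\H : J(u)\leq 0\}$, and lower semicontinuity of $J$ is exactly the statement that every sublevel set $\{J\leq\alpha\}$ is closed; hence $\calN(J)$ is closed. There is no serious obstacle here; the only points to be careful about are that convexity alone says nothing about $u+v$ and must be paired with the rescaling step, and that the small fact $J\geq 0$ is what makes both the additive closure of $\calN(J)$ and the sublevel-set reformulation go through cleanly.
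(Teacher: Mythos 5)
Your proof is correct; the paper states this proposition without proof, and your argument (nonnegativity of $J$ from $J(-u)=J(u)$ and $J(0)=0$, additive closure via the midpoint trick combined with rescaling by $c=2$, and closedness of $\calN(J)=\{J\le 0\}$ as a sublevel set of a lower semicontinuous function) is exactly the standard reasoning the authors evidently took for granted. No gaps.
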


\begin{proposition}\label{prop:orth}
Under the conditions of Proposition~\ref{prop:null_space} it holds for any $u\in\H$ that $\partial J(u)\subset\calN(J)^\perp$.
\end{proposition}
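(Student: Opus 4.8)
The plan is to fix $u\in\H$ and $\sg\in\partial J(u)$ and to show $\langle\sg,n\rangle=0$ for every $n\in\calN(J)$. First I would dispose of the trivial case: if $u\notin\dom(J)$, then $\partial J(u)=\emptyset$ — testing the defining inequality \eqref{def:subdiff} against any $v\in\dom(J)$ (which is nonempty since $J$ is proper) would force $\infty\le J(v)<\infty$ — so the inclusion $\partial J(u)\subset\calN(J)^\perp$ holds vacuously, and we may assume $u\in\dom(J)$.

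The heart of the argument is the claim that $J$ is invariant under translations by null-space elements, i.e.\ $J(u+n)=J(u)$ for all $n\in\calN(J)$. To get $J(u+n)\le J(u)$, I would fix $\eps\in(0,1)$, write the convex combination $u+n=(1-\eps)\tfrac{u}{1-\eps}+\eps\tfrac{n}{\eps}$, and use convexity together with absolute $p$-homogeneity \eqref{eq:p-hom_J} to obtain $J(u+n)\le(1-\eps)^{1-p}J(u)+\eps^{1-p}J(n)=(1-\eps)^{1-p}J(u)$, where $J(n)=0$ because $n\in\calN(J)$. Letting $\eps\to0^+$ gives $J(u+n)\le J(u)$. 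Applying the same inequality with $u+n$ in place of $u$ and $-n\in\calN(J)$ in place of $n$ yields the reverse inequality, hence $J(u+n)=J(u)$; in particular $J(u+tn)=J(u)$ for every $t\in\R$, since $tn\in\calN(J)$. Finiteness is not an issue here, both because $\dom(J)$ is a linear subspace containing $u$ and $\calN(J)$ by Proposition~\ref{prop:null_space}, and because the displayed estimate makes it automatic.

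With translation invariance in hand the conclusion is immediate: substituting $v=u+tn$ into \eqref{def:subdiff} gives $J(u)+t\langle\sg,n\rangle\le J(u+tn)=J(u)$, so $t\langle\sg,n\rangle\le0$ for all $t\in\R$, forcing $\langle\sg,n\rangle=0$. Since $n\in\calN(J)$ was arbitrary, $\sg\in\calN(J)^\perp$. The only step requiring any real thought is the translation-invariance claim; everything else is bookkeeping plus a one-line use of the subgradient inequality, so I expect that claim to be the main (and essentially the sole) obstacle.
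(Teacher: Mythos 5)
Your proof is correct, but it takes a genuinely different route from the paper's. The paper tests the subgradient inequality directly at $v=\pm v_0\in\calN(J)$ and invokes the Euler identity \eqref{eq:euler} to get $|\langle\sg,v_0\rangle|\leq(p-1)J(u)$, then uses the $(p-1)$-homogeneity \eqref{eq:p-1-hom_subdiff} of $\partial J$ to rescale this bound to $|\langle\sg,v_0\rangle|\leq c(p-1)J(u)$ and sends $c\searrow 0$. You instead first establish the translation invariance $J(u+tn)=J(u)$ for $n\in\calN(J)$ via the convex combination $u+n=(1-\eps)\tfrac{u}{1-\eps}+\eps\tfrac{n}{\eps}$ and homogeneity, and then test the subgradient inequality at $v=u+tn$ for $t$ of both signs; both steps check out (in particular $(1-\eps)^{1-p}\to 1$ and $\eps^{1-p}J(n)=0$ for all $p\geq1$, and the vacuous case $\partial J(u)=\emptyset$ is handled correctly). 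Your intermediate claim is exactly the first half of Proposition~\ref{prop:subdiff_nullspace}, which the paper proves later by a biconjugation argument $J=J^{**}$ that \emph{requires} lower semi-continuity; your convexity-plus-homogeneity derivation obtains it without that hypothesis, which is a small but real gain in generality. The paper's route, on the other hand, is shorter because it reuses machinery (\eqref{eq:euler} and \eqref{eq:p-1-hom_subdiff}) already on the table and avoids proving translation invariance as a separate lemma. Either argument is acceptable; there is no gap in yours.
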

\begin{proof}
Using the definition of the subdifferential \eqref{def:subdiff} for $v=\pm v_0\in\calN(J)$ together with \eqref{eq:euler} yields
$$|\langle\sg,v_0\rangle|\leq(p-1)J(u),\quad\forall\sg\in\partial J(u).$$
However, due to the homogeneity \eqref{eq:p-1-hom_subdiff} it holds $c^{p-1}\sg\in\partial J(c u)$ for all $c>0$. Replacing $\sg$ by $c^{p-1}\sg$ and $u$ by $cu$ in the inequality above yields
$$|\langle\sg,v_0\rangle|\leq c(p-1)J(u)$$
after using the homogeneity of $J$ and dividing by $c^{p-1}$. Letting $c\searrow 0$ concludes the proof.
\qed 
\end{proof}

The following Proposition states that the value of the functional $J$ and its subdifferential is invariant under addition of a null-space element.

\begin{proposition}\label{prop:subdiff_nullspace}
Let $v_0\in\calN(J)$ and $u\in\H$. If $J$ is lower semi-continuous, it holds $J(u)=J(u+v_0)$ and $\partial J(u)=\partial J(u+v_0)$.
\end{proposition}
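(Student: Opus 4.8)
The plan is to first prove the invariance of the functional value, $J(u) = J(u+v_0)$, using only convexity and absolute homogeneity, and then to transfer this to the subdifferential by exploiting that subgradients are orthogonal to the null-space (Proposition~\ref{prop:orth}). For the first part I would fix $\lambda\in(0,1)$ and write $u+v_0$ as the convex combination $u+v_0 = (1-\lambda)\frac{u}{1-\lambda} + \lambda\frac{v_0}{\lambda}$. Convexity of $J$ then gives $J(u+v_0) \le (1-\lambda) J\!\left(\frac{u}{1-\lambda}\right) + \lambda J\!\left(\frac{v_0}{\lambda}\right)$, and the absolute $p$-homogeneity \eqref{eq:p-hom_J} together with $J(v_0)=0$ turns the right-hand side into $(1-\lambda)^{1-p} J(u)$. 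Letting $\lambda\searrow 0$ and observing that $(1-\lambda)^{1-p}\to 1$ for every $p\ge 1$ yields $J(u+v_0)\le J(u)$ whenever $J(u)<\infty$; the case $J(u)=\infty$ is then immediate by applying the finite-case inequality to $u+v_0$ and $-v_0\in\calN(J)$ and obtaining a contradiction. Finally, swapping the roles of $u$ and $u+v_0$ (with $-v_0$ in place of $v_0$) gives the opposite inequality, hence $J(u)=J(u+v_0)$.

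For the subdifferential identity I would take any $\sg\in\partial J(u)$, so that $J(u)+\langle\sg,v-u\rangle\le J(v)$ for all $v\in\H$. By Proposition~\ref{prop:orth} we have $\sg\in\calN(J)^\perp$, and since $v_0\in\calN(J)$ this gives $\langle\sg,v_0\rangle=0$. Combining this with the identity $J(u+v_0)=J(u)$ just established, for every $v\in\H$ one has $J(u+v_0)+\langle\sg,v-(u+v_0)\rangle = J(u)+\langle\sg,v-u\rangle-\langle\sg,v_0\rangle = J(u)+\langle\sg,v-u\rangle \le J(v)$, which is precisely the subgradient inequality for $u+v_0$. Hence $\partial J(u)\subseteq\partial J(u+v_0)$, and the reverse inclusion follows by running the same argument with $u+v_0$ and $-v_0$ in place of $u$ and $v_0$.

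The argument is elementary, so there is no genuine obstacle; the only points requiring a little care are the extended-real-valued case $J(u)=\infty$ and taking the limit $\lambda\searrow 0$ on the correct side so that $(1-\lambda)^{1-p}$ tends to $1$ from the appropriate direction. I would also remark that lower semi-continuity of $J$ is not actually used in this proof---it is assumed only for consistency with the ambient setup (and is what makes $\calN(J)$ closed in Proposition~\ref{prop:null_space}).
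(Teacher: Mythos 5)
Your proof is correct, and for the identity $J(u+v_0)=J(u)$ it takes a genuinely different route from the paper. The paper splits into cases: for $p=1$ it cites earlier work, and for $p>1$ it argues by Fenchel duality, showing first that $J^*(\sg)=\infty$ whenever $\sg\notin\calN(J)^\perp$ (by scaling a null-space element in the defining supremum) and then writing $J=J^{**}=\sup_{\sg\in\calN(J)^\perp}\langle\sg,\cdot\rangle-J^*(\sg)$, a representation that is manifestly invariant under adding elements of $\calN(J)$; this is where lower semi-continuity (and properness) enters, via $J^{**}=J$. Your argument instead uses only convexity and absolute $p$-homogeneity through the convex combination $u+v_0=(1-\lambda)\tfrac{u}{1-\lambda}+\lambda\tfrac{v_0}{\lambda}$, giving $J(u+v_0)\leq(1-\lambda)^{1-p}J(u)$ and letting $\lambda\searrow 0$; this treats all $p\geq1$ uniformly, needs no external reference for $p=1$, and---as you correctly observe---dispenses with lower semi-continuity altogether, so it is both more elementary and slightly more general. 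The paper's duality argument buys a conceptual byproduct (the support of $J^*$ is contained in $\calN(J)^\perp$, which is the dual face of Proposition~\ref{prop:orth}), but for the proposition as stated your direct argument is cleaner. Your handling of the $J(u)=\infty$ case and the passage from the value identity to $\partial J(u)=\partial J(u+v_0)$ via $\langle\sg,v_0\rangle=0$ coincides with what the paper leaves as ``straightforward,'' and is carried out correctly.
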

\begin{proof}
In the case $p=1$ the statement has been proven in \cite{burger2016spectral,bungert2019nonlinear}. 

For $p>1$ one argues as follows: We claim that the convex conjugate $J^*$ meets $J^*(\sg)=\infty$ whenever $\sg\notin\calN(J)^\perp$. 
To see this we note that for any $v_0\in\calN(J)$ it holds
\begin{align*}
J^*(\sg)=\sup_{u\in\H}\langle\sg,u\rangle-J(u)\geq \langle\sg,v_0\rangle.
\end{align*}
If $\langle\sg,v_0\rangle\neq 0$ we can use that $\calN(J)$ is a linear space and replace $v_0$ by $cv_0$ for $c\in\R$ to obtain that $J^*(\sg)=\infty$.
Using this together with the fact that $J$ is lower-semicontinuous and therefore equals its biconjugate, we obtain
\begin{align*}
J(u)=J^{**}(u)=\sup_{\sg\in\H}\langle\sg,u\rangle - J^*(\sg) = \sup_{\sg\in\calN(J)^\perp}\langle\sg,u\rangle - J^*(\sg).
\end{align*}
From here it is obvious that $J(u+v_0)=J(u)$ for all $v_0\in\calN(J)$.

Having established this it is straightforward to show $\partial J(u)=\partial J(u+v_0)$ using the definition of the subdifferential together with the orthogonality from Proposition~\ref{prop:orth}.
\qed 
\end{proof}

\section{Gradient Flow of Absolutely $p$-Homogeneous Functionals}
We conclude the appendix by listing several important properties of the gradient flow \eqref{gradflow}, starting with the existence theorem due to Brezis. To this end we have to introduce the single-valued operator 
\begin{align}\label{eq:subgrad_minimal_norm}
\partial^0 J(u)=\argmin\left\lbrace\norm{\sg}\st\sg\in\partial J(u)\right\rbrace,\quad u\in\H,
\end{align}
which gives the subgradient with minimal norm in $\partial J(u)$ and is well-defined since the latter is a convex set.

\begin{theorem}[Brezis]\label{thm:brezis}
Let $J:\H\to\R\cup\{\infty\}$ be convex and lower semicontinuous and let $f\in\overline{\dom(J)}$. Then there exists exactly one continuous map $u:[0,\infty)\to\H$ which is Lipschitz continuous on $[\delta,\infty),\;\delta>0$ and right-differentiable on $(0,\infty)$ such that 
\begin{itemize}
\item $u(0)=f$,
\item $\sg(t):=\partial_t^+ u(t)=-\partial^0 J(u(t))$ for all $t>0$,
\item $t\mapsto J(u(t))$ is convex, non-increasing and Lipschitz continuous on $[\delta,\infty),\;\delta>0$ with
\begin{align}\label{eq:derivative_J}
\frac{\d^+}{\d t}J(u(t))=-\norm{\sg(t)}^2,\quad t>0,
\end{align}
\end{itemize}
where $\d^+/\d t$ and $\partial_t^+$ denote right-derivatives and will be replaced by standard derivative symbols throughout the rest of this manuscript.
\end{theorem}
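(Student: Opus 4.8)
This is the classical result of Crandall--Liggett and Brezis, so I only sketch the standard route. \emph{Uniqueness} and the contractivity of the flow map follow at once from the monotonicity of $\partial J$: if $u,v$ solve the flow with initial data $f,g$, then for a.e.\ $t$ one has $\tfrac12\tfrac{\d}{\d t}\norm{u(t)-v(t)}^2=-\langle\xi(t)-\eta(t),\,u(t)-v(t)\rangle\le 0$ for any measurable selections $\xi(t)\in\partial J(u(t))$, $\eta(t)\in\partial J(v(t))$, whence $\norm{u(t)-v(t)}\le\norm{f-g}$. Thus it suffices to produce one curve with the stated properties.

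For \emph{existence} I would use the Yosida regularization. Since $J$ is proper, convex and lower semicontinuous, $A:=\partial J$ is maximal monotone, its resolvent $R_\lambda:=(\id+\lambda A)^{-1}$ is a single-valued nonexpansive map on all of $\H$, and the Yosida approximation $A_\lambda:=\tfrac1\lambda(\id-R_\lambda)$ is globally Lipschitz, monotone, and equals the gradient of the Moreau envelope $J_\lambda(x):=\min_{y\in\H}\big(J(y)+\tfrac1{2\lambda}\norm{x-y}^2\big)$, which is convex and of class $C^{1,1}$. By the Cauchy--Lipschitz theorem the regularized gradient flow $u_\lambda'=-A_\lambda u_\lambda$, $u_\lambda(0)=f$, has a unique global Lipschitz solution, and from the gradient structure one gets the energy identity $\tfrac{\d}{\d t}J_\lambda(u_\lambda(t))=-\norm{u_\lambda'(t)}^2$ together with the monotonicity of $t\mapsto\norm{u_\lambda'(t)}$ (compare the two solutions $u_\lambda(\cdot)$ and $u_\lambda(\cdot+h)$).

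The work is in the passage $\lambda\to0$. Combining the monotonicity of the $A_\lambda$'s with the resolvent identities yields the Brezis a priori estimate $\tfrac{\d}{\d t}\tfrac12\norm{u_\lambda(t)-u_\mu(t)}^2\le(\lambda+\mu)\norm{A_\lambda u_\lambda(t)}\,\norm{A_\mu u_\mu(t)}$, while the \emph{subdifferential} structure $A=\partial J$ gives the regularizing bound
\begin{align*}
\norm{u_\lambda'(t)}=\norm{A_\lambda u_\lambda(t)}\le\frac1t\,\norm{f-v}+\norm{\partial^0 J(v)},\qquad\forall\,v\in\dom(\partial J),
\end{align*}
obtained by testing the flow against $v$, using the subgradient inequality for $J_\lambda$ and the monotonicity of $t\mapsto\norm{u_\lambda'(t)}$. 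These two estimates show that $\{u_\lambda\}$ is Cauchy in $C([\delta,\infty);\H)$ for every $\delta>0$; denote its limit by $u$, which then satisfies $u(0)=f$ and is Lipschitz on each $[\delta,\infty)$. Passing to the limit in $u_\lambda'=-A_\lambda u_\lambda$, using $R_\lambda u_\lambda(t)\to u(t)$ and the demiclosedness (strong--weak closedness) of the graph of the maximal monotone operator $A$, gives that $u(t)\in\dom(\partial J)$ for every $t>0$ and $-\partial_t u(t)\in\partial J(u(t))$ for a.e.\ $t>0$.

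It remains to upgrade this to the pointwise assertions. For $t>0$ the point $u(t)$ lies in $\dom(\partial J)$, so the right derivative $\partial_t^+u(t)$ exists, and differentiating $h\mapsto\tfrac1h(u(t+h)-u(t))$ while using that $\partial J(u(t))$ is convex and closed identifies $\partial_t^+u(t)=-\partial^0 J(u(t))=:\sg(t)$; the chain rule for convex functionals along the Lipschitz curve $u$ then gives $\tfrac{\d^+}{\d t}J(u(t))=\langle\sg(t),\partial_t^+u(t)\rangle=-\norm{\sg(t)}^2$, which is \eqref{eq:derivative_J}, while convexity of $J$ along the flow together with $\norm{\sg(t)}\le\norm{\sg(\delta)}$ for $t\ge\delta$ yields that $t\mapsto J(u(t))$ is convex, non-increasing and Lipschitz on $[\delta,\infty)$. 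The \textbf{main obstacle} is precisely the regularizing estimate above for $\norm{A_\lambda u_\lambda(t)}$ — it is false for a generic maximal monotone operator and genuinely uses that $A=\partial J$ with $J$ convex — together with the identification of the limit of $A_\lambda u_\lambda$ as a true subgradient, for which one relies on the demiclosedness of the graph of $A$.
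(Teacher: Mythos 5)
The paper gives no proof of this theorem: it is quoted as a classical result and attributed to Brezis' monograph on maximal monotone operators, so there is nothing internal to compare against. Your sketch is a correct outline of precisely the argument in that reference (uniqueness by monotonicity, existence via the Yosida/Moreau regularization and the Cauchy estimate in $\lambda,\mu$, the regularizing bound $\norm{A_\lambda u_\lambda(t)}\le\frac1t\norm{f-v}+\norm{\partial^0 J(v)}$ which is the step genuinely requiring the subdifferential structure, demiclosedness to identify the limit, and the chain rule plus the monotonicity of $t\mapsto\norm{\sg(t)}$ to obtain \eqref{eq:derivative_J} and the convexity of $t\mapsto J(u(t))$), so it is consistent with how the paper intends the theorem to be justified.
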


\begin{proposition}[Conservation of mass]\label{prop:conserv_mass}
Let $u(t)$ solve the gradient flow \eqref{gradflow} with data $f$ and let $\overline{\cdot}:\H\to\calN(J)$ denote the orthogonal projection onto $\calN(J)$. Then it holds $\overline{u(t)}=\overline{f}$.
\end{proposition}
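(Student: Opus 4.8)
The plan is to exploit the fact that the velocity field of the gradient flow takes values in $\calN(J)^\perp$, so that the $\calN(J)$-component of $u(t)$ cannot move. I would first record the two ingredients. From Theorem~\ref{thm:brezis} we have $\partial_t^+ u(t)=-\sg(t)$ with $\sg(t)=\partial^0 J(u(t))\in\partial J(u(t))$ for every $t>0$, and from Proposition~\ref{prop:orth} (which applies since $J$ is convex, lower semi-continuous and $p$-homogeneous) we get $\partial J(u(t))\subset\calN(J)^\perp$, hence $\sg(t)\in\calN(J)^\perp$ for all $t>0$. Moreover, since $J$ is lower semi-continuous, Proposition~\ref{prop:null_space} ensures that $\calN(J)$ is a closed subspace, so the orthogonal projection $\overline{\cdot}$ is a well-defined bounded linear operator with $\overline{\sg(t)}=0$ for all $t>0$.

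Second, I would use that $\overline{\cdot}$, being linear and bounded, commutes with the time derivative. For any $0<\delta\le t$ the curve $u$ is Lipschitz, hence absolutely continuous, on $[\delta,t]$ by Theorem~\ref{thm:brezis}, so $u(t)-u(\delta)=\int_\delta^t \partial_t^+ u(s)\,\d s=-\int_\delta^t \sg(s)\,\d s$ as a Bochner integral. Applying $\overline{\cdot}$ and pulling it inside the integral gives $\overline{u(t)}-\overline{u(\delta)}=-\int_\delta^t \overline{\sg(s)}\,\d s=0$; equivalently, one may note that $\frac{\d}{\d t}\overline{u(t)}=\overline{\partial_t u(t)}=-\overline{\sg(t)}=0$ for almost every $t>0$. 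Either way, $t\mapsto\overline{u(t)}$ is constant on $(0,\infty)$.

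Finally, since $u:[0,\infty)\to\H$ is continuous at $t=0$ and $\overline{\cdot}$ is continuous, letting $\delta\searrow 0$ yields $\overline{u(t)}=\overline{u(0)}=\overline{f}$ for all $t\ge 0$. There is essentially no obstacle here; the only point that deserves attention is the passage from ``derivative zero almost everywhere'' to ``constant'', which is licensed precisely by the local Lipschitz (absolute continuity) regularity of $u$ away from the origin provided by Brezis' theorem, together with the treatment of $t=0$ by continuity rather than differentiability.
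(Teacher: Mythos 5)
Your proof is correct and takes essentially the same route as the paper's: write $u(t)-f$ as the Bochner integral of $-\sg(s)$, apply the (linear, bounded) projection onto $\calN(J)$, and use Proposition~\ref{prop:orth} to conclude $\overline{\sg(s)}=0$. Your extra care in integrating from $\delta>0$ and handling $t=0$ by continuity is a minor refinement of the paper's direct integration from $0$, not a different argument.
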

\begin{proof}
It holds
$$u(t)-f=-\int_0^t\sg(s)\d s$$
and from Proposition~\ref{prop:orth} we deduce $\overline{\sg(s)}=0$ for all $s>0$. Hence, also $\overline{u(t)-f}=0$ which implies the statement due to linearity of the projection.
\qed 
\end{proof}

\begin{proposition}\label{prop:wlog_orth_data}
Let $u(t)$ solve \eqref{gradflow} with data $f-\overline{f}\in\calN(J)^\perp$. Then $v(t):=u(t)+\overline{f}$ solves \eqref{gradflow} with data $f$.
\end{proposition}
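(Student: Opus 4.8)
The plan is to check directly that the shifted curve $v(t):=u(t)+\overline{f}$ satisfies the two defining properties of a solution of \eqref{gradflow} with datum $f$ --- the initial condition and the differential inclusion --- and then to appeal to the uniqueness part of Theorem~\ref{thm:brezis} to conclude that $v$ is indeed \emph{the} solution.

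First I would verify the initial value. Since $u$ solves \eqref{gradflow} with datum $f-\overline{f}$, we have $v(0)=u(0)+\overline{f}=(f-\overline{f})+\overline{f}=f$. Next, since $\overline{f}\in\calN(J)$ does not depend on time, the (right-)derivative is unchanged, i.e.\ $\partial_t v(t)=\partial_t u(t)$. The key step is to apply Proposition~\ref{prop:subdiff_nullspace}, which guarantees that adding a null-space element does not alter the subdifferential: $\partial J(v(t))=\partial J(u(t)+\overline{f})=\partial J(u(t))$ for every $t>0$. Combining these observations,
\[
\partial_t v(t)+\partial J(v(t))=\partial_t u(t)+\partial J(u(t))\ni 0,
\]
so $v$ solves the differential inclusion; moreover, since $\partial J(v(t))$ and $\partial J(u(t))$ agree as sets, their minimal-norm elements coincide, so $v$ has exactly the regularity asserted in Theorem~\ref{thm:brezis} and $\partial_t^+v(t)=-\partial^0 J(v(t))$. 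Uniqueness then identifies $v$ with the solution of \eqref{gradflow} for datum $f$.

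I do not expect any genuine obstacle: the statement is a direct corollary of the null-space invariance of $J$ and $\partial J$ from Proposition~\ref{prop:subdiff_nullspace} together with well-posedness. The only point meriting a line of care is that Brezis' solution concept singles out the minimal-norm selection $\partial^0 J$; but because the full subdifferentials coincide along the curve, so do their minimal-norm elements, and the argument goes through without modification.
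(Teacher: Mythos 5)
Your argument is correct and is essentially the paper's own proof: both reduce the claim to checking that $-\partial_t v(t)=-\partial_t u(t)\in\partial J(v(t))$, which follows from the null-space invariance of $\partial J$ in Proposition~\ref{prop:subdiff_nullspace}. Your additional remarks on the initial condition and the coincidence of the minimal-norm selections are fine but add nothing beyond the paper's one-line verification.
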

\begin{proof}
The proof reduces to checking whether $-\partial_t v(t)=-\partial_t u(t)\in\partial J(v(t))$, which is true due to Proposition~\ref{prop:subdiff_nullspace}.
\qed 
\end{proof}

\begin{proposition}\label{prop:properties_GF}
Let $u(t)$ denote the solution of the gradient flow \eqref{gradflow} corresponding to the absolutely $p$-homogeneous functional $J$ and let $f\in\calN(J)^\perp$. Then it holds
\begin{alignat}{2}
\label{conv:u-to-0}
u(t)\to 0,\quad&J(u(t))\to 0,\quad&&t\to\infty,\\
\label{eq:diss_u}
\frac{\d }{\d t}\frac{1}{2}\norm{u(t)}^2&=-pJ(u(t)),\quad&&t>0.
\end{alignat}
\end{proposition}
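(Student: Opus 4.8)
The plan is to establish the dissipation identity \eqref{eq:diss_u} first, and then to read off both convergences in \eqref{conv:u-to-0} from it. By Theorem~\ref{thm:brezis} the orbit $u$ is right-differentiable on $(0,\infty)$ with $-\partial_t u(t)=\sg(t)\in\partial J(u(t))$ (the minimal-norm subgradient), and it is Lipschitz continuous on every interval $[\delta,\infty)$, $\delta>0$. Hence $t\mapsto\frac{1}{2}\norm{u(t)}^2$ is locally Lipschitz on $(0,\infty)$ and, since $\frac{1}{2h}\big(\norm{u(t+h)}^2-\norm{u(t)}^2\big)=\big\langle\tfrac1h(u(t+h)-u(t)),\tfrac12(u(t+h)+u(t))\big\rangle\to\langle\partial_t u(t),u(t)\rangle$ as $h\to 0^+$, its right-derivative equals $\langle\partial_t u(t),u(t)\rangle=-\langle\sg(t),u(t)\rangle$. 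Now I invoke the generalized Euler identity \eqref{eq:euler}, which gives $\langle\sg(t),u(t)\rangle=pJ(u(t))$ because $\sg(t)\in\partial J(u(t))$; this is exactly \eqref{eq:diss_u}. Since $t\mapsto J(u(t))$ is continuous by Theorem~\ref{thm:brezis}, the right-derivative above is continuous, so $\frac{1}{2}\norm{u(t)}^2$ is in fact $C^1$ on $(0,\infty)$ and \eqref{eq:diss_u} holds as a classical derivative for all $t>0$, in accordance with the convention adopted after Theorem~\ref{thm:brezis}.

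Next I would deduce $J(u(t))\to 0$. From \eqref{eq:diss_u} and $J\geq 0$ the map $t\mapsto\norm{u(t)}$ is non-increasing, hence bounded; integrating \eqref{eq:diss_u} over $[\delta,t]$ for a fixed $\delta>0$ yields $\norm{u(t)}^2+2p\int_\delta^t J(u(s))\,\d s=\norm{u(\delta)}^2$, so $\int_\delta^\infty J(u(s))\,\d s\leq\frac{1}{2p}\norm{u(\delta)}^2<\infty$. Since $s\mapsto J(u(s))$ is non-negative and non-increasing (Theorem~\ref{thm:brezis}), an integrable function with these two properties necessarily tends to $0$; this is the second claim in \eqref{conv:u-to-0}.

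Finally, for the strong convergence $u(t)\to 0$ I would invoke the coercivity \eqref{ineq:coercivity}: from $\norm{u(t)}^p\leq C\,J(u(t))$ and $J(u(t))\to 0$ it follows that $\norm{u(t)}\to 0$. If one wishes to avoid the coercivity assumption at this point, the same conclusion can still be reached from general principles: $u(t)-f=-\int_0^t\sg(s)\,\d s$ with $\sg(s)\in\partial J(u(s))\subset\calN(J)^\perp$ by Proposition~\ref{prop:orth}, and $f\in\calN(J)^\perp$, so $u(t)\in\calN(J)^\perp$ for all $t$; as $u(t)$ is bounded, any weak subsequential limit $v$ as $t\to\infty$ satisfies $J(v)\leq\liminf_k J(u(t_k))=0$ by weak lower semicontinuity of $J$, hence $v\in\calN(J)\cap\calN(J)^\perp=\{0\}$, and therefore $u(t)\rightharpoonup 0$; strong convergence of the orbit then follows from Bruck's asymptotics theorem, using that $\partial J$ is an odd maximal monotone operator since $J$ is even by its $p$-homogeneity, and the strong limit must coincide with the weak limit, namely $0$.

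The differentiation calculus in the first step and the elementary integration in the second are routine; the only genuinely delicate point is the last one, upgrading weak to strong convergence of $u(t)$ — immediate under coercivity but, without it, genuinely needing the oddness of $\partial J$ and Bruck's theorem, since for general convex functionals the gradient flow may converge only weakly.
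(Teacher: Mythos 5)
Your proof is correct. For the identity \eqref{eq:diss_u} you follow exactly the paper's route (chain rule for $\tfrac12\norm{u(t)}^2$ plus the Euler identity \eqref{eq:euler} applied to the minimal-norm subgradient $\sg(t)$), merely making the right-differentiability bookkeeping explicit. Where you genuinely diverge is in the two convergences, which the paper dispatches by citation: for $J(u(t))\to 0$ it points to Brezis, while you reconstruct the standard argument (integrate \eqref{eq:diss_u} to get $J(u(\cdot))\in L^1(\delta,\infty)$, then use monotonicity from Theorem~\ref{thm:brezis}); for $u(t)\to 0$ the paper cites \cite{bungert2019nonlinear}, whose argument rests on the orthogonality of Proposition~\ref{prop:orth}, whereas your primary route is the one-line deduction from the standing coercivity assumption \eqref{ineq:coercivity} (legitimate here, since $u(t)\in\calN(J)^\perp$ by conservation of the null-space component and the claim is trivial once $u(t)=0$). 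Your coercivity-free alternative --- weak subsequential limits lie in $\calN(J)\cap\calN(J)^\perp=\{0\}$ by weak lower semicontinuity, then strong convergence via Bruck's theorem because the $p$-homogeneity makes $J$ even and $\partial J$ odd --- is sound and is in fact closer in spirit to what the cited reference does; it has the advantage of showing that \eqref{conv:u-to-0} does not depend on \eqref{ineq:coercivity} at all, which matters since this proposition sits in the appendix among the general structural facts. The coercivity route buys brevity; the Bruck route buys generality.
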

\begin{proof}
The proof for $u(t)\to 0$ can be found in \cite{bungert2019nonlinear} and mainly relies on Proposition~\ref{prop:orth}. The statement $J(u(t))\to 0$ is classical \cite{brezis1973ope}. For \eqref{eq:diss_u} one uses the chain rule together with \eqref{eq:euler} to obtain
$$\frac{\d}{\d t}\frac{1}{2}\norm{u(t)}^2=\left\langle\partial_tu(t),u(t)\right\rangle=-\langle\sg(t),u(t)\rangle=-pJ(u(t)).$$
\qed 
\end{proof}
\end{appendix}

\end{document}